\documentclass[11pt]{article}

\usepackage[utf8]{inputenc}
\usepackage[dvips,letterpaper,margin=1in]{geometry}
\usepackage{sty}
\usepackage{color}

\definecolor{AfonsoBlue}{RGB}{30,65,123}

\title{Notes on Computational Hardness of Hypothesis Testing: Predictions using the Low-Degree Likelihood Ratio}

\makeatletter
\renewcommand*{\@fnsymbol}[1]{\ensuremath{\ifcase#1\or *\or \ddagger\or
    \mathsection\or \mathparagraph\or \|\or **\or \dagger\dagger
    \or \ddagger\ddagger \else\@ctrerr\fi}}
\makeatother

\usepackage{authblk}
\author[1]{Dmitriy Kunisky\thanks{Email: \textit{kunisky@cims.nyu.edu}. Partially supported by NSF grants DMS-1712730 and DMS-1719545.}}
\author[1]{Alexander S.\ Wein\thanks{Email: \textit{awein@cims.nyu.edu}. Partially supported by NSF grant DMS-1712730 and by the Simons Collaboration on Algorithms and Geometry.}}
\author[1,2]{Afonso S.\ Bandeira\thanks{Email: \textit{bandeira@cims.nyu.edu}. Partially supported by NSF grants DMS-1712730 and DMS-1719545, and by a grant from the Sloan
Foundation.}}

\affil[1]{Department of Mathematics, Courant Institute of Mathematical Sciences, New York University}
\affil[2]{Center for Data Science, New York University}

\date{July 26, 2019}

\begin{document}

\maketitle

\begin{abstract}

These notes survey and explore an emerging method, which we call the \emph{low-degree method}, for predicting and understanding statistical-versus-computational tradeoffs in high-dimensional inference problems. In short, the method posits that a certain quantity -- the second moment of the \emph{low-degree likelihood ratio} -- gives insight into how much computational time is required to solve a given hypothesis testing problem, which can in turn be used to predict the computational hardness of a variety of statistical inference tasks. While this method originated in the study of the sum-of-squares (SoS) hierarchy of convex programs, we present a self-contained introduction that does not require knowledge of SoS. In addition to showing how to carry out predictions using the method, we include a discussion investigating both rigorous and conjectural consequences of these predictions.

These notes include some new results, simplified proofs, and refined conjectures. For instance, we point out a formal connection between spectral methods and the low-degree likelihood ratio, and we give a sharp low-degree lower bound against subexponential-time algorithms for tensor PCA.
\end{abstract}

\newpage

\setcounter{tocdepth}{2}
\tableofcontents

\newpage

\section*{Overview}
\addcontentsline{toc}{section}{Overview}

Many problems in high-dimensional statistics are believed to exhibit gaps between what can be achieved \emph{information-theoretically} (or \emph{statistically}, i.e., with unbounded computational power) and what is possible with bounded computational power (e.g., in polynomial time). Examples include finding planted cliques \cite{J-clique,DM-clique,MPW-clique,pcal} or dense communities \cite{block-model-1,block-model-2,HS-bayesian} in random graphs, extracting variously structured principal components of random matrices \cite{BR-sparse,LKZ-mmse,LKZ-sparse} or tensors \cite{HSS-tensor,sos-hidden}, and solving or refuting random constraint satisfaction problems \cite{alg-barriers,refuting-any-csp}.

Although current techniques cannot prove that such average-case problems require super-polynomial time (even assuming $P \ne NP$), various forms of rigorous evidence for hardness have been proposed. These include:
\begin{itemize}
    \item failure of Markov chain Monte Carlo methods \cite{J-clique,DFJ-mcmc};
    \item failure of local algorithms \cite{GS-local,DM-hidden,tensor-local,subopt-local-maxcut};
    \item methods from statistical physics which suggest failure of belief propagation or approximate message passing algorithms \cite{block-model-1,block-model-2,LKZ-mmse, LKZ-sparse} (see \cite{stat-phys-survey} for a survey or \cite{BPW-phys-notes} for expository notes);
    \item structural properties of the solution space \cite{alg-barriers,KMRSZ,GS-local,GZ-reg,GZ-clique};
    \item geometric analysis of non-convex optimization landscapes \cite{ABC,matrix-tensor};
    \item reductions from planted clique (which has become a ``canonical'' problem believed to be hard in the average case) \cite{BR-sparse,HWX-reduction,WBS,hard-rip,bresler-sparse,bresler-pca};
    \item lower bounds in the statistical query model \cite{sq-kearns,sq-half,sq-clique,sq-sat,sq-gaussian,sq-robust};
    \item lower bounds against the sum-of-squares hierarchy \cite{grig-parity,sch-parity,DM-clique,MPW-clique,HSS-tensor,MW-sos,pcal,sos-hidden} (see \cite{sos-survey} for a survey).
\end{itemize} 

\noindent In these notes, we survey another emerging method, which we call the \emph{low-degree method}, for understanding computational hardness in average-case problems. 
In short, we explore a conjecture that the behavior of a certain quantity -- the second moment of the \emph{low-degree likelihood ratio} -- reveals the computational complexity of a given statistical task. 
We find the low-degree method particularly appealing because it is simple, widely applicable, and can be used to study a wide range of time complexities (e.g., polynomial, quasipolynomial, or nearly-exponential).
Furthermore, rather than simply positing a certain ``optimal algorithm,'' the underlying conjecture captures an interpretable structural feature that seems to dictate whether a problem is easy or hard. 
Finally, and perhaps most importantly, predictions using the low-degree method have been carried out for a variety of average-case problems, and so far have always reproduced widely-believed results.

Historically, the low-degree method arose from the study of the sum-of-squares (SoS) semidefinite programming hierarchy. In particular, the method is implicit in the \emph{pseudo-calibration} approach to proving SoS lower bounds \cite{pcal}. Two concurrent papers \cite{HS-bayesian,sos-hidden} later articulated the idea more explicitly. In particular, Hopkins and Steurer \cite{HS-bayesian} were the first to demonstrate that the method can capture sharp thresholds of computational feasibility such as the Kesten--Stigum threshold for community detection in the stochastic block model. The low-degree method was developed further in the PhD thesis of Hopkins \cite{sam-thesis}, which includes a precise conjecture about the complexity-theoretic implications of low-degree predictions. In comparison to sum-of-squares lower bounds, the low-degree method is much simpler to carry out and appears to always yield the same results for natural average-case problems.

In these notes, we aim to provide a self-contained introduction to the low-degree method; we largely avoid reference to SoS and instead motivate the method in other ways. We will briefly discuss the connection to SoS in Section~\ref{sec:sos}, but we refer the reader to \cite{sam-thesis} for an in-depth exposition of these connections.

These notes are organized as follows. In Section~\ref{sec:decisiontheory}, we present the low-degree method and motivate it as a computationally-bounded analogue of classical statistical decision theory. In Section~\ref{sec:agn}, we show how to carry out the low-degree method for a general class of additive Gaussian noise models. In Section~\ref{sec:examples}, we specialize this analysis to two classical problems: the spiked Wigner matrix and spiked Gaussian tensor models. Finally, in Section~\ref{sec:ldlr-conj-2}, we discuss various forms of heuristic and formal evidence for correctness of the low-degree method; in particular, we highlight a formal connection between low-degree lower bounds and the failure of spectral methods (Theorem~\ref{thm:spectral-hard}).

\section{Towards a Computationally-Bounded Decision Theory}\label{sec:decisiontheory}

\subsection{Statistical-to-Computational Gaps in Hypothesis Testing}

The field of \emph{statistical decision theory} (see, e.g., \cite{LR-sdt,LeCam-sdt} for general references) is concerned with the question of how to decide optimally (in some quantitative sense) between several statistical conclusions.
The simplest example, and the one we will mainly be concerned with here, is that of \emph{simple hypothesis testing}: we observe a dataset that we believe was drawn from one of two probability distributions, and want to make an inference (by performing a statistical \emph{test}) about which distribution we think the dataset was drawn from.

However, one important practical aspect of statistical testing usually is not included in this framework, namely the \emph{computational cost} of actually performing a statistical test.
In these notes, we will explore ideas from a line of recent research about how one mathematical method of classical decision theory might be adapted to predict the capabilities and limitations of \emph{computationally bounded statistical tests}.

The basic problem that will motivate us is the following.
Suppose $\PPP = (\PP_n)_{n \in \NN}$ and $\QQQ = (\QQ_n)_{n \in \NN}$ are two sequences of probability distributions over a common sequence of measurable spaces $\sS = ((\sS_n, \sF_n))_{n \in \NN}$.
(In statistical parlance, we will think throughout of $\PPP$ as the model of the \emph{alternative hypothesis} and $\QQQ$ as the model of the \emph{null hypothesis}. Later on, we will consider hypothesis testing problems where the distributions $\PPP$ include a ``planted'' structure, making the notation a helpful mnemonic.)
Suppose we observe $\bY \in \sS_n$ which is drawn from one of $\PP_n$ or $\QQ_n$.
We hope to recover this choice of distribution in the following sense.

\begin{definition}
    We say that a sequence of events $(A_n)_{n \in \NN}$ with $A_n \in \sF_n$ occurs with \emph{high probability (in $n$)} if the probability of $A_n$ tends to 1 as $n \to \infty$.
\end{definition}

\begin{definition}\label{def:stat-ind}
    A sequence of (measurable) functions $f_n: \sS_n \to \{ \tp, \tq \}$ is said to \emph{strongly distinguish}\footnote{We will only consider this so-called \emph{strong} version of distinguishability, where the probability of success must tend to 1 as $n \to \infty$, as opposed to the \emph{weak} version where this probability need only be bounded above $\frac{1}{2}$. For high-dimensional problems, the strong version typically coincides with important notions of estimating the planted signal (see Section~\ref{sec:extensions}), whereas the weak version is often trivial.} $\PPP$ and $\QQQ$ if $f_n(\bY) = \tp$ with high probability when $\bY \sim \PP_n$, and $f_n(\bY) = \tq$ with high probability when $\bY \sim \QQ_n$. If such $f_n$ exist, we say that $\PPP$ and $\QQQ$ are \emph{statistically distinguishable}.
\end{definition}

In our computationally bounded analogue of this definition, let us for now only consider polynomial time tests (we will later consider various other restrictions on the time complexity of $f_n$, such as subexponential time).
Then, the analogue of Definition~\ref{def:stat-ind} is the following.

\begin{definition}
    $\PPP$ and $\QQQ$ are said to be \emph{computationally distinguishable} if there exists a sequence of measurable \textbf{and computable in time polynomial in $\bm n$} functions $f_n: \sS_n \to \{\tp, \tq\}$ such that $f_n$ strongly distinguishes $\PPP$ and $\QQQ$.
\end{definition}

\noindent Clearly, computational distinguishability implies statistical distinguishability.
On the other hand, a multitude of theoretical evidence suggests that statistical distinguishability does not in general imply computational distinguishability.
Occurrences of this phenomenon are called \emph{statistical-to-computational (stat-comp) gaps}.
Typically, such a gap arises in the following slightly more specific way. Suppose the sequence $\PPP$ has a further dependence on a \emph{signal-to-noise} parameter $\lambda > 0$, so that $\PPP_\lambda = (\PP_{\lambda, n})_{n \in \NN}$.
This parameter should describe, in some sense, the strength of the structure present under $\PPP$ (or, in some cases, the number of samples received). The following is one canonical example.

\begin{example}[Planted Clique Problem \cite{J-clique,kucera-clique}]
Under the null model $\QQ_n$, we observe an $n$-vertex Erd\H{o}s-R\'enyi graph $\sG(n,1/2)$, i.e., each pair $\{i, j\}$ of vertices is connected with an edge independently with probability $1/2$. The signal-to-noise parameter $\lambda$ is an integer $1 \le \lambda \le n$. Under the planted model $\PP_{\lambda,n}$, we first choose a random subset of vertices $S \subseteq [n]$ of size $|S| = \lambda$ uniformly at random. We then observe a graph where each pair $\{i, j\}$ of vertices is connected with probability $1$ if $\{i,j\} \subseteq S$ and with probability $1/2$ otherwise. In other words, the planted model consists of the union of $\sG(n,1/2)$ with a planted \emph{clique} (a fully-connected subgraph) on $\lambda$ vertices.
\end{example}

\noindent As $\lambda$ varies, the problem of testing between $\PPP_\lambda$ and $\QQQ$ can change from statistically impossible, to statistically possible but computationally hard, to computationally easy.
That is, there exists a threshold $\lambda_{\mathsf{stat}}$ such that for any $\lambda > \lambda_{\mathsf{stat}}$, $\PPP_{\lambda}$ and $\QQQ$ are statistically distinguishable, but for $\lambda < \lambda_{\mathsf{stat}}$ are not.
There also exists a threshold $\lambda_{\mathsf{comp}}$ such that for any $\lambda > \lambda_{\mathsf{comp}}$, $\PPP_\lambda$ and $\QQQ$ are computationally distinguishable, and (conjecturally) for $\lambda < \lambda_{\mathsf{comp}}$ are not.
Clearly we must have $\lambda_{\mathsf{comp}} \geq \lambda_{\mathsf{stat}}$, and a stat-comp gap corresponds to strict inequality $\lambda_{\mathsf{comp}} > \lambda_{\mathsf{stat}}$. For instance, the two models in the planted clique problem are statistically distinguishable when $\lambda \ge (2+\varepsilon) \log_2 n$ (since $2\log_2 n$ is the typical size of the largest clique in $\sG(n,1/2)$), so $\lambda_{\mathsf{stat}} = 2 \log_2 n$. However, the best known polynomial-time distinguishing algorithms only succeed when $\lambda = \Omega(\sqrt{n})$ \cite{kucera-clique,AKS-clique}, and so (conjecturally) $\lambda_{\mathsf{comp}} \approx \sqrt{n}$, a large stat-comp gap.

The remarkable method we discuss in these notes allows us, through a relatively straightforward calculation, to predict the threshold $\lambda_{\mathsf{comp}}$ for many of the known instances of stat-comp gaps.
We will present this method as a modification of a classical second moment method for studying $\lambda_{\mathsf{stat}}$.

\subsection{Classical Asymptotic Decision Theory}

In this section, we review some basic tools available from statistics for understanding statistical distinguishability.
We retain the same notations from the previous section in the later parts, but in the first part of the discussion will only be concerned with a single pair of distributions $\PP$ and $\QQ$ defined on a single measurable space $(\sS, \sF)$.
For the sake of simplicity, let us assume in either case that $\PP_n$ (or $\PP$) is absolutely continuous with respect to $\QQ_n$ (or $\QQ$, as appropriate).\footnote{For instance, what will be relevant in the examples we consider later, any pair of non-degenerate multivariate Gaussian distributions satisfy this assumption.}

\subsubsection{Basic Notions}

We first define the basic objects used to make hypothesis testing decisions, and some ways of measuring their quality.

\begin{definition}
    A \emph{test} is a measurable function $f: \sS \to \{\tp, \tq\}$.
\end{definition}

\begin{definition}
    The \emph{type I error} of $f$ is the event of falsely rejecting the null hypothesis, i.e., of having $f(\bY) = \tp$ when $\bY \sim \QQ$.
    The \emph{type II error} of $f$ is the event of falsely failing to reject the null hypothesis, i.e., of having $f(\bY) = \tq$ when $\bY \sim \PP$.
    The probabilities of these errors are denoted
    \begin{align*}
        \alpha(f) &\colonequals \QQ\left(f(\bY) = \tp\right), \\
        \beta(f) &\colonequals \PP\left(f(\bY) = \tq\right).
    \end{align*}
    The probability $1 - \beta(f)$ of correctly rejecting the null hypothesis is called the \emph{power} of $f$.
\end{definition}
\noindent
There is a tradeoff between type I and type II errors.
For instance, the trivial test that always outputs $\tp$ will have maximal power, but will also have maximal probability of type I error, and vice-versa for the trivial test that always outputs $\tq$.
Thus, typically one fixes a tolerance for one type of error, and then attempts to design a test that minimizes the probability of the other type.

\subsubsection{Likelihood Ratio Testing}

We next present the classical result showing that it is in fact possible to identify the test that is optimal in the sense of the above tradeoff.\footnote{It is important to note that, from the point of view of statistics, we are restricting our attention to the special case of deciding between two ``simple'' hypotheses, where each hypothesis consists of the dataset being drawn from a specific distribution. Optimal testing is more subtle for ``composite'' hypotheses in parametric families of probability distributions, a more typical setting in practice. The mathematical difficulties of this extended setting are discussed thoroughly in \cite{LR-sdt}.}

\begin{definition}
    Let $\PP$ be absolutely continuous with respect to $\QQ$. The \emph{likelihood ratio}\footnote{For readers not familiar with the Radon--Nikodym derivative: if $\PP$, $\QQ$ are discrete distributions then $L(\bY) = \PP(\bY)/\QQ(\bY)$; if $\PP$, $\QQ$ are continuous distributions with density functions $p$, $q$ (respectively) then $L(\bY) = p(\bY)/q(\bY)$.} of $\PP$ and $\QQ$ is
    \begin{equation*}
        L(\bY) \colonequals \frac{d\PP}{d\QQ}(\bY).
    \end{equation*}
    The \emph{thresholded likelihood ratio test} with threshold $\eta$ is the test
    \begin{equation*}
        L_{\eta}(\bY) \colonequals \left\{\begin{array}{lcl} \tp & : & L(\bY) > \eta \\ \tq & : & L(\bY) \leq \eta \end{array}\right\}.
    \end{equation*}
\end{definition}

\noindent Let us first present a heuristic argument for why thresholding the likelihood ratio might be a good idea. Specifically, we will show that the likelihood ratio is optimal in a particular ``$L^2$ sense'' (which will be of central importance later), i.e., when its quality is measured in terms of first and second moments of a testing quantity.

\begin{definition}\label{def:hilbert}
    For (measurable) functions $f,g: \sS \to \RR$, define the inner product and norm induced by $\QQ$:
    \begin{align*}
        \la f, g \ra &\colonequals \Ex_{\bY \sim \QQ}\left[ f(\bY) g(\bY) \right], \\
        \|f\| &\colonequals \sqrt{\langle f,f \rangle}.
    \end{align*}
    Let $L^2(\QQ)$ denote the Hilbert space consisting of functions $f$ for which $\|f\| < \infty$, endowed with the above inner product and norm.\footnote{For a more precise definition of $L^2(\QQ_n)$ (in particular including issues around functions differing on sets of measure zero) see a standard reference on real analysis such as \cite{SS-real}.}
\end{definition}

\begin{proposition}
    \label{prop:lr-optimal-l2}
    If $\PP$ is absolutely continuous with respect to $\QQ$, then the unique solution $f^*$ of the optimization problem
    \begin{equation*}
    \begin{array}{ll}
        \text{maximize} & \displaystyle \Ex_{\bY \sim \PP} [f(\bY)] \\[10pt]
        \text{subject to} & \displaystyle \Ex_{\bY \sim \QQ} [f(\bY)^2] = 1
    \end{array}
    \end{equation*}
    is the (normalized) likelihood ratio
    \[
        f^\star = L/\|L\|,
    \]
    and the value of the optimization problem is $\|L\|$.
\end{proposition}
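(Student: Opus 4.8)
The plan is to reduce the whole optimization to a single application of the Cauchy--Schwarz inequality in the Hilbert space $L^2(\QQ)$. The first step is a change of measure: using that $\PP$ is absolutely continuous with respect to $\QQ$ and the definition of the Radon--Nikodym derivative $L = d\PP/d\QQ$, one has, for any measurable $f$, the identity $\Ex_{\bY \sim \PP}[f(\bY)] = \Ex_{\bY \sim \QQ}[L(\bY) f(\bY)] = \la L, f \ra$. Thus the objective is exactly the $L^2(\QQ)$-inner product of $f$ with $L$, and the constraint is that $f$ lie on the unit sphere of $L^2(\QQ)$. (Implicitly one restricts to $f \in L^2(\QQ)$ so that the constraint makes sense; we may also assume $\|L\| < \infty$, as otherwise the supremum is $+\infty$ and is not attained — in the examples considered later this finiteness always holds. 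Note also $\|L\| \geq 1$ by Cauchy--Schwarz applied to $1 = \Ex_{\bY\sim\QQ}[L(\bY)]$, so in particular $\|L\| > 0$.)

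Given this reformulation, the upper bound is immediate: for any feasible $f$, i.e. with $\|f\| = 1$, Cauchy--Schwarz gives $\la L, f \ra \leq \|L\|\,\|f\| = \|L\|$, so the value of the problem is at most $\|L\|$. Next I would verify that $f^\star = L/\|L\|$ is feasible and attains this bound: indeed $\|f^\star\|^2 = \|L\|^2/\|L\|^2 = 1$, and $\la L, f^\star \ra = \|L\|^2/\|L\| = \|L\|$. Hence the value equals $\|L\|$ and $f^\star$ is a maximizer.

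Finally, for uniqueness one invokes the equality case of Cauchy--Schwarz: $\la L, f \ra = \|L\|\,\|f\|$ with $\|L\|>0$ forces $f$ to be a nonnegative scalar multiple of $L$ in $L^2(\QQ)$ (that is, $\QQ$-almost everywhere), and then the normalization $\|f\| = 1$ pins down the scalar as $1/\|L\|$, giving $f = L/\|L\|$.

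The only place that requires any care is the change-of-measure identity and the attendant integrability bookkeeping: one should observe that $\Ex_{\bY\sim\PP}[|f(\bY)|] = \Ex_{\bY\sim\QQ}[L(\bY)|f(\bY)|] \leq \|L\|\,\|f\| < \infty$ for feasible $f$, so the objective is well-defined and finite on the feasible set, and that "uniqueness" is understood in the $L^2(\QQ)$ sense (up to modification on $\QQ$-null sets). Beyond that, the argument is just the textbook Cauchy--Schwarz extremal computation.
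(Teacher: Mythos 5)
Your proposal is correct and follows essentially the same route as the paper's proof: rewrite the objective as $\la L, f\ra$ via the change of measure, rewrite the constraint as $\|f\|=1$, and apply Cauchy--Schwarz together with its equality case. The extra remarks on integrability and on the sign of the scalar in the equality case are sound and slightly more careful than the paper's one-line treatment.
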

\begin{proof}
    We may rewrite the objective as
    \[\Ex_{\bY \sim \PP} f(\bY) = \Ex_{\bY \sim \QQ} \left[L(\bY) f(\bY) \right] = \langle L,f \rangle,\]
    and rewrite the constraint as $\|f\| = 1$. The result now follows since $\langle L, f \rangle \leq \|L\| \cdot \|f\| = \|L\|$ by the Cauchy-Schwarz inequality, with equality if and only if $f$ is a scalar multiple of $L$.
\end{proof}
\noindent
In words, this means that if we want a function to be as large as possible in expectation under $\PP$ while remaining bounded (in the $L^2$ sense) under $\QQ$, we can do no better than the likelihood ratio.
We will soon return to this type of $L^2$ reasoning in order to devise computationally-bounded statistical tests.

The following classical result shows that the above heuristic is accurate, in that the thresholded likelihood ratio tests achieve the optimal tradeoff between type I and type II errors.
\begin{lemma}[Neyman--Pearson Lemma \cite{N-P}]
    \label{lem:neyman-pearson}
    Fix an arbitrary threshold $\eta \ge 0$. Among all tests $f$ with $\alpha(f) \leq \alpha(L_\eta) = \QQ(L(\bY) > \eta)$, $L_{\eta}$ is the test that maximizes the power $1 - \beta(f)$.
\end{lemma}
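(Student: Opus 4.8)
The plan is to reduce the statement to a single pointwise inequality between the ``rejection indicators'' of the two tests. First I would identify each test $f: \sS \to \{\tp,\tq\}$ with the function $\phi_f \colonequals \mathbf{1}[f(\bY) = \tp]$ taking values in $\{0,1\}$, so that $\alpha(f) = \Ex_{\bY \sim \QQ}[\phi_f(\bY)]$ and the power equals $1 - \beta(f) = \Ex_{\bY \sim \PP}[\phi_f(\bY)]$. Abbreviating $\phi \colonequals \phi_{L_\eta} = \mathbf{1}[L(\bY) > \eta]$ and writing $\psi \colonequals \phi_f$ for an arbitrary competing test with $\alpha(f) \le \alpha(L_\eta)$, the goal becomes to show $\Ex_{\bY \sim \PP}[\phi(\bY)] \ge \Ex_{\bY \sim \PP}[\psi(\bY)]$.

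The key step, and really the entire content of the lemma, is the pointwise inequality
\[ \big(\phi(\bY) - \psi(\bY)\big)\big(L(\bY) - \eta\big) \ge 0 \qquad \text{for every } \bY \in \sS. \]
I would verify this by cases: if $L(\bY) > \eta$ then $\phi(\bY) = 1 \ge \psi(\bY)$ while $L(\bY) - \eta > 0$; if $L(\bY) \le \eta$ then $\phi(\bY) = 0 \le \psi(\bY)$ while $L(\bY) - \eta \le 0$; in both cases the product of the two factors is $\ge 0$. Taking $\Ex_{\bY \sim \QQ}$ of this inequality, expanding, and applying the change-of-measure identity $\Ex_{\bY \sim \QQ}[g(\bY) L(\bY)] = \Ex_{\bY \sim \PP}[g(\bY)]$ with $g = \phi$ and then $g = \psi$ (legitimate because $\PP$ is absolutely continuous with respect to $\QQ$ and $g$ is bounded, so all expectations are finite), I obtain
\[ \Ex_{\bY \sim \PP}[\phi] - \Ex_{\bY \sim \PP}[\psi] \;\ge\; \eta\Big(\Ex_{\bY \sim \QQ}[\phi] - \Ex_{\bY \sim \QQ}[\psi]\Big) \;=\; \eta\big(\alpha(L_\eta) - \alpha(f)\big). \]
Since $\eta \ge 0$ and $\alpha(f) \le \alpha(L_\eta)$ by hypothesis, the right-hand side is nonnegative, so $1 - \beta(L_\eta) \ge 1 - \beta(f)$, which is exactly the claimed optimality of $L_\eta$.

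I do not expect any real obstacle: the argument is short once the pointwise inequality is spotted, and the only technical point is justifying the change of measure, which is immediate from the standing assumption that $\PP$ is absolutely continuous with respect to $\QQ$ together with the boundedness of the $\{0,1\}$-valued functions involved. If one wanted the stronger conclusion that $L_\eta$ is optimal essentially uniquely, one would additionally trace the equality case -- equality above forces $(\phi - \psi)(L - \eta) = 0$ almost surely under $\QQ$, hence $\psi$ agrees with $\phi$ off the set $\{L = \eta\}$ -- but since the statement only asserts optimality, this refinement can be omitted.
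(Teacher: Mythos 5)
Your proof is correct and is essentially the same argument as the one in Appendix~\ref{app:neyman-pearson}: the paper's decomposition of the difference of powers over $R_\star \setminus R_f$ and $R_f \setminus R_\star$ is exactly the unpacked form of your pointwise inequality $(\phi - \psi)(L - \eta) \ge 0$, followed by the same change of measure and the same use of $\eta \ge 0$ with $\alpha(f) \le \alpha(L_\eta)$.
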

\noindent 
We provide the standard proof of this result in Appendix~\ref{app:neyman-pearson} for completeness. (The proof is straightforward but not important for understanding the rest of these notes, and it can be skipped on a first reading.)

\subsubsection{Le Cam's Contiguity}
\label{sec:contig}

Since the likelihood ratio is, in the sense of the Neyman--Pearson lemma, an optimal statistical test, it stands to reason that it should be possible to argue about statistical distinguishability solely by computing with the likelihood ratio.
We present one simple method by which such arguments may be made, based on a theory introduced by Le~Cam \cite{lecam}.

We will work again with sequences of probability measures $\PPP = (\PP_n)_{n \in \NN}$ and $\QQQ = (\QQ_n)_{n \in \NN}$, and will denote by $L_n$ the likelihood ratio $d \PP_n/d \QQ_n$. Norms and inner products of functions are those of $L^2(\QQ_n)$.
The following is the crucial definition underlying the arguments to come.

\begin{definition}
    A sequence $\PPP$ of probability measures is \emph{contiguous} to a sequence $\QQQ$, written $\PPP \triangleleft \QQQ$, if whenever $A_n \in \sF_n$ with $\QQ_n(A_n) \to 0$ (as $n \to \infty$), then $\PP_n(A_n) \to 0$ as well.
\end{definition}

\begin{proposition}
    If $\PPP \triangleleft \QQQ$ or $\QQQ \triangleleft \PPP$, then $\QQQ$ and $\PPP$ are statistically indistinguishable (in the sense of Definition~\ref{def:stat-ind}, i.e., no test can have both type I and type II error probabilities tending to 0).
\end{proposition}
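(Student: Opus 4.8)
The plan is to argue by contradiction, unwinding the definition of contiguity against a hypothetical distinguishing sequence of tests. Suppose $\PPP$ and $\QQQ$ were statistically distinguishable, witnessed by tests $f_n \colon \sS_n \to \{\tp, \tq\}$ as in Definition~\ref{def:stat-ind}. Set $A_n \colonequals f_n^{-1}(\tq) \in \sF_n$, so that $A_n^c = f_n^{-1}(\tp)$. Unpacking the definition of strong distinguishing: since $f_n(\bY) = \tp$ with high probability when $\bY \sim \PP_n$, we have $\PP_n(A_n) \to 0$; since $f_n(\bY) = \tq$ with high probability when $\bY \sim \QQ_n$, we have $\QQ_n(A_n^c) \to 0$, equivalently $\QQ_n(A_n) \to 1$.

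Next I would split into the two cases. If $\QQQ \triangleleft \PPP$, apply the definition of contiguity to the events $A_n$: from $\PP_n(A_n) \to 0$ it follows that $\QQ_n(A_n) \to 0$, contradicting $\QQ_n(A_n) \to 1$. Symmetrically, if $\PPP \triangleleft \QQQ$, apply contiguity to the events $A_n^c$: from $\QQ_n(A_n^c) \to 0$ it follows that $\PP_n(A_n^c) \to 0$, i.e. $\PP_n(A_n) \to 1$, contradicting $\PP_n(A_n) \to 0$. Either way, no such sequence $f_n$ can exist, so $\PPP$ and $\QQQ$ are statistically indistinguishable.

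There is no real obstacle here — the statement is a direct consequence of the definitions — and the only point needing slight care is bookkeeping: one must feed the correct one of the two complementary events $A_n$, $A_n^c$ into the contiguity hypothesis in each case, so that the resulting ``tends to $0$'' conclusion collides with a ``tends to $1$'' fact already in hand. To streamline the write-up I would observe that the conclusion ``statistically indistinguishable'' is symmetric in $\PPP$ and $\QQQ$, so it suffices to treat a single case, say $\QQQ \triangleleft \PPP$, and then obtain the other by relabeling the two sequences.
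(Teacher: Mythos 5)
Your proof is correct and follows essentially the same argument as the paper: define the acceptance/rejection events of a hypothetical distinguishing test, feed the event whose probability vanishes into the contiguity hypothesis, and derive a contradiction with the complementary probability tending to $1$. The only cosmetic differences are your choice of labeling $A_n = f_n^{-1}(\tq)$ rather than $f_n^{-1}(\tp)$ and your writing out both cases explicitly where the paper handles one and appeals to symmetry.
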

\begin{proof}
    We give the proof for the case $\PPP \triangleleft \QQQ$, but the other case may be shown by a symmetric argument.
    For the sake of contradiction, let $(f_n)_{n \in \NN}$ be a sequence of tests distinguishing $\PPP$ and $\QQQ$, and let $A_n = \{\bY: f_n(\bY) = \tp\}$.
    Then, $\PP_n(A_n^c) \to 0$ and $\QQ_n(A_n) \to 0$.
    But, by contiguity, $\QQ_n(A_n) \to 0$ implies $\PP_n(A_n) \to 0$ as well, so $\PP_n(A_n^c) \to 1$, a contradiction.
\end{proof}
\noindent
It therefore suffices to establish contiguity in order to prove negative results about statistical distinguishability.
The following classical second moment method gives a means of establishing contiguity through a computation with the likelihood ratio.
\begin{lemma}[Second Moment Method for Contiguity]
    \label{lem:second-moment-contiguity}
    If $\|L_n\|^2 \colonequals \EE_{\bY \sim \QQ_n}[L_n(\bY)^2]$ remains bounded as $n \to \infty$ (i.e., $\limsup_{n \to \infty} \|L_n\|^2 < \infty$), then $\PPP \triangleleft \QQQ$.
\end{lemma}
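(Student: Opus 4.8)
The plan is to argue directly via the Cauchy--Schwarz inequality, exactly in the spirit of the $L^2$ optimality of the likelihood ratio established in Proposition~\ref{prop:lr-optimal-l2}. Fix an arbitrary sequence of events $A_n \in \sF_n$ with $\QQ_n(A_n) \to 0$; the goal is to show $\PP_n(A_n) \to 0$, and moreover that the rate of convergence can be controlled uniformly in terms of $\QQ_n(A_n)$ and $\|L_n\|$ alone.

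First I would invoke the standing assumption that $\PP_n$ is absolutely continuous with respect to $\QQ_n$, so that $L_n = d\PP_n/d\QQ_n$ exists and, for every event $A_n$, one can change measure from $\PP_n$ to $\QQ_n$ via
\[
    \PP_n(A_n) = \Ex_{\bY \sim \PP_n}\!\left[\mathbf{1}_{A_n}(\bY)\right] = \Ex_{\bY \sim \QQ_n}\!\left[L_n(\bY)\, \mathbf{1}_{A_n}(\bY)\right] = \langle L_n, \mathbf{1}_{A_n} \rangle,
\]
where the inner product is that of $L^2(\QQ_n)$. Then I would apply Cauchy--Schwarz in $L^2(\QQ_n)$, noting that $\|\mathbf{1}_{A_n}\| = \sqrt{\QQ_n(A_n)}$, to obtain
\[
    \PP_n(A_n) = \langle L_n, \mathbf{1}_{A_n} \rangle \leq \|L_n\| \cdot \|\mathbf{1}_{A_n}\| = \|L_n\|\,\sqrt{\QQ_n(A_n)}.
\]
Since $\limsup_{n \to \infty} \|L_n\|^2 < \infty$ by hypothesis and $\QQ_n(A_n) \to 0$, the right-hand side tends to $0$; hence $\PP_n(A_n) \to 0$. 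As $(A_n)$ was an arbitrary $\QQ_n$-null sequence, this is precisely the definition of $\PPP \triangleleft \QQQ$.

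There is essentially no hard step: the only point requiring care is the appeal to absolute continuity, which is what guarantees both the existence of the Radon--Nikodym derivative $L_n$ and the change-of-measure identity $\PP_n(A) = \int_A L_n \, d\QQ_n$ used above. It is worth emphasizing that the $L^2$-boundedness hypothesis is doing genuine work here: it is (by a de la Vall\'ee--Poussin-type argument) a uniform integrability condition on the family $\{L_n\}$ under $\QQ_n$, which is the classical criterion underlying contiguity. When it fails one must instead work with a suitably truncated likelihood ratio, but that refinement is not needed under the present assumption.
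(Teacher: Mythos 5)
Your proposal is correct and is essentially identical to the paper's own proof: both change measure to write $\PP_n(A_n) = \Ex_{\bY \sim \QQ_n}[L_n(\bY)\One_{A_n}(\bY)]$ and apply Cauchy--Schwarz to obtain the bound $\|L_n\|\sqrt{\QQ_n(A_n)}$. The additional remarks on absolute continuity and uniform integrability are fine context but not needed for the argument.
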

\begin{proof}
    Let $A_n \in \sF_n$.
    Then, using the Cauchy--Schwarz inequality,
    \begin{equation*}
        \PP_n(A_n) = \Ex_{\bY \sim \PP_n} [\One_{A_n}(\bY)] = \Ex_{\bY \sim \QQ_n}\left[ L_n(\bY) \One_{A_n}(\bY)\right] \leq \left(\Ex_{\bY \sim \QQ_n}[L_n(\bY)^2]\right)^{1/2} \left(\QQ_n(A_n)\right)^{1/2},
    \end{equation*}
    and so $\QQ_n(A_n) \to 0$ implies $\PP_n(A_n) \to 0$.
\end{proof}

\noindent This second moment method has been used to establish contiguity for various high-dimensional statistical problems (see e.g., \cite{MRZ-spectral,BMVVX-pca,PWBM-pca,PWB-tensor}). Typically the null hypothesis $\QQ_n$ is a ``simpler'' distribution than $\PP_n$ and, as a result, $d\PP_n/d\QQ_n$ is easier to compute than $d\QQ_n/d\PP_n$. In general, and essentially for this reason, establishing $\QQQ \triangleleft \PPP$ is often more difficult than $\PPP \triangleleft \QQQ$, requiring tools such as the \emph{small subgraph conditioning method} (introduced in \cite{subgraph-1,subgraph-2} and used in, e.g., \cite{MNS-rec,BMNN-community}). Fortunately, one-sided contiguity $\PP_n \triangleleft \QQ_n$ is sufficient for our purposes.

Note that $\|L_n\|$, the quantity that controls contiguity per the second moment method, is the same as the optimal value of the $L^2$ optimization problem in Proposition~\ref{prop:lr-optimal-l2}:
\begin{equation*}
    \left\{\begin{array}{rl}
        \text{maximize} & \EE_{\bY \sim \PP_n} [f(\bY)] \\[5pt]
        \text{subject to} & \EE_{\bY \sim \QQ_n} [f(\bY)^2] = 1
    \end{array}\right\} = \|L_n\|.
\end{equation*}
We might then be tempted to conjecture that $\PPP$ and $\QQQ$ are statistically distinguishable \emph{if and only if} $\|L_n\| \to \infty$ as $n \to \infty$. However, this is incorrect: there are cases when $\PPP$ and $\QQQ$ are not distinguishable, yet a rare ``bad'' event under $\PP_n$ causes $\|L_n\|$ to diverge.
To overcome this failure of the ordinary second moment method, some previous works (e.g., \cite{BMNN-community,BMVVX-pca,PWB-tensor,PWBM-pca}) have used \emph{conditional} second moment methods to show indistinguishability, where the second moment method is applied to a modified $\PPP$ that conditions on these bad events not occurring.

\subsection{Basics of the Low-Degree Method}
\label{sec:ldlr-conj}

We now describe the \emph{low-degree} analogues of the notions described in the previous section, which together constitute a method for restricting the classical decision-theoretic second moment analysis to computationally-bounded tests.
The premise of this \emph{low-degree method} is to take low-degree multivariate polynomials in the entries of the observation $\bY$ as a proxy for efficiently-computable functions. The ideas in this section were first developed in a sequence of works in the sum-of-squares optimization literature \cite{pcal,HS-bayesian,sos-hidden,sam-thesis}.

In the computationally-unbounded case, Proposition~\ref{prop:lr-optimal-l2} showed that the likelihood ratio optimally distinguishes $\PPP$ from $\QQQ$ in the $L^2$ sense.
Following the same heuristic, we will now find the low-degree polynomial that best distinguishes $\PPP$ from $\QQQ$ in the $L^2$ sense. In order for polynomials to be defined, we assume here that $\mathcal{S}_n \subseteq \RR^N$ for some $N = N(n)$, i.e., our data (drawn from $\PP_n$ or $\QQ_n$) is a real-valued vector (which may be structured as a matrix, tensor, etc.).

\begin{definition}\label{def:LDLR}
    Let $\sV^{\leq D}_n \subset L^2(\QQ_n)$ denote the linear subspace of polynomials $\sS_n \to \RR$ of degree at most $D$.
    Let $\sP^{\leq D}: L^2(\QQ_n) \to \sV^{\leq D}_n$ denote the orthogonal projection\footnote{To clarify, orthogonal projection is with respect to the inner product induced by $\QQ_n$ (see Definition~\ref{def:hilbert}).} operator to this subspace.
    Finally, define the \emph{$D$-low-degree likelihood ratio ($D$-LDLR)} as $L_n^{\leq D} \colonequals \sP^{\leq D} L_n$.
\end{definition}
\noindent We now have a low-degree analogue of Proposition~\ref{prop:lr-optimal-l2}, which first appeared in \cite{HS-bayesian,sos-hidden}.
\begin{proposition}
    The unique solution $f^*$ of the optimization problem
    \begin{equation}
    \label{eq:l2-opt-low}
    \begin{array}{ll}
        \text{maximize} & \displaystyle\Ex_{\bY \sim \PP_n} [f(\bY)] \\[10pt]
        \text{subject to} & \displaystyle\Ex_{\bY \sim \QQ_n} [f(\bY)^2] = 1, \\[10pt] & f \in \sV_n^{\leq D},
    \end{array}
    \end{equation}
    is the (normalized) $D$-LDLR
    \[
        f^\star = L^{\le D}_n/\|L^{\le D}_n\|,
    \]
    and the value of the optimization problem is $\|L_n^{\leq D}\|$.
\end{proposition}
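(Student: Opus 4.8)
The plan is to imitate the proof of Proposition~\ref{prop:lr-optimal-l2} verbatim, with the full Hilbert space $L^2(\QQ_n)$ replaced by the subspace $\sV_n^{\le D}$. First I would rewrite the objective as an inner product against $L_n$: since $\PP_n$ is absolutely continuous with respect to $\QQ_n$, for any $f \in \sV_n^{\le D}$,
\[
\Ex_{\bY \sim \PP_n}[f(\bY)] = \Ex_{\bY \sim \QQ_n}[L_n(\bY) f(\bY)] = \la L_n, f \ra.
\]
The only place where the low-degree restriction enters is the next step: because $f$ already lies in $\sV_n^{\le D}$ and $\sP^{\le D}$ is the \emph{orthogonal} projection onto that subspace (hence self-adjoint, with $\sP^{\le D} f = f$), we may replace $L_n$ by its projection,
\[
\la L_n, f \ra = \la L_n, \sP^{\le D} f \ra = \la \sP^{\le D} L_n, f \ra = \la L_n^{\le D}, f \ra.
\]
Equivalently, $L_n - L_n^{\le D} \perp \sV_n^{\le D}$, so only the component of $L_n$ inside the subspace contributes to the objective.

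Having reduced the problem to maximizing $\la L_n^{\le D}, f \ra$ over $f \in \sV_n^{\le D}$ with $\|f\| = 1$, I would finish exactly as in Proposition~\ref{prop:lr-optimal-l2}: by Cauchy--Schwarz, $\la L_n^{\le D}, f \ra \le \|L_n^{\le D}\| \cdot \|f\| = \|L_n^{\le D}\|$, with equality if and only if $f$ is a nonnegative scalar multiple of $L_n^{\le D}$; the normalization $\|f\| = 1$ then forces $f^\star = L_n^{\le D}/\|L_n^{\le D}\|$, and the optimal value is $\|L_n^{\le D}\|$. For this normalization to make sense one should check $L_n^{\le D} \ne 0$: the constant function $1$ belongs to $\sV_n^{\le D}$, so $\la L_n^{\le D}, 1 \ra = \la L_n, 1 \ra = \Ex_{\bY \sim \QQ_n}[L_n(\bY)] = 1 \ne 0$, which rules out $L_n^{\le D} = 0$.

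I do not expect any genuine obstacle; this is a routine modification of Proposition~\ref{prop:lr-optimal-l2}. The one step to state carefully is the self-adjointness/idempotence used to swap $L_n$ for $L_n^{\le D}$, which one may alternatively present as the Pythagorean decomposition $L_n = L_n^{\le D} + (L_n - L_n^{\le D})$ with the second summand orthogonal to all of $\sV_n^{\le D}$. A minor technical caveat, which I would mention in passing, is that $\sV_n^{\le D}$ must be a closed subspace of $L^2(\QQ_n)$ for $\sP^{\le D}$ to be well defined; in the settings considered later (where $\sS_n \subseteq \RR^N$, so degree-$\le D$ polynomials form a finite-dimensional space) this is automatic.
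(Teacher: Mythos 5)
Your proposal is correct and follows essentially the same route as the paper: both reduce the objective to $\la L_n, f\ra$ over the unit sphere of $\sV_n^{\le D}$ and then invoke the variational/orthogonality property of the projection, which you simply spell out explicitly via self-adjointness and Cauchy--Schwarz. The added checks ($L_n^{\le D}\ne 0$ via pairing with the constant function, and closedness of $\sV_n^{\le D}$) are sound but not needed beyond what the paper already assumes.
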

\begin{proof}
As in the proof of Proposition~\ref{prop:lr-optimal-l2}, we can restate the optimization problem as maximizing $\langle L_n,f \rangle$ subject to $\|f\| = 1$ and $f \in \sV_n^{\le D}$. Since $\sV_n^{\leq D}$ is a linear subspace of $L^2(\QQ_n)$, the result is then simply a restatement of the variational description and uniqueness of the orthogonal projection in $L^2(\QQ_n)$ (i.e., the fact that $L_n^{\le D}$ is the unique closest element of $\sV_n^{\le D}$ to $L_n$).
\end{proof}

The following informal conjecture is at the heart of the low-degree method. It states that a computational analogue of the second moment method for contiguity holds, with $L_n^{\leq D}$ playing the role of the likelihood ratio. Furthermore, it postulates that polynomials of degree roughly $\log(n)$ are a proxy for polynomial-time algorithms. This conjecture is based on \cite{HS-bayesian,sos-hidden,sam-thesis}, particularly Conjecture~2.2.4 of \cite{sam-thesis}.

\begin{conjecture}[Informal]
    \label{conj:low-deg-informal}
    For ``sufficiently nice'' sequences of probability measures $\PPP$ and $\QQQ$, if there exists $\varepsilon > 0$ and $D = D(n) \ge (\log n)^{1+\varepsilon}$ for which $\|L_n^{\leq D}\|$ remains bounded as $n \to \infty$, then there is no polynomial-time algorithm that strongly distinguishes (see Definition~\ref{def:stat-ind}) $\PPP$ and $\QQQ$.
\end{conjecture}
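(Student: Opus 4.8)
The plan --- with the caveat that this is stated as a \emph{conjecture} precisely because no unconditional proof is known, and indeed an unconditional proof would entail average-case hardness results well beyond the reach of current complexity theory --- is to attack the contrapositive: show that whenever some polynomial-time test $f_n$ strongly distinguishes $\PPP$ and $\QQQ$, the norm $\|L_n^{\leq D}\|$ must be unbounded for every $D = D(n) \geq (\log n)^{1+\varepsilon}$. Equivalently, one wants to ``compile'' an arbitrary efficient distinguisher into a low-degree polynomial distinguisher, after which the low-degree analogue of Proposition~\ref{prop:lr-optimal-l2} immediately forces $\|L_n^{\leq D}\| \to \infty$ along the relevant degrees. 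The moral justification is that essentially all algorithms known to succeed on natural high-dimensional testing problems --- spectral methods, approximate message passing, a bounded number of rounds of belief propagation, local algorithms --- either \emph{are} low-degree polynomials in $\bY$ or are well approximated by them, so one hopes that every efficient test is secretly of this form.

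Concretely, given $f_n$ succeeding with high probability, I would first pass to the bounded ``soft'' test $g_n \colonequals \One[f_n(\bY) = \tp]$, which still satisfies $\Ex_{\bY \sim \PP_n}[g_n] = 1 - o(1)$ and $\Ex_{\bY \sim \QQ_n}[g_n^2] = \QQ_n(f_n(\bY) = \tp) = o(1)$, so that $g_n / \|g_n\|$ has unit $L^2(\QQ_n)$ norm and expectation under $\PP_n$ tending to $\infty$; this much already reproves the classical necessary condition $\|L_n\| \to \infty$. To get the \emph{low-degree} conclusion one would then want a degree-$D$ polynomial $p_n$ with $\|g_n - p_n\|$ small relative to $\|g_n\|$, and a Cauchy--Schwarz manipulation in the spirit of the proof of Lemma~\ref{lem:second-moment-contiguity} --- using that $\langle L_n, p_n\rangle = \langle L_n^{\leq D}, p_n\rangle$ --- to transfer the distinguishing power to $p_n$. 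The degree $(\log n)^{1+\varepsilon}$ is supposed to enter through a quantitative statement of the form ``a bounded function computed by a polynomial-size circuit on the coordinates of $\bY$ admits a good $L^2(\QQ_n)$ approximation by a polynomial of degree $(\log n)^{1+\varepsilon}$,'' and this is where the argument is meant to acquire its content.

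The hard part --- and the reason this remains a conjecture --- is that no such approximation statement holds in general, and even the transfer step above is delicate because the error $\|g_n - p_n\|$ gets paired against $\|L_n\|$, which may be enormous. Worse, parity of $n$ bits, or Gaussian elimination over $\mathbb{F}_2$, is computed by a tiny circuit yet has no nontrivial low-degree $L^2$ approximation under the natural measure, and one can accordingly engineer testing problems efficiently solvable by such ``arithmetic'' tricks while $\|L_n^{\leq D}\|$ stays bounded. So the ``sufficiently nice'' hypothesis must do real work, excluding likelihood ratios with this kind of algebraic structure; but even after such a restriction we do not know how to carry out the compilation step, and any fully rigorous version would presumably remain conditional (e.g., on the hardness of planted clique) or would at best establish the heuristic for a restricted class of algorithms. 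In lieu of a proof, the rigorous support I would present is of exactly this restricted kind: Theorem~\ref{thm:spectral-hard} shows that a low-degree lower bound provably rules out a natural family of spectral algorithms, and the pseudo-calibration connection of Section~\ref{sec:sos} links boundedness of $\|L_n^{\leq D}\|$ to sum-of-squares lower bounds; combined with the empirical fact that low-degree predictions reproduce the conjectured thresholds across many problems, this is the sense in which the conjecture is ``justified.''

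Finally, I would not treat even the reverse implication as automatic: if $\|L_n^{\leq D}\| \to \infty$ for some $D = O(\log n)$, the witnessing polynomial $L_n^{\leq D}$ has $N^{\Theta(\log n)}$ monomials when $\bY \in \RR^N$ with $N = \mathrm{poly}(n)$, so evaluating it naively costs quasi-polynomial, not polynomial, time; converting a low-degree \emph{existence} statement into an actual efficient algorithm requires the optimal polynomial to be structured (e.g., degree $2$, where it amounts to an eigenvector computation, as exploited in Theorem~\ref{thm:spectral-hard}). Thus the cleanest defensible statement is not ``low-degree failure $\Leftrightarrow$ polynomial-time failure'' but the pair of heuristics --- low-degree polynomials are a good proxy for efficient computation in both directions on ``nice'' problems --- which is precisely what Conjecture~\ref{conj:low-deg-informal} asserts and what the remainder of Section~\ref{sec:ldlr-conj-2} marshals evidence for.
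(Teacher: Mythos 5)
The statement is an (explicitly informal) conjecture, and the paper offers no proof of it --- only the supporting evidence assembled in Section~\ref{sec:ldlr-conj-2}: the robustness caveat excluding Gaussian-elimination-type algorithms, the partial rigorous results (Theorems~\ref{thm:thresh-poly-hard} and~\ref{thm:spectral-hard}) ruling out thresholded low-degree polynomials and low-degree spectral methods, the connection to sum-of-squares lower bounds, and the empirical track record. You correctly identify all of this, and your account of why the statement must remain conjectural and of what rigorous support exists matches the paper's own treatment.
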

\noindent We will discuss this conjecture in more detail later (see Section~\ref{sec:ldlr-conj-2}), including the informal meaning of ``sufficiently nice'' and a variant of the LDLR based on \emph{coordinate degree} considered by \cite{sos-hidden,sam-thesis} (see Section~\ref{sec:discuss-conj}). A more general form of the low-degree conjecture (Hypothesis~2.1.5 of \cite{sam-thesis}) states that degree-$D$ polynomials are a proxy for time-$n^{\tilde\Theta(D)}$ algorithms, allowing one to probe a wide range of time complexities. We will see that the converse of these low-degree conjectures often holds in practice; i.e., if $\|L_n^{\le D}\| \to \infty$, then there exists a distinguishing algorithm of runtime roughly $n^D$. As a result, the behavior of $\|L_n^{\le D}\|$ precisely captures the (conjectured) power of computationally-bounded testing in many settings.

The remainder of these notes is organized as follows.
In Section~\ref{sec:agn}, we work through the calculations of $L_n$, $L_n^{\leq D}$, and their norms for a general family of additive Gaussian noise models. In Section~\ref{sec:examples}, we apply this analysis to a few specific models of interest: the spiked Wigner matrix and spiked Gaussian tensor models.
In Section~\ref{sec:ldlr-conj-2}, we give some further discussion of Conjecture~\ref{conj:low-deg-informal}, including evidence (both heuristic and formal) in its favor.

\section{The Additive Gaussian Noise Model}
\label{sec:agn}

We will now describe a concrete class of hypothesis testing problems and analyze them using the machinery introduced in the previous section.
The examples we discuss later (spiked Wigner matrix and spiked tensor) will be specific instances of this general class.

\subsection{The Model}

\begin{definition}[Additive Gaussian Noise Model]
\label{def:agn}
Let $N = N(n) \in \NN$ and let $\bX$ (the ``signal'') be drawn from some distribution $\sP_n$ (the ``prior'') over $\RR^N$. Let $\bZ \in \RR^N$ (the ``noise'') have i.i.d.\ entries distributed as $\sN(0,1)$. Then, we define $\PPP$ and $\QQQ$ as follows.
\begin{itemize}
    \item Under $\PP_n$, observe $\bY = \bX + \bZ$.
    \item Under $\QQ_n$, observe $\bY = \bZ$.
\end{itemize}
\end{definition}
\noindent 
One typical situation takes $\bX$ to be a low-rank matrix or tensor. The following is a particularly important and well-studied special case, which we will return to in Section~\ref{sec:spiked-matrix}.

\begin{example}[Wigner Spiked Matrix Model]
\label{ex:wig}
Consider the additive Gaussian noise model with $N = n^2$, $\RR^N$ identified with $n \times n$ matrices with real entries, and $\sP_n$ defined by $\bX = \lambda \bx \bx^{\top} \in \RR^{n \times n}$, where $\lambda = \lambda(n) > 0$ is a signal-to-noise parameter and $\bx$ is drawn from some distribution $\sX_n$ over $\RR^n$. Then, the task of distinguishing $\PP_{n}$ from $\QQ_n$ amounts to distinguishing $\lambda \bx\bx^\top + \bZ$ from $\bZ$ where $\bZ \in \RR^{n \times n}$ has i.i.d.\ entries distributed as $\sN(0, 1)$. (This variant is equivalent to the more standard model in which the noise matrix is symmetric; see Appendix~\ref{app:symm}.)
\end{example}

\noindent
This problem is believed to exhibit stat-comp gaps for some choices of $\sX_n$ but not others; see, e.g., \cite{LKZ-mmse,LKZ-sparse,mi-rank-one,BMVVX-pca,PWBM-pca}.
At a heuristic level, the typical \emph{sparsity} of vectors under $\sX_n$ seems to govern the appearance of a stat-comp gap.

\begin{remark}
In the spiked Wigner problem, as in many others, one natural statistical task besides distinguishing the null and planted models is to non-trivially estimate the vector $\bx$ given $\bY \sim \PP_n$, i.e., to compute an estimate $\hat \bx = \hat \bx (\bY)$ such that $|\langle \hat \bx, \bx \rangle|/(\|\hat \bx\| \cdot \|\bx\|) \ge \varepsilon$ with high probability, for some constant $\varepsilon > 0$. Typically, for natural high-dimensional problems, non-trivial estimation of $\bx$ is statistically or computationally possible precisely when it is statistically or computationally possible (respectively) to strongly distinguish $\PPP$ and $\QQQ$; see Section~\ref{sec:extensions} for further discussion.
\end{remark}

\subsection{Computing the Classical Quantities}

We now show how to compute the likelihood ratio and its $L^2$-norm under the additive Gaussian noise model. (This is a standard calculation; see, e.g., \cite{MRZ-spectral,BMVVX-pca}.)

\begin{proposition}
    \label{prop:agn-L}
    Suppose $\PPP$ and $\QQQ$ are as defined in Definition~\ref{def:agn}, with a sequence of prior distributions $(\sP_n)_{n \in \NN}$.
    Then, the likelihood ratio of $\PP_n$ and $\QQ_n$ is
    \begin{equation*}
        L_n(\bY) = \frac{d\PP_n}{d\QQ_n}(\bY) = \Ex_{\bX \sim \sP_n}\left[ \exp\left(-\frac{1}{2}\|\bX\|^2 + \la \bX, \bY \ra \right)\right].
    \end{equation*}
\end{proposition}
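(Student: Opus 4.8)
The plan is to realize both $\PP_n$ and $\QQ_n$ as explicit densities against Lebesgue measure on $\RR^N$ and then divide. Under $\QQ_n$ we have $\bY = \bZ \sim \sN(0, I_N)$, whose density with respect to Lebesgue measure $d\bY$ is $q(\bY) = (2\pi)^{-N/2}\exp(-\frac12\|\bY\|^2)$. Under $\PP_n$, conditioned on the signal $\bX$, the observation $\bY = \bX + \bZ$ is $\sN(\bX, I_N)$, with conditional density $(2\pi)^{-N/2}\exp(-\frac12\|\bY-\bX\|^2)$; integrating out $\bX \sim \sP_n$, the marginal density of $\PP_n$ is $p(\bY) = \Ex_{\bX \sim \sP_n}\big[(2\pi)^{-N/2}\exp(-\frac12\|\bY-\bX\|^2)\big]$. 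The interchange needed to assert that ``the mixture of Gaussians has density equal to the mixture of the Gaussian densities'' is justified by Tonelli's theorem, since the integrand is nonnegative; this step simultaneously shows that $\PP_n$ is absolutely continuous with respect to Lebesgue measure, and since $q > 0$ everywhere (so $\QQ_n$ is equivalent to Lebesgue measure) also $\PP_n$ is absolutely continuous with respect to $\QQ_n$, which is the hypothesis needed to speak of $L_n = d\PP_n/d\QQ_n$ in the first place.

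It then remains to compute $L_n = p/q$. The normalization constants $(2\pi)^{-N/2}$ cancel, giving $L_n(\bY) = \Ex_{\bX \sim \sP_n}\big[\exp(-\frac12\|\bY-\bX\|^2 + \frac12\|\bY\|^2)\big]$, and expanding $\|\bY-\bX\|^2 = \|\bY\|^2 - 2\la \bX, \bY\ra + \|\bX\|^2$ inside the exponent yields exactly the claimed expression. To be completely careful one can instead verify the defining property of the Radon--Nikodym derivative directly: for any $A \in \sF_n$, one has $\int_A L_n\, d\QQ_n = \int_A p(\bY)\, d\bY = \Ex_{\bX}\int_A (2\pi)^{-N/2}\exp(-\frac12\|\bY-\bX\|^2)\, d\bY = \Ex_{\bX}\big[\PP_n(A \mid \bX)\big] = \PP_n(A)$, where the swap of the $\bX$-expectation and the $\bY$-integral is again Tonelli.

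There is no genuine analytic obstacle here; the only thing requiring attention is the measure-theoretic bookkeeping --- that the density of the mixture is the mixture of densities, and that the various absolute-continuity hypotheses are in force. Because every quantity involved is nonnegative, Tonelli's theorem legitimizes each interchange of integral and expectation, so the argument is routine once it is set up carefully.
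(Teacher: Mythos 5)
Your proposal is correct and follows essentially the same route as the paper's proof: write both densities with respect to Lebesgue measure, expand $\|\bY-\bX\|^2$, and take the quotient. The additional Tonelli/Radon--Nikodym bookkeeping you supply is sound but not a different argument, just a more careful version of the same one.
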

\begin{proof}
    Write $\sL$ for the Lebesgue measure on $\RR^N$.
    Then, expanding the gaussian densities,
    \begin{align}
        \frac{d\QQ_n}{d\sL}(\bY) 
        &= (2\pi)^{-N / 2}\cdot \exp\left(-\frac{1}{2}\|\bY\|^2\right) \label{eq:agn-ln-denom}\\
        \frac{d\PP_n}{d\sL}(\bY) 
        &= (2\pi)^{-N / 2}\cdot \Ex_{\bX \sim \sP_n}\left[\exp\left(-\frac{1}{2}\|\bY - \bX\|^2\right)\right] \nonumber \\
        &= (2\pi)^{-N / 2}\cdot \exp\left(-\frac{1}{2}\|\bY\|^2\right) \cdot \EE_{\bX \sim \sP_n}\left[\exp\left(-\frac{1}{2}\|\bX\|^2 + \la \bX, \bY \ra\right)\right]\label{eq:agn-ln-num},
    \end{align}
    and $L_n$ is given by the quotient of \eqref{eq:agn-ln-num} and \eqref{eq:agn-ln-denom}.
\end{proof}

\begin{proposition}
\label{prop:agn-L-norm}
Suppose $\PPP$ and $\QQQ$ are as defined in Definition~\ref{def:agn}, with a sequence of prior distributions $(\sP_n)_{n \in \NN}$.
Then,
\begin{equation}\label{eq:gaussian-2nd}
\|L_n\|^2 = \Ex_{\bX^1, \bX^2 \sim \sP_n} \exp(\langle \bX^1, \bX^2 \rangle),
\end{equation}
where $\bX^1, \bX^2$ are drawn independently from $\sP_n$.
\end{proposition}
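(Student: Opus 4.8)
The plan is to start from the closed form for $L_n$ furnished by Proposition~\ref{prop:agn-L} and simply unfold the definition $\|L_n\|^2 \colonequals \Ex_{\bY \sim \QQ_n}[L_n(\bY)^2]$. First I would write the square of the expectation over $\bX \sim \sP_n$ as a double expectation over two independent copies $\bX^1, \bX^2 \sim \sP_n$, obtaining
\[
L_n(\bY)^2 = \Ex_{\bX^1, \bX^2 \sim \sP_n}\left[\exp\left(-\tfrac{1}{2}\|\bX^1\|^2 - \tfrac{1}{2}\|\bX^2\|^2 + \la \bX^1 + \bX^2, \bY \ra\right)\right].
\]

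Next I would take the expectation over $\bY \sim \QQ_n$ (which has i.i.d.\ $\sN(0,1)$ entries) and exchange the order of the $\bY$- and $(\bX^1,\bX^2)$-expectations. The resulting inner integral is a standard Gaussian moment generating function: for any fixed $\bv \in \RR^N$ one has $\Ex_{\bY \sim \QQ_n}[\exp(\la \bv, \bY \ra)] = \exp(\tfrac{1}{2}\|\bv\|^2)$. Applying this with $\bv = \bX^1 + \bX^2$ gives
\[
\|L_n\|^2 = \Ex_{\bX^1, \bX^2 \sim \sP_n}\left[\exp\left(-\tfrac{1}{2}\|\bX^1\|^2 - \tfrac{1}{2}\|\bX^2\|^2 + \tfrac{1}{2}\|\bX^1 + \bX^2\|^2\right)\right].
\]
Finally, expanding $\|\bX^1 + \bX^2\|^2 = \|\bX^1\|^2 + \|\bX^2\|^2 + 2\la \bX^1, \bX^2 \ra$, the norm-squared terms cancel and one is left with exactly $\Ex_{\bX^1,\bX^2 \sim \sP_n}[\exp(\la \bX^1, \bX^2 \ra)]$, which is the claim.

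The only genuine subtlety — and hence the step I would be most careful to state — is the interchange of the $\bY$-expectation with the $(\bX^1, \bX^2)$-expectation. Since the integrand is a nonnegative measurable function of $(\bY, \bX^1, \bX^2)$, this is justified by Tonelli's theorem with no integrability hypothesis needed, and the identity holds as an equality in $[0, +\infty]$. It is worth remarking in passing that the computation also shows $\|L_n\|^2 < \infty$ precisely when $\Ex \exp(\la \bX^1, \bX^2 \ra) < \infty$, i.e.\ exactly when $L_n \in L^2(\QQ_n)$ in the first place; everything else is a routine Gaussian integral and algebraic simplification.
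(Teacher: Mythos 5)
Your proposal is correct and follows essentially the same route as the paper's proof: squaring via two independent ``replicas'' $\bX^1,\bX^2$, swapping the order of expectations, evaluating the Gaussian moment-generating function, and cancelling the norm terms. The only addition is your explicit appeal to Tonelli's theorem for the interchange, which the paper leaves implicit but which is a worthwhile clarification since the integrand is nonnegative.
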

\begin{proof}
We apply the important trick of rewriting a squared expectation as an expectation over the two independent ``replicas'' $\bX^1, \bX^2$ appearing in the result:
\begin{align*}
\|L_n\|^2 &= \Ex_{\bY \sim \QQ_n} \left[\left(\Ex_{\bX \sim \sP_n} \exp\left(\langle \bY,\bX \rangle - \frac{1}{2} \|\bX\|^2\right)\right)^2\right] \\
&= \Ex_{\bY \sim \QQ_n} \Ex_{\bX^1, \bX^2 \sim \sP_n} \exp\left(\langle \bY,\bX^1 + \bX^2\rangle - \frac{1}{2} \|\bX^1\|^2 - \frac{1}{2} \|\bX^2\|^2\right),
\intertext{where $\bX^1$ and $\bX^2$ are drawn independently from $\sP_n$. We now swap the order of the expectations,}
&= \Ex_{\bX^1,\bX^2 \sim \sP_n} \left[\exp\left(-\frac{1}{2} \|\bX^1\|^2 - \frac{1}{2} \|\bX^2\|^2\right)\Ex_{\bY \sim \QQ_n}\exp\left(\langle \bY,\bX^1 + \bX^2\rangle \right)\right],
\intertext{and the inner expectation may be evaluated explicitly using the moment-generating function of a Gaussian distribution (if $y \sim \sN(0,1)$, then for any fixed $t \in \RR$, $\EE[\exp(ty)] = \exp(t^2/2)$),}
&= \Ex_{\bX^1,\bX^2} \exp\left(-\frac{1}{2} \|\bX^1\|^2 - \frac{1}{2} \|\bX^2\|^2 + \frac{1}{2}\|\bX^1 + \bX^2\|^2\right),
\end{align*}
from which the result follows by expanding the term inside the exponential.\footnote{Two techniques from this calculation are elements of the ``replica method'' from statistical physics: (1) writing a power of an expectation as an expectation over independent ``replicas'' and (2) changing the order of expectations and evaluating the moment-generating function. The interested reader may see \cite{MPV-spin-glass} for an early reference, or \cite{MM-IPC,BPW-phys-notes} for two recent presentations.}
\end{proof}

To apply the second moment method for contiguity, it remains to show that~\eqref{eq:gaussian-2nd} is $O(1)$ using problem-specific information about the distribution $\sP_n$. For spiked matrix and tensor models, various general-purpose techniques for doing this are given in \cite{PWBM-pca,PWB-tensor}.

\subsection{Computing the Low-Degree Quantities}
\label{sec:agn-low-deg}

In this section, we will show that the norm of the LDLR (see Section~\ref{sec:ldlr-conj}) takes the following remarkably simple form under the additive Gaussian noise model.
\begin{theorem}
    \label{thm:agn-ldlr-norm}
    Suppose $\PPP$ and $\QQQ$ are as defined in Definition~\ref{def:agn}, with a sequence of prior distributions $(\sP_n)_{n \in \NN}$.
    Let $L_n^{\leq D}$ be as in Definition~\ref{def:LDLR}.
    Then,
    \begin{equation}\label{eq:gaussian-2nd-low}
        \|L_n^{\leq D}\|^2 = \Ex_{\bX^1, \bX^2 \sim \sP_n}\left[ \sum_{d = 0}^D \frac{1}{d!} \la \bX^1, \bX^2 \ra^d \right],
    \end{equation}
    where $\bX^1, \bX^2$ are drawn independently from $\sP_n$.
\end{theorem}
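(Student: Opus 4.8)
The plan is to compute the orthogonal projection $L_n^{\leq D} = \sP^{\leq D} L_n$ explicitly by working in a convenient orthonormal basis of $L^2(\QQ_n)$, namely the (normalized) multivariate Hermite polynomials. Since $\QQ_n = \sN(0, I_N)$ is a standard Gaussian, the products $H_\alpha(\bY) = \prod_{i=1}^N h_{\alpha_i}(Y_i)$, where $h_k$ is the degree-$k$ normalized (``probabilists''') Hermite polynomial and $\alpha \in \NN^N$ ranges over multi-indices, form a complete orthonormal system, and $H_\alpha$ has total degree $|\alpha| \colonequals \sum_i \alpha_i$. Hence $\sV_n^{\leq D}$ is spanned by $\{H_\alpha : |\alpha| \leq D\}$, and $\|L_n^{\leq D}\|^2 = \sum_{|\alpha| \leq D} \langle L_n, H_\alpha\rangle^2$ by Parseval. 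So everything reduces to computing the Fourier--Hermite coefficients $\widehat{L_n}(\alpha) = \langle L_n, H_\alpha \rangle = \Ex_{\bY \sim \QQ_n}[L_n(\bY) H_\alpha(\bY)]$.

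Next I would evaluate these coefficients. Using $\Ex_{\bY \sim \QQ_n}[L_n(\bY) H_\alpha(\bY)] = \Ex_{\bY \sim \PP_n}[H_\alpha(\bY)]$ (the defining property of the likelihood ratio), and then the tower property conditioning on $\bX$, this becomes $\Ex_{\bX \sim \sP_n} \Ex_{\bZ}[H_\alpha(\bX + \bZ)]$ where $\bZ \sim \sN(0, I_N)$. The key one-dimensional fact is the ``mean-shift'' identity for Hermite polynomials: $\Ex_{z \sim \sN(0,1)}[h_k(\mu + z)] = \mu^k/\sqrt{k!}$ for the normalized Hermite polynomials (equivalently, $\Ex[\mathrm{He}_k(\mu+z)] = \mu^k$ in the monic normalization). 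Multiplying over coordinates gives $\Ex_{\bZ}[H_\alpha(\bX + \bZ)] = \prod_i X_i^{\alpha_i}/\sqrt{\alpha_i!} = \bX^\alpha / \sqrt{\alpha!}$, so $\widehat{L_n}(\alpha) = \Ex_{\bX \sim \sP_n}[\bX^\alpha]/\sqrt{\alpha!}$.

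Finally I would assemble the sum. By Parseval,
\begin{equation*}
\|L_n^{\leq D}\|^2 = \sum_{|\alpha| \leq D} \frac{1}{\alpha!}\left(\Ex_{\bX \sim \sP_n}[\bX^\alpha]\right)^2 = \sum_{|\alpha| \leq D} \frac{1}{\alpha!} \Ex_{\bX^1, \bX^2 \sim \sP_n}[(\bX^1)^\alpha (\bX^2)^\alpha],
\end{equation*}
where in the last step I introduce the two independent replicas to linearize the square. Swapping the (finite) sum and the expectation, it remains to recognize the combinatorial identity $\sum_{\alpha : |\alpha| = d} \frac{1}{\alpha!} \prod_i (X_i^1 X_i^2)^{\alpha_i} = \frac{1}{d!}\langle \bX^1, \bX^2\rangle^d$, which is exactly the multinomial theorem applied to $\langle \bX^1, \bX^2\rangle^d = (\sum_i X_i^1 X_i^2)^d$. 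Summing over $d$ from $0$ to $D$ yields \eqref{eq:gaussian-2nd-low}.

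I expect the main obstacle to be stating and justifying the Hermite mean-shift identity cleanly (and fixing normalization conventions consistently throughout), rather than anything conceptually deep; a secondary point to handle carefully is the integrability/convergence bookkeeping — one should note that the relevant coefficients and sums are finite, e.g.\ assuming $\sP_n$ has finite moments of all orders so that $L_n \in L^2(\QQ_n)$ and the Hermite expansion is valid (the paper's blanket absolute-continuity assumption plus mild moment conditions suffice). One could alternatively avoid Hermite polynomials entirely by noting that $L_n^{\leq D}$ is the projection of the explicit formula from Proposition~\ref{prop:agn-L}, expanding $\exp(\langle \bX, \bY\rangle)$, and truncating — but the Hermite route makes the orthogonality, hence Parseval, transparent and is the cleanest path.
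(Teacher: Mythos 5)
Your proposal is correct and follows essentially the same route as the paper: expand in the multivariate Hermite basis, compute the coefficients $\la L_n, H_{\bm\alpha}\ra = \EE_{\bX \sim \sP_n}[\prod_i X_i^{\alpha_i}]$ via the Hermite translation identity (the paper's Proposition~\ref{prop:hermite-translation}, used in its first of three proofs of Proposition~\ref{prop:agn-components}), then apply Parseval, the replica trick, and the multinomial theorem. The only difference is a cosmetic choice of normalization convention for the Hermite polynomials.
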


\begin{remark}\label{rem:taylor}
Note that~\eqref{eq:gaussian-2nd-low} can be written as $\EE_{\bX^1,\bX^2} [\exp^{\le D}(\langle \bX^1,\bX^2 \rangle)]$, where $\exp^{\le D}(t)$ denotes the degree-$D$ truncation of the Taylor series of $\exp(t)$. This can be seen as a natural low-degree analogue of the full second moment~\eqref{eq:gaussian-2nd}. However, the low-degree Taylor series truncation in $\exp^{\le D}$ is conceptually distinct from the low-degree projection in $L_n^{\le D}$, because the latter corresponds to truncation in the Hermite orthogonal polynomial basis (see below), while the former corresponds to truncation in the monomial basis.
\end{remark}

\noindent Our proof of Theorem~\ref{thm:agn-ldlr-norm} will follow the strategy of \cite{HS-bayesian,sos-hidden,sam-thesis} of expanding $L_n$ in a basis of orthogonal polynomials with respect to $\QQ_n$, which in this case are the \emph{Hermite polynomials}.

We first give a brief and informal review of the multivariate Hermite polynomials (see Appendix~\ref{app:hermite} or the reference text \cite{Szego-OP} for further information).
The univariate Hermite polynomials\footnote{We will not actually use the definition of the univariate Hermite polynomials (although we will use certain properties that they satisfy as needed), but the definition is included for completeness in Appendix~\ref{app:hermite}.} are a sequence $h_k(x) \in \RR[x]$ for $k \geq 0$, with $\deg h_k = k$.
They may be normalized as $\what{h}_k(x) = h_k(x) / \sqrt{k!}$, and with this normalization satisfy the orthonormality conditions
\begin{equation}
    \label{eq:hermite-orth-1d}
    \Ex_{y \sim \sN(0, 1)}\what{h}_k(y)\what{h}_\ell(y) = \delta_{k\ell}.
\end{equation}
The multivariate Hermite polynomials in $N$ variables are indexed by $\bm\alpha \in \NN^N$, and are merely products of the $h_k$: $H_{\bm\alpha}(\bx) = \prod_{i = 1}^N h_{\alpha_i}(x_i)$.
They also admit a normalized variant $\what{H}_{\bm\alpha}(\bx) = \prod_{i = 1}^N \what{h}_{\alpha_i}(x_i)$, and with this normalization satisfy the orthonormality conditions
\begin{equation*}
    \Ex_{\bY \sim \sN(0, \bm I_N)}\what{H}_{\bm\alpha}(\bY)\what{H}_{\bm\beta}(\bY) = \delta_{\bm\alpha \bm\beta},
\end{equation*}
which may be inferred directly from \eqref{eq:hermite-orth-1d}.

The collection of those $\what{H}_{\bm\alpha}$ for which $|\bm\alpha| \colonequals \sum_{i = 1}^N \alpha_i \leq D$ form an orthonormal basis for $\sV_n^{\leq D}$ (which, recall, is the subspace of polynomials of degree $\le D$).
Thus we may expand
\begin{equation}
    \label{eq:agn-Ln-expansion}
    L_n^{\leq D}(\bY) = \sum_{\substack{\bm\alpha \in \NN^N \\ |\bm\alpha| \leq D}} \la L_n, \what{H}_{\bm\alpha}\ra \what{H}_{\bm\alpha}(\bY) = \sum_{\substack{\bm\alpha \in \NN^N \\ |\bm\alpha| \leq D}} \frac{1}{\prod_{i = 1}^N \alpha_i!} \la L_n, H_{\bm\alpha}\ra H_{\bm\alpha}(\bY),
\end{equation}
and in particular we have
\begin{equation}
    \label{eq:agn-Ln-norm-expansion}
    \|L_n^{\leq D}\|^2 = \sum_{\substack{\bm\alpha \in \NN^N \\ |\bm\alpha| \leq D}} \frac{1}{\prod_{i = 1}^N \alpha_i!} \la L_n, H_{\bm\alpha}\ra^2.
\end{equation}
Our main task is then to compute quantities of the form $\la L_n, H_{\bm\alpha} \ra$. Note that these can be expressed either as $\EE_{\bY \sim \QQ_n}[L_n(\by) H_{\bm\alpha}(\bY)]$ or $\EE_{\bY \sim \PP_n}[H_{\bm\alpha}(\bY)]$.
We will give three techniques for carrying out this calculation, each depending on a different identity satisfied by the Hermite polynomials. Each will give a proof of the following remarkable formula, which shows that the quantities $\la L_n, H_{\bm\alpha} \ra$ are simply the moments of $\sP_n$.

\begin{proposition}
    \label{prop:agn-components}
    For any $\bm\alpha \in \NN^N$,
    \begin{equation*}
        \langle L_n, H_{\bm\alpha} \rangle = \Ex_{\bX \sim \sP_n}\left[ \prod_{i = 1}^N X_i^{\alpha_i} \right].
    \end{equation*}
\end{proposition}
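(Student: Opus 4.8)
The plan is to start from the alternative expression $\la L_n, H_{\bm\alpha}\ra = \Ex_{\bY \sim \PP_n}[H_{\bm\alpha}(\bY)]$ noted just above the statement, and exploit the product structure of both the multivariate Hermite polynomials and the Gaussian noise. Under $\PP_n$ we may write $\bY = \bX + \bZ$ with $\bX \sim \sP_n$ and $\bZ \sim \sN(0,\bm I_N)$ independent, so conditioning on $\bX$ and applying Fubini gives
\[
\la L_n, H_{\bm\alpha}\ra = \Ex_{\bX \sim \sP_n}\Ex_{\bZ \sim \sN(0,\bm I_N)}\left[ H_{\bm\alpha}(\bX + \bZ)\right] = \Ex_{\bX \sim \sP_n}\left[ \prod_{i=1}^N \Ex_{z \sim \sN(0,1)}\left[ h_{\alpha_i}(X_i + z)\right]\right],
\]
where the last step uses $H_{\bm\alpha}(\bx) = \prod_i h_{\alpha_i}(x_i)$ together with the independence of the coordinates of $\bZ$.

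It then suffices to establish the univariate identity
\[
\Ex_{z \sim \sN(0,1)}\left[ h_k(x + z)\right] = x^k \qquad \text{for all } k \ge 0, \ x \in \RR,
\]
since plugging this in with $x = X_i$, $k = \alpha_i$ immediately yields $\la L_n, H_{\bm\alpha}\ra = \Ex_{\bX \sim \sP_n}[\prod_i X_i^{\alpha_i}]$, which is the claim. This is a standard property of the (probabilists') Hermite polynomials; I would deduce it from the exponential generating function $\sum_{k \ge 0} \frac{t^k}{k!} h_k(x) = \exp(tx - t^2/2)$: taking $\Ex_{z \sim \sN(0,1)}$ of $\exp(t(x+z) - t^2/2)$ and using $\Ex_z[\exp(tz)] = \exp(t^2/2)$ collapses the right-hand side to $\exp(tx) = \sum_k \frac{t^k}{k!} x^k$, and matching coefficients of $t^k$ gives the result. (Alternatively one can induct using the recurrence $h_{k+1}(x) = x h_k(x) - h_k'(x)$ and Gaussian integration by parts, or simply cite Appendix~\ref{app:hermite}.)

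The remaining steps are entirely routine; the only point needing a word of care is the interchange of $\Ex_{\bX}$ and $\Ex_{\bZ}$, which is justified because $H_{\bm\alpha}(\bX+\bZ)$ is a fixed polynomial in finitely many Gaussian variables with coefficients polynomial in $\bX$, and $L_n \in L^1(\QQ_n)$. So there is really no serious obstacle: the whole content of the proposition is the Hermite identity above, which says that Gaussian smoothing turns $h_k$ back into the plain monomial $x^k$. As the excerpt hints, one could run the argument symmetrically starting from the explicit formula for $L_n$ in Proposition~\ref{prop:agn-L}, swapping $\Ex_{\bY \sim \QQ_n}$ with $\Ex_{\bX \sim \sP_n}$ and instead invoking $\Ex_{y \sim \sN(0,1)}[\exp(ty) h_k(y)] = t^k \exp(t^2/2)$ (after which the factors $\prod_i \exp(X_i^2/2)$ cancel the $\exp(-\tfrac12\|\bX\|^2)$ in $L_n$); I would present whichever Hermite identity is most convenient given what has already been recorded about the $h_k$.
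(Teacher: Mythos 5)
Your proposal is correct and follows essentially the same route as the paper's first proof of Proposition~\ref{prop:agn-components}: rewrite $\la L_n, H_{\bm\alpha}\ra$ as $\Ex_{\bY \sim \PP_n}[H_{\bm\alpha}(\bY)]$, condition on $\bX$, factor over the independent coordinates of $\bZ$, and invoke the Hermite translation identity $\Ex_{y \sim \sN(\mu,1)}[h_k(y)] = \mu^k$ (Proposition~\ref{prop:hermite-translation}). The only cosmetic difference is that you derive that identity from the generating function rather than by induction on the recursion, and your closing remark about working instead from the explicit formula for $L_n$ is essentially the paper's alternative Proofs~2 and~3.
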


\noindent Before continuing with the various proofs of Proposition~\ref{prop:agn-components}, let us show how to use it to complete the proof of Theorem~\ref{thm:agn-ldlr-norm}.

\begin{proof}[Proof of Theorem~\ref{thm:agn-ldlr-norm}]
    By Proposition~\ref{prop:agn-components} substituted into \eqref{eq:agn-Ln-norm-expansion}, we have
    \begin{align*}
        \|L_n^{\leq D}\|^2
        &= \sum_{\substack{\bm\alpha \in \NN^N \\ |\bm\alpha| \leq D}} \frac{1}{\prod_{i = 1}^N \alpha_i!} \left(\Ex_{\bX \sim \sP_n}\left[\prod_{i = 1}^N X_i^{\alpha_i}\right]\right)^2, \\
        \intertext{and performing the ``replica'' manipulation (from the proof of Proposition~\ref{prop:agn-L-norm}) again, this may be written}
        &= \Ex_{\bX^1, \bX^2 \sim \sP_n}\left[\sum_{\substack{\bm\alpha \in \NN^N \\ |\bm\alpha| \leq D}} \frac{1}{\prod_{i = 1}^N \alpha_i!} \prod_{i = 1}^N (X^1_iX^2_i)^{\alpha_i}\right] \\
        &= \Ex_{\bX^1, \bX^2 \sim \sP_n}\left[\sum_{d = 0}^D \frac{1}{d!}\sum_{\substack{\bm\alpha \in \NN^N \\ |\bm\alpha| = d}} \binom{d}{\alpha_1 \cdots \alpha_N} \prod_{i = 1}^N (X^1_iX^2_i)^{\alpha_i}\right] \\
        &= \Ex_{\bX^1, \bX^2 \sim \sP_n}\left[\sum_{d = 0}^D \frac{1}{d!}\la \bX^1, \bX^2 \ra^d \right],
    \end{align*}
    where the last step uses the multinomial theorem.
\end{proof}

We now proceed to the three proofs of Proposition~\ref{prop:agn-components}. For the sake of brevity, we omit here the (standard) proofs of the three Hermite polynomial identities these proofs are based on, but the interested reader may review those proofs in Appendix~\ref{app:hermite}.

\subsubsection{Proof 1: Hermite Translation Identity}

The first (and perhaps simplest) approach to proving Proposition~\ref{prop:agn-components} uses the following formula for the expectation of a Hermite polynomial evaluated on a Gaussian random variable of non-zero mean.

\begin{proposition}
    \label{prop:hermite-translation}
    For any $k \geq 0$ and $\mu \in \RR$,
    \[ \Ex_{y \sim \sN(\mu, 1)}\left[h_k(y)\right] = \mu^k. \]
\end{proposition}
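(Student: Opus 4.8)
The plan is to prove Proposition~\ref{prop:hermite-translation} from the exponential generating function of the (probabilist's) Hermite polynomials,
\[
\sum_{k \ge 0} \frac{h_k(x)}{k!}\, t^k = \exp\!\left(tx - \frac{1}{2}t^2\right),
\]
one of the standard Hermite identities recorded in Appendix~\ref{app:hermite}. First I would apply $\Ex_{y \sim \sN(\mu,1)}[\,\cdot\,]$ to both sides. On the right-hand side, factoring out the deterministic $e^{-t^2/2}$ and invoking the moment-generating function of a mean-$\mu$, unit-variance Gaussian, $\Ex_{y \sim \sN(\mu,1)}[e^{ty}] = e^{\mu t + t^2/2}$, gives
\[
\Ex_{y \sim \sN(\mu,1)}\!\left[\exp\!\left(ty - \frac{1}{2} t^2\right)\right] = e^{-t^2/2}\, e^{\mu t + t^2/2} = e^{\mu t} = \sum_{k \ge 0}\frac{\mu^k}{k!}\, t^k .
\]
On the left-hand side, interchanging expectation and sum produces $\sum_{k \ge 0} \frac{1}{k!}\Ex_{y \sim \sN(\mu,1)}[h_k(y)]\, t^k$. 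Matching coefficients of $t^k$ in these two power series in $t$ yields $\Ex_{y \sim \sN(\mu,1)}[h_k(y)] = \mu^k$, as claimed.

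The only point needing justification is the interchange of the infinite sum with the expectation, and I expect this to be the (mild) main obstacle. I would handle it by dominated convergence: a standard bound of the form $|h_k(x)| \le C\sqrt{k!}\, e^{x^2/4}$ (Cram\'er's inequality) shows that, for each fixed real $t$, the partial sums of $\sum_{k} \frac{h_k(y)}{k!} t^k$ are uniformly bounded in absolute value by $C\, e^{y^2/4} \sum_{k} \frac{|t|^k}{\sqrt{k!}}$, which is finite, and the function $e^{y^2/4}$ is integrable against the $\sN(\mu,1)$ density because $y^2/4 - (y-\mu)^2/2 \to -\infty$ as $|y| \to \infty$. This is routine and I would not dwell on it.

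An alternative route, which sidesteps convergence bookkeeping entirely, is induction on $k$ using only the three-term recurrence $h_{k+1}(x) = x\, h_k(x) - k\, h_{k-1}(x)$, the derivative identity $h_k'(x) = k\, h_{k-1}(x)$, and Gaussian integration by parts $\Ex_{y \sim \sN(\mu,1)}[(y-\mu) g(y)] = \Ex_{y \sim \sN(\mu,1)}[g'(y)]$. The cases $k = 0, 1$ are immediate since $h_0 \equiv 1$, $h_1(x) = x$, and $\Ex_{y\sim\sN(\mu,1)}[y] = \mu$. For the inductive step, integration by parts gives $\Ex[y\, h_k(y)] = \mu\, \Ex[h_k(y)] + \Ex[h_k'(y)] = \mu^{k+1} + k\, \Ex[h_{k-1}(y)] = \mu^{k+1} + k\mu^{k-1}$ by the inductive hypothesis, whence $\Ex[h_{k+1}(y)] = \Ex[y\, h_k(y)] - k\, \Ex[h_{k-1}(y)] = \mu^{k+1}$. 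Since neither argument is difficult, the choice is a matter of taste; the generating-function proof is the more conceptual one and fits the ``perhaps simplest'' description in the surrounding text, so I would lead with it and relegate the inductive version to a remark.
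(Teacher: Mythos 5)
Your proposal is correct, and your lead argument takes a genuinely different route from the paper's. The paper proves Proposition~\ref{prop:hermite-translation} by induction on $k$: it writes $\Ex_{y \sim \sN(\mu,1)}[h_k(y)] = \Ex_{y \sim \sN(0,1)}[h_k(\mu+y)]$, expands via the defining recursion $h_{k+1}(x) = x h_k(x) - h_k'(x)$, and uses the $k=1$ case of Gaussian integration by parts (Proposition~\ref{prop:gaussian-ibp}) to cancel the derivative terms, leaving $\mu \Ex[h_{k-1}(\mu+y)]$. Your secondary, inductive argument is essentially this proof in disguise (your three-term recurrence plus $h_k' = k h_{k-1}$ is equivalent to the paper's recursion), so you have independently rediscovered the paper's route as your ``alternative.'' Your primary route via the generating function of Proposition~\ref{prop:hermite-gf} is also sound and not circular within the paper's logical structure, since the paper establishes the generating function directly from integration by parts rather than from the translation identity; the Cram\'er-type bound $|h_k(x)| \le C\sqrt{k!}\,e^{x^2/4}$ does justify dominated convergence against the $\sN(\mu,1)$ density, and matching coefficients of two everywhere-convergent power series in $t$ is legitimate. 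What each approach buys: the generating-function proof is a one-line conceptual computation once the interchange is justified, but it leans on a heavier identity and a nontrivial pointwise bound; the paper's induction uses only the recursion and one integration by parts, which is why the paper can keep the appendix self-contained with no convergence bookkeeping. One small caveat: the phrase ``perhaps simplest'' in Section~\ref{sec:agn-low-deg} refers to the translation identity being the simplest of the three routes to Proposition~\ref{prop:agn-components}, not to a preferred proof of the translation identity itself, so it does not favor your generating-function argument over the inductive one.
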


\begin{proof}[Proof 1 of Proposition~\ref{prop:agn-components}]
    We rewrite $\la L_n, H_{\bm\alpha} \ra$ as an expectation with respect to $\PP_n$:
    \begin{align*}
    \la L_n, H_{\bm\alpha} \ra 
    &= \Ex_{\bY \sim \QQ_n}\left[L_n(\bY)H_{\bm\alpha}(\bY)\right] \\
    &= \Ex_{\bY \sim \PP_n}\left[H_{\bm\alpha}(\bY)\right] \\
    &= \Ex_{\bY \sim \PP_n}\left[\prod_{i = 1}^N h_{\alpha_i}(Y_i) \right]
    \intertext{and recall $\bY = \bX + \bZ$ for $\bX \sim \sP_n$ and $\bZ \sim \sN(\bm 0, \bm I_N)$ under $\PP_n$,}
    &= \Ex_{\bX \sim \sP_n}\left[\Ex_{\bZ \sim \sN(\bm 0, \bm I_N)}\prod_{i = 1}^N h_{\alpha_i}(X_i + Z_i) \right] \\
    &= \Ex_{\bX \sim \sP_n}\left[\prod_{i = 1}^N\Ex_{z \sim \sN(X_i, 1)}h_{\alpha_i}(z)\right] \\
    &= \Ex_{\bX \sim \sP_n}\left[\prod_{i = 1}^N X_i^{\alpha_i}\right],
    \end{align*}
    where we used Proposition~\ref{prop:hermite-translation} in the last step.
\end{proof}

\subsubsection{Proof 2: Gaussian Integration by Parts}

The second approach to proving Proposition~\ref{prop:agn-components} uses the following generalization of a well-known integration by parts formula for Gaussian random variables.
\begin{proposition}
    \label{prop:gaussian-ibp}
    If $f: \RR \to \RR$ is $k$ times continuously differentiable and $f(y)$ and its first $k$ derivatives are bounded by $O(\exp(|y|^\alpha))$ for some $\alpha \in (0, 2)$, then
    \begin{equation*}
        \Ex_{y \sim \sN(0, 1)}\left[h_k(y)f(y)\right] = \Ex_{y \sim \sN(0, 1)}\left[\frac{d^k f}{dy^k}(y)\right].
    \end{equation*}
\end{proposition}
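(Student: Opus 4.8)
The plan is to reduce the claim to the \emph{Rodrigues formula} for the (probabilists') Hermite polynomials and then integrate by parts $k$ times, with the growth hypothesis on $f$ entering only to kill the boundary terms. Write $\varphi(y) \colonequals (2\pi)^{-1/2}\exp(-y^2/2)$ for the standard Gaussian density. First I would record the identity
\begin{equation*}
\frac{d^k}{dy^k}\varphi(y) = (-1)^k\, h_k(y)\,\varphi(y) \qquad (k \ge 0),
\end{equation*}
equivalently $h_k(y) = (-1)^k e^{y^2/2}\,\tfrac{d^k}{dy^k} e^{-y^2/2}$. This follows by a one-line induction on $k$: it holds for $k=0$ since $h_0 \equiv 1$, and differentiating $\tfrac{d^k}{dy^k}\varphi = (-1)^k h_k \varphi$ and using $\varphi'(y) = -y\varphi(y)$ yields $\tfrac{d^{k+1}}{dy^{k+1}}\varphi(y) = (-1)^{k+1}\big(y h_k(y) - h_k'(y)\big)\varphi(y)$, so the claim is exactly the standard Hermite recurrence $h_{k+1}(y) = y h_k(y) - h_k'(y)$ (cf.\ Appendix~\ref{app:hermite}).

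Given this, I would write
\begin{equation*}
\Ex_{y\sim\sN(0,1)}\left[h_k(y) f(y)\right] = \int_\RR h_k(y) f(y)\varphi(y)\,dy = (-1)^k \int_\RR f(y)\,\frac{d^k}{dy^k}\varphi(y)\,dy
\end{equation*}
and integrate by parts $k$ times, at each step transferring one derivative from $\varphi$ onto $f$. It is cleanest to phrase this as an induction whose inductive statement is that after $j$ steps the integral equals $(-1)^{k-j}\int_\RR f^{(j)}(y)\,\tfrac{d^{k-j}}{dy^{k-j}}\varphi(y)\,dy$. After $k$ steps the accumulated sign is $(-1)^0 = 1$ and one is left with $\int_\RR f^{(k)}(y)\varphi(y)\,dy = \Ex_{y\sim\sN(0,1)}[f^{(k)}(y)]$, which is the desired conclusion --- \emph{provided} every boundary term vanishes and every intermediate integrand is absolutely integrable.

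Both of these are the only non-formal points, and this is where the hypothesis $\alpha \in (0,2)$ is used. By the Rodrigues identity, $\tfrac{d^m}{dy^m}\varphi(y) = \pm h_m(y)\varphi(y)$, which is bounded by $C_m(1+|y|^m)\exp(-y^2/2)$. A generic intermediate integrand has the form $f^{(j)}(y)\cdot\tfrac{d^{k-j}}{dy^{k-j}}\varphi(y)$ with $j \le k$, and a generic boundary term the form $\big[f^{(j-1)}(y)\cdot\tfrac{d^{k-j}}{dy^{k-j}}\varphi(y)\big]_{-\infty}^{\infty}$; combining the bound on the Hermite factor with the assumed bound $|f^{(j)}(y)| = O(\exp(|y|^\alpha))$ shows every such quantity is $O\big((1+|y|^k)\exp(|y|^\alpha - \tfrac12 y^2)\big)$, which tends to $0$ as $|y|\to\infty$ and is integrable over $\RR$ because $|y|^\alpha - \tfrac12 y^2 \to -\infty$. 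This simultaneously validates each integration by parts (the integrals all converge) and annihilates each boundary term.

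The main obstacle is thus not conceptual but the careful bookkeeping of these $2k$ boundary terms and intermediate integrals across the successive integrations by parts; setting up the explicit inductive form above means the single growth estimate of the previous paragraph is invoked uniformly and the argument closes cleanly. (A purely inductive alternative --- induct on $k$ using the recurrence $h_k(y) = y h_{k-1}(y) - h_{k-1}'(y)$ together with the base case $\Ex[y f(y)] = \Ex[f'(y)]$ --- is also possible but requires applying the induction hypothesis to both $f$ and an auxiliary function built from $f$, and I expect the Rodrigues-plus-integration-by-parts route to be shorter.)
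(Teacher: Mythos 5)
Your proof is correct, but it takes a genuinely different route from the paper's. The paper proves the identity by induction on $k$: after verifying $k=0$ and the $k=1$ case $\EE[yg(y)]=\EE[g'(y)]$ by a single ordinary integration by parts, it expands $h_k = y h_{k-1} - h_{k-1}'$ and applies the $k=1$ case to the auxiliary function $g = h_{k-1}f$, so that the term $\EE[h_{k-1}'f]$ cancels and one is left with $\EE[h_{k-1}f']$, to which the degree-$(k-1)$ hypothesis applies. This is exactly the ``purely inductive alternative'' you sketch in your closing parenthesis, and the cost you anticipate (applying the hypothesis to an auxiliary function) is mild, since multiplying $f$ by the polynomial $h_{k-1}$ preserves the growth condition $O(\exp(|y|^\alpha))$ after enlarging constants. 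Your route instead establishes the Rodrigues identity $\frac{d^k}{dy^k}\varphi = (-1)^k h_k \varphi$ and integrates by parts $k$ times directly. What your approach buys is transparency about where the hypothesis $\alpha\in(0,2)$ enters: you exhibit the uniform estimate $O\bigl((1+|y|^k)\exp(|y|^\alpha - \tfrac12 y^2)\bigr)$ that kills every boundary term and guarantees every intermediate integral converges, whereas the paper's proof buries these analytic issues in the unstated boundary terms of its $k=1$ base case. What the paper's approach buys is brevity (one integration by parts instead of $k$) and no need for the Rodrigues formula as a separate lemma. Both arguments are complete and standard.
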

\noindent
(The better-known case is $k = 1$, where one may substitute $h_1(x) = x$.)

\begin{proof}[Proof 2 of Proposition~\ref{prop:agn-components}]
We simplify using Proposition~\ref{prop:gaussian-ibp}:
\begin{align*}
    \la L_n, H_{\bm\alpha} \ra = \Ex_{\bY \sim \QQ_n}\left[L_n(\bY)\prod_{i = 1}^N h_{\alpha_i}(Y_i)\right] = \Ex_{\bY \sim \QQ_n}\left[ \frac{\partial^{|\bm\alpha|}L_n}{\partial Y_1^{\alpha_1} \cdots \partial Y_N^{\alpha_N}}(\bY)\right].
\end{align*}
Differentiating $L_n$ under the expectation, we have
\[ \frac{\partial^{|\bm\alpha|}L}{\partial Y_1^{\alpha_1} \cdots \partial Y_N^{\alpha_N}}(\bY) = \Ex_{\bX \sim \sP_n}\left[\,\prod_{i = 1}^N X_i^{\alpha_i}\, \exp\left(-\frac{1}{2}\|\bX\|^2 + \la \bX, \bY \ra\right)\right]. \]
Taking the expectation over $\bY$, we have $\EE_{\bY \sim \QQ_n} \exp(\la \bX, \bY \ra) = \exp(\frac{1}{2}\|\bX\|^2)$, so the entire second term cancels and the result follows.
\end{proof}

\subsubsection{Proof 3: Hermite Generating Function}

Finally, the third approach to proving Proposition~\ref{prop:agn-components} uses the following generating function for the Hermite polynomials.

\begin{proposition}
    \label{prop:hermite-gf}
    For any $x, y \in \RR$,
    \[ \exp\left(xy - \frac{1}{2}x^2\right) = \sum_{k = 0}^\infty \frac{1}{k!}x^k h_k(y). \]
\end{proposition}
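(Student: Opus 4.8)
The plan is to read off the generating function from the Rodrigues formula $h_k(y) = (-1)^k e^{y^2/2}\frac{d^k}{dy^k}e^{-y^2/2}$ (one of the standard characterizations of the Hermite polynomials recalled in Appendix~\ref{app:hermite}), by recognizing $\exp(xy - \frac12 x^2)$ as, up to a fixed Gaussian factor, a shifted Gaussian in the variable $x$ whose Taylor coefficients are exactly the derivatives that appear in Rodrigues' formula.

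Concretely, I would fix $y \in \RR$, set $\phi(t) \colonequals \exp(-\frac12 t^2)$, and consider $g(x) \colonequals \phi(y - x) = \exp(-\frac12(y-x)^2)$. Since $g$ is a composition of entire functions, it is entire in $x$ and hence equals its Taylor series about $x = 0$ for every $x \in \RR$, i.e.\ $g(x) = \sum_{k \ge 0}\frac{x^k}{k!}\,g^{(k)}(0)$. The chain rule gives $g^{(k)}(x) = (-1)^k \phi^{(k)}(y - x)$, so $g^{(k)}(0) = (-1)^k \phi^{(k)}(y) = e^{-y^2/2}\,h_k(y)$ by Rodrigues' formula. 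Substituting back yields $\exp(-\frac12(y-x)^2) = \sum_{k \ge 0}\frac{x^k}{k!}\,e^{-y^2/2}\,h_k(y)$; finally, expand $-\frac12(y-x)^2 = xy - \frac12 x^2 - \frac12 y^2$ on the left and divide both sides by $e^{-y^2/2} > 0$ to obtain the stated identity.

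The only point requiring any care — the ``main obstacle,'' though a mild one — is justifying that the Taylor expansion of $g$ is valid globally rather than merely near $x = 0$; this is immediate from the observation that $x \mapsto \exp(-\frac12(y-x)^2)$ is entire, with Taylor series converging absolutely on all of $\CC$. If one preferred to avoid Rodrigues' formula and instead start from the three-term recurrence $h_{k+1}(y) = y\,h_k(y) - k\,h_{k-1}(y)$ with $h_0(y) = 1$, an alternative plan is to define $F(x,y) \colonequals \sum_{k \ge 0}\frac{x^k}{k!}h_k(y)$, check absolute convergence using a crude bound on $|h_k(y)|$, and verify that $F$ satisfies the linear ODE $\partial_x F = (y - x)F$ with $F(0,y) = 1$, whose unique solution is $\exp(xy - \frac12 x^2)$. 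I would present the Rodrigues-based argument as the main proof, since it is shorter and the convergence bookkeeping is trivial.
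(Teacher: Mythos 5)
Your proof is correct, but it takes a genuinely different route from the paper's. The paper computes the Hermite coefficients of $y \mapsto \exp(xy - \frac{1}{2}x^2)$ directly in $L^2(\sN(0,1))$, using the Gaussian integration-by-parts identity (Proposition~\ref{prop:gaussian-ibp}) to evaluate $\la f_x, \what{h}_k \ra$ and then invoking completeness of the Hermite system; strictly speaking this yields convergence in $L^2(\sN(0,1))$ rather than the pointwise statement, and it deliberately reuses machinery already established in the appendix. Your argument instead Taylor-expands the entire function $x \mapsto \exp(-\frac{1}{2}(y-x)^2)$ and reads off the coefficients from the Rodrigues formula; it is more elementary (no orthogonality or $L^2$ theory), and it delivers genuine pointwise (indeed locally uniform, even complex-analytic) equality for all $x, y$. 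The one thing you should patch: the Rodrigues formula $h_k(y) = (-1)^k e^{y^2/2}\frac{d^k}{dy^k}e^{-y^2/2}$ is \emph{not} actually stated in Appendix~\ref{app:hermite}; the paper defines $h_k$ only by the recursion $h_0 = 1$, $h_{k+1}(y) = y\,h_k(y) - h_k'(y)$. You therefore need to derive Rodrigues from that definition, which is a one-line induction: writing $\phi(y) = e^{-y^2/2}$, if $\phi^{(k)} = (-1)^k \phi\, h_k$ then $\phi^{(k+1)} = (-1)^k(\phi' h_k + \phi h_k') = (-1)^{k+1}\phi\,(y h_k - h_k') = (-1)^{k+1}\phi\, h_{k+1}$. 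With that line added, the proof is complete and self-contained.
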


\begin{proof}[Proof 3 of Proposition~\ref{prop:agn-components}]
    We may use Proposition~\ref{prop:hermite-gf} to expand $L_n$ in the Hermite polynomials directly:
    \begin{align*}
        L_n(\bY)
        &= \Ex_{\bX \sim \sP_n}\left[\exp\left(\la \bX, \bY \ra - \frac{1}{2}\|\bX\|^2\right)\right] \\
        &= \Ex_{\bX \sim \sP_n}\left[\,\prod_{i = 1}^N\left(\sum_{k = 0}^\infty\frac{1}{k!}X_i^k h_k(Y_i)\right)\right] \\
        &= \sum_{\bm\alpha \in \NN^N} \frac{1}{\prod_{i = 1}^N \alpha_i!}\, \EE_{\bX \sim \sP_n}\left[\,\prod_{i = 1}^N X_i^{\alpha_i}\right] H_{\bm\alpha}(\bm Y).
    \end{align*}
    Comparing with the expansion \eqref{eq:agn-Ln-expansion} then gives the result.
\end{proof}

Now that we have the simple form~\eqref{eq:gaussian-2nd-low} for the norm of the LDLR, it remains to investigate its convergence or divergence (as $n \to \infty$) using problem-specific statistics of $\bX$. In the next section we give some examples of how to carry out this analysis.

\section{Examples: Spiked Matrix and Tensor Models}
\label{sec:examples}

In this section, we perform the low-degree analysis for a particular important case of the additive Gaussian model: the \emph{order-$p$ spiked Gaussian tensor model}, also referred to as the \emph{tensor PCA (principal component analysis)} problem. This model was introduced by \cite{RM-tensor} and has received much attention recently. The special case $p=2$ of the spiked tensor model is the so-called \emph{spiked Wigner matrix model} which has been widely studied in random matrix theory, statistics, information theory, and statistical physics; see \cite{leo-survey} for a survey.

In concordance with prior work, our low-degree analysis of these models illustrates two representative phenomena: the spiked Wigner matrix model exhibits a sharp computational phase transition, whereas the spiked tensor model (with $p \ge 3$) has a ``soft'' tradeoff between statistical power and runtime which extends through the subexponential-time regime. A low-degree analysis of the spiked tensor model has been carried out previously in \cite{sos-hidden,sam-thesis}; here we give a sharper analysis that more precisely captures the power of subexponential-time algorithms.

In Section~\ref{sec:spiked-tensor}, we carry out our low-degree analysis of the spiked tensor model. In Section~\ref{sec:spiked-matrix}, we devote additional attention to the special case of the spiked Wigner model, giving a refined analysis that captures its sharp phase transition and applies to a variety of distributions of ``spikes.''

\subsection{The Spiked Tensor Model}
\label{sec:spiked-tensor}

We begin by defining the model.

\begin{definition}
An $n$-dimensional order-$p$ \emph{tensor} $\bT \in (\RR^n)^{\otimes p}$ is a multi-dimensional array with $p$ dimensions each of length $n$, with entries denoted $T_{i_1,\ldots,i_p}$ where $i_j \in [n]$. For a vector $\bx \in \RR^n$, the \emph{rank-one tensor} $\bx^{\otimes p} \in (\RR^n)^{\otimes p}$ has entries $(\bx^{\otimes p})_{i_1,\ldots,i_p} = x_{i_1} x_{i_2} \cdots x_{i_p}$.
\end{definition}

\begin{definition}[Spiked Tensor Model]
\label{def:spiked-tensor}
Fix an integer $p \ge 2$. The order-$p$ \emph{spiked tensor model} is the additive Gaussian noise model (Definition~\ref{def:agn}) with $\bX = \lambda \bx^{\otimes p}$, where $\lambda = \lambda(n) > 0$ is a signal-to-noise parameter and $\bx \in \RR^n$ (the ``spike'') is drawn from some probability distribution $\sX_n$ over $\RR^n$ (the ``prior''), normalized so that $\|\bx\|^2 \to n$ in probability as $n \to \infty$. In other words:
\begin{itemize}
    \item Under $\PP_n$, observe $\bY = \lambda \bx^{\otimes p} + \bZ$.
    \item Under $\QQ_n$, observe $\bY = \bZ$.
\end{itemize}
Here, $\bZ$ is a tensor with i.i.d.\ entries distributed as $\sN(0,1)$.\footnote{This model is equivalent to the more standard model in which the noise is symmetric with respect to permutations of the indices; see Appendix~\ref{app:symm}.}
\end{definition}

\noindent Throughout this section we will focus for the sake of simplicity on the Rademacher spike prior, where $\bx$ has i.i.d.\ entries $x_i \sim \Unif(\{\pm 1\})$. We focus on the problem of strongly distinguishing $\PP_n$ and $\QQ_n$ (see Definition~\ref{def:stat-ind}), but, as is typical for high-dimensional problems, the problem of estimating $\bx$ seems to behave in essentially the same way (see Section~\ref{sec:extensions}). 

We first state our results on the behavior of the LDLR for this model.

\begin{theorem}\label{thm:tensor-lowdeg}
Consider the order-$p$ spiked tensor model with $\bx$ drawn from the Rademacher prior, $x_i \sim \Unif(\{\pm 1\})$ i.i.d.\ for $i \in [n]$. Fix sequences $D = D(n)$ and $\lambda = \lambda(n)$. For constants $0 < A_p < B_p$ depending only on $p$, we have the following.\footnote{Concretely, one may take $A_p = \frac{1}{\sqrt{2}} p^{-p/4-1/2}$ and $B_p = \sqrt{2} e^{p/2} p^{-p/4}$.}
\begin{enumerate}
    \item[(i)] If $\lambda \le A_p\, n^{-p/4} D^{(2-p)/4}$ for all sufficiently large $n$, then $\|L_n^{\le D}\| = O(1)$.
    \item[(ii)] If $\lambda \ge B_p\, n^{-p/4} D^{(2-p)/4}$ and $D \le \frac{2}{p}n$ for all sufficiently large $n$, and $D = \omega(1)$, then $\|L_n^{\le D}\| = \omega(1)$.
\end{enumerate}
(Here we are considering the limit $n \to \infty$ with $p$ held fixed, so $O(1)$ and $\omega(1)$ may hide constants depending on $p$.)
\end{theorem}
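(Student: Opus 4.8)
The plan is to start from the exact formula for the LDLR norm proved in Theorem~\ref{thm:agn-ldlr-norm}, specialized to the spiked tensor model. Here $\bX = \lambda \bx^{\otimes p}$ with $\bx \in \{\pm 1\}^n$, and two independent replicas $\bX^1, \bX^2$ give $\langle \bX^1, \bX^2 \rangle = \lambda^2 \langle \bx^1, \bx^2 \rangle^p$. Writing $n - 2k$ for $\langle \bx^1, \bx^2 \rangle$ where $k \sim \mathrm{Bin}(n, 1/2)$ counts the disagreements between $\bx^1$ and $\bx^2$, we get
\begin{equation*}
    \|L_n^{\le D}\|^2 = \Ex_{k \sim \mathrm{Bin}(n,1/2)}\left[ \sum_{d=0}^D \frac{1}{d!} \lambda^{2d} (n - 2k)^{pd} \right].
\end{equation*}
The task reduces to analyzing this double sum/expectation; the first step is to swap the order, so that for each fixed $d$ we need to understand $\mu_{pd} \colonequals \Ex[(n-2k)^{pd}] = \Ex_{\bx^1,\bx^2}[\langle \bx^1, \bx^2\rangle^{pd}]$, the $(pd)$-th moment of a sum of $n$ i.i.d.\ Rademachers. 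Standard moment bounds for sums of independent $\pm 1$ variables give, for $m$ even, $\mu_m \le (Cmn)^{m/2}$ for an absolute constant (and more precisely $\mu_m \lesssim (mn/e)^{m/2}$ up to lower-order factors), and also the matching lower bound $\mu_m \ge (mn)^{m/2} \cdot (\text{something like } e^{-m/2})$ in the regime $m \le n$; these are the two quantitative inputs that will produce the constants $A_p$ and $B_p$.

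For part (i), the plan is to plug the upper bound $\mu_{pd} \le (c\, pd\, n)^{pd/2}$ into the sum, yielding a bound of the form $\sum_{d=0}^D \frac{1}{d!} \lambda^{2d} (c\,pd\,n)^{pd/2}$. Using $d! \ge (d/e)^d$, the $d$-th term is at most $\left( e \lambda^2 (c\,pd\,n)^{p/2} / d \right)^d = \left( e (cp)^{p/2} \lambda^2 n^{p/2} d^{p/2 - 1} \right)^d$. Since $d \le D$ and $p \ge 2$ means $d^{p/2-1} \le D^{p/2-1}$, each term is at most $r^d$ where $r = e(cp)^{p/2}\lambda^2 n^{p/2} D^{p/2-1}$; the hypothesis $\lambda \le A_p n^{-p/4} D^{(2-p)/4}$ is exactly calibrated so that $\lambda^2 n^{p/2} D^{(2-p)/2} \le A_p^2$, i.e. $r \le e (cp)^{p/2} A_p^2 \le 1/2$ for the stated choice of $A_p$. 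Then the geometric series sums to $O(1)$. One technical point to be careful about: the crude moment bound $\mu_m \le (Cmn)^{m/2}$ must be checked to hold for all $m$ including small $m$ and the edge cases $d=0,1$, but this is routine (e.g.\ via $\mu_m \le m^{m/2} n^{m/2}$ from hypercontractivity / Khintchine, or a direct computation).

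For part (ii), the approach is to lower-bound $\|L_n^{\le D}\|^2$ by a single well-chosen term of the sum — namely $d = D$ (or, if one wants robustness, $d$ around a constant fraction of $D$) — discarding all others since every term is nonnegative. So $\|L_n^{\le D}\|^2 \ge \frac{1}{D!}\lambda^{2D}\mu_{pD}$. Using $D! \le D^D$ and the lower bound $\mu_{pD} \ge (pDn)^{pD/2} e^{-pD/2}$ — valid because $pD \le 2n$, i.e.\ the moment order does not exceed $\Theta(n)$, which is precisely where the hypothesis $D \le \frac{2}{p}n$ enters — we get $\|L_n^{\le D}\|^2 \ge \left( \lambda^2 (pn)^{p/2} D^{p/2} e^{-p/2} / D \right)^D = \left( e^{-p/2} p^{p/2} \lambda^2 n^{p/2} D^{p/2-1}\right)^D$. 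The hypothesis $\lambda \ge B_p n^{-p/4}D^{(2-p)/4}$ makes the base at least $e^{-p/2}p^{p/2} B_p^2 =: C_p > 1$ for the stated $B_p$, so $\|L_n^{\le D}\|^2 \ge C_p^D \to \infty$ since $D = \omega(1)$. The main obstacle — really the only delicate point — is establishing the sharp \emph{lower} bound on the Rademacher moment $\mu_m = \Ex[(\sum_i \epsilon_i)^m]$ in the regime $m \lesssim n$, with the correct constant in the exponent (something like $(mn)^{m/2}$ up to a factor $c^m$): the clean route is to compare to a Gaussian (for which $\Ex[g^m] = (m-1)!! \, \sigma^m$ with $\sigma^2 = n$, giving $\sim (mn/e)^{m/2}$) via a CLT/Lindeberg-type argument, but making this quantitative and uniform over the needed range of $m$, while tracking constants carefully enough to pin down $A_p, B_p$, is the part requiring genuine care; everything else is bookkeeping with factorials and geometric series.
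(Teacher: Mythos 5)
Your part (i) is essentially the paper's argument: the paper bounds $\EE|\langle \bx^1,\bx^2\rangle|^k \le (2n)^{k/2}k\Gamma(k/2)$ via subgaussianity and runs a ratio test on consecutive terms, while you bound each term directly by $r^d$; these are the same calculation, and the calibration of $A_p$ so that $r\le 1/2$ is exactly what the paper does.

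Part (ii) has a genuine gap, in the one place you flagged as delicate. First, a smaller point: you cannot simply take the term $d=D$, because if $pD$ is odd then $\Ex[\langle\bx^1,\bx^2\rangle^{pD}]=0$ by symmetry and that term vanishes identically; you must take the largest $d\le D$ with $pd$ even (as the paper does) and then absorb the resulting loss into a $(2-o(1))^d$ factor, since the hypothesis on $\lambda$ is phrased in terms of $D$ rather than $d$. Second, and more seriously, the moment lower bound you want, $\mu_m \ge (mn)^{m/2}e^{-m/2}$, is the Gaussian value and is \emph{false} near the top of your range. At $m=2n$ the sum $\sum_i s_i$ is bounded by $n$ in absolute value, and a direct computation gives $\mu_{2n} = 2^{-n}\sum_j \binom{n}{j}(n-2j)^{2n} = n^{2n}\,2^{-n}\cdot e^{O(n e^{-4})}$, whereas your claimed bound is $n^{2n}(2/e)^n$ with $2/e > 1/2$. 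So the Rademacher moments genuinely drop below the Gaussian ones by a factor exponential in $m$ once $m\asymp n$, and no quantitative CLT/Lindeberg argument can rescue the constant you wrote; moreover making such an argument uniform over $m$ up to $\Theta(n)$ is far harder than necessary. The missing idea is an elementary counting bound: expanding $\mu_{2k}=\sum_{i_1,\dots,i_{2k}}\EE[s_{i_1}\cdots s_{i_{2k}}]$ and keeping only the terms in which each index appears exactly $0$ or $2$ times gives $\mu_{2k}\ge \binom{n}{k}\frac{(2k)!}{2^k} \ge (2kn)^k e^{-2k}$, i.e.\ $\mu_m \ge (mn)^{m/2}e^{-m}$, valid whenever $k\le n$ (which is precisely where the hypothesis $D\le \frac{2}{p}n$ enters). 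With this corrected bound your computation goes through verbatim but with base $e^{-p}p^{p/2}\lambda^2 n^{p/2}d^{p/2-1}$ instead of $e^{-p/2}p^{p/2}\cdots$, which is exactly what forces the stated constant $B_p=\sqrt{2}\,e^{p/2}p^{-p/4}$.
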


\noindent Before we prove this, let us interpret its meaning. If we take degree-$D$ polynomials as a proxy for $n^{\tilde\Theta(D)}$-time algorithms (as discussed in Section~\ref{sec:ldlr-conj}), our calculations predict that an $n^{O(D)}$-time algorithm exists when $\lambda \gg n^{-p/4} D^{(2-p)/4}$ but not when $\lambda \ll n^{-p/4} D^{(2-p)/4}$. (Here we ignore log factors, so we use $A \ll B$ to mean $A \le B/\mathrm{polylog}(n)$.) These predictions agree precisely with the previously established statistical-versus-computational tradeoffs in the spiked tensor model! It is known that polynomial-time distinguishing algorithms exist when $\lambda \gg n^{-p/4}$ \cite{RM-tensor,HSS-tensor,tensor-hom,sos-fast}, and sum-of-squares lower bounds suggest that there is no polynomial-time distinguishing algorithm when $\lambda \ll n^{-p/4}$ \cite{HSS-tensor,sos-hidden}. 

Furthermore, one can study the power of subexponential-time algorithms, i.e., algorithms of runtime $n^{n^\delta} = \exp(\tilde{O}(n^{\delta}))$ for a constant $\delta \in (0,1)$. Such algorithms are known to exist when $\lambda \gg n^{-p/4-\delta(p-2)/4}$ \cite{strongly-refuting,BGG,BGL,kikuchi}, matching our prediction.\footnote{Some of these results only apply to minor variants of the spiked tensor problem, but we do not expect this difference to be important.}
These algorithms interpolate smoothly between the polynomial-time algorithm which succeeds when $\lambda \gg n^{-p/4}$, and the exponential-time exhaustive search algorithm which succeeds when $\lambda \gg n^{(1-p)/2}$. (Distinguishing the null and planted distributions is information-theoretically impossible when $\lambda \ll n^{(1-p)/2}$ \cite{RM-tensor,PWB-tensor,tensor-phys,tensor-stat}, so this is indeed the correct terminal value of $\lambda$ for computational questions.) The tradeoff between statistical power and runtime that these algorithms achieve is believed to be optimal, and our results corroborate this claim. Our results are sharper than the previous low-degree analysis for the spiked tensor model \cite{sos-hidden,sam-thesis}, in that we pin down the precise constant $\delta$ in the subexponential runtime. (Similarly precise analyses of the tradeoff between subexponential runtime and statistical power have been obtained for CSP refutation \cite{strongly-refuting} and sparse PCA \cite{subexp-sparse}.)

We now begin the proof of Theorem~\ref{thm:tensor-lowdeg}. Since the spiked tensor model is an instance of the additive Gaussian model, we can apply the formula from Theorem~\ref{thm:agn-ldlr-norm}: letting $\bx^1,\bx^2$ be independent draws from $\sX_n$,
\begin{equation}\label{eq:tensor-L}
\|L_n^{\le D}\|^2 = \Ex_{\bx^1,\bx^2} \exp^{\le D}(\lambda^2 \langle \bx^1,\bx^2 \rangle^p) = \sum_{d=0}^{D} \frac{\lambda^{2d}}{d!} \Ex_{\bx^1,\bx^2} [\langle \bx^1,\bx^2 \rangle^{pd}].
\end{equation}
We will give upper and lower bounds on this quantity in order to prove the two parts of Theorem~\ref{thm:tensor-lowdeg}.

\subsubsection{Proof of Theorem~\ref{thm:tensor-lowdeg}: Upper Bound}

\begin{proof}[Proof of Theorem~\ref{thm:tensor-lowdeg}(i)]

We use the moment bound
\begin{equation}\label{eq:subg-mom}
\Ex_{\bx^1,\bx^2}[|\langle \bx^1,\bx^2 \rangle|^k] \le (2n)^{k/2} k \Gamma(k/2)
\end{equation}
for any integer $k \ge 1$. This follows from  $\langle \bx^1,\bx^2 \rangle$ being a \emph{subgaussian} random variable with variance proxy $n$ (see Appendix~\ref{app:subg} for details on this notion, and see Proposition~\ref{prop:subg-mom} for the bound~\eqref{eq:subg-mom}). Plugging this into~\eqref{eq:tensor-L},
\[ \|L_n^{\le D}\|^2 \le 1 + \sum_{d=1}^D \frac{\lambda^{2d}}{d!} (2n)^{pd/2}pd\,\Gamma(pd/2) =: 1 + \sum_{d=1}^D T_d. \]
Note that $T_1 = O(1)$ provided $\lambda = O(n^{-p/4})$ (which will be implied by~\eqref{eq:tensor-lam} below). Consider the ratio between successive terms:
\[ r_d := \frac{T_{d+1}}{T_d} = \frac{\lambda^2}{d+1} (2n)^{p/2} p \,\frac{\Gamma(p(d+1)/2)}{\Gamma(pd/2)}. \]
Using the bound $\Gamma(x+a)/\Gamma(x) \le (x+a)^a$ for all $a, x > 0$ (see Proposition~\ref{prop:gamma}), we find
\[ r_d \le \frac{\lambda^2}{d+1} (2n)^{p/2} p [p(d+1)/2]^{p/2} \le \lambda^2 p^{p/2+1} n^{p/2} (d+1)^{p/2-1}. \]
Thus if $\lambda$ is small enough, namely if
\begin{equation}\label{eq:tensor-lam}
\lambda \le \frac{1}{\sqrt{2}}\, p^{-p/4-1/2} n^{-p/4} D^{(2-p)/4},
\end{equation}
then $r_d \le 1/2$ for all $1 \le d < D$.
In this case, by comparing with a geometric sum we may bound $\|L_n^{\le D}\|^2 \le 1 + 2 T_1 = O(1)$.
\end{proof}

\subsubsection{Proof of Theorem~\ref{thm:tensor-lowdeg}: Lower Bound}
\label{sec:tensor-lower}

\begin{proof}[Proof of Theorem~\ref{thm:tensor-lowdeg}(ii)]
Note that $\langle \bx^1,\bx^2 \rangle = \sum_{i=1}^n s_i$ where $s_1, \dots, s_n$ are i.i.d.\ Rademacher random variables, so $\EE_{\bx^1,\bx^2}[\langle \bx^1,\bx^2 \rangle^{2k + 1}] = 0$, and
$$\Ex_{\bx^1,\bx^2}[\langle \bx^1,\bx^2 \rangle^{2k}] = \EE\left[\left(\sum_{i=1}^n s_i\right)^{2k}\right] = \sum_{i_1,i_2,\ldots,i_{2k} \in [n]} \EE[s_{i_1} s_{i_2} \cdots s_{i_{2k}}].$$
By counting only the terms $\EE[s_{i_1} s_{i_2} \cdots s_{i_{2k}}]$ in which each $s_i$ appears either 0 or 2 times, we have
\begin{equation}\label{eq:mom-bound}
\Ex_{\bx^1,\bx^2} [\langle \bx^1,\bx^2 \rangle^{2k}] \ge \binom{n}{k}\frac{(2k)!}{2^k}.
\end{equation}
Let $d$ be the largest integer such that $d \le D$ and $pd$ is even. By our assumption $D \le \frac{2}{p}n $, we then have $pd/2 \le n$.
We now bound $\|L_n^{\leq D}\|^2$ by only the degree-$pd$ term of~\eqref{eq:tensor-L}, and using the bounds $\binom{n}{k} \ge (n/k)^k$ (for $1 \le k \le n$) and $(n/e)^n \le n! \le n^n$, we can lower bound that term as follows:
\begin{align*}
\|L_n^{\le D}\|^2 &\ge \frac{\lambda^{2d}}{d!} \Ex_{\bx^1,\bx^2}[\langle \bx^1,\bx^2 \rangle^{pd}] \\
&\ge \frac{\lambda^{2d}}{d!} \binom{n}{pd/2} \frac{(pd)!}{2^{pd/2}} \\
&\ge \frac{\lambda^{2d}}{d^d} \left(\frac{2n}{pd}\right)^{pd/2} \frac{(pd/e)^{pd}}{2^{pd/2}} \\
&= \left(\lambda^2 e^{-p} p^{p/2} n^{p/2} d^{p/2-1} \right)^d.
\end{align*}
Now, if $\lambda$ is large enough, namely if
$$\lambda \ge \sqrt{2} e^{p/2} p^{-p/4} n^{-p/4} D^{(2-p)/4}$$
and $D = \omega(1)$, then $\|L_n^{\le D}\|^2 \ge (2-o(1))^d = \omega(1)$.
\end{proof}

\subsection{The Spiked Wigner Matrix Model: Sharp Thresholds}
\label{sec:spiked-matrix}

We now turn our attention to a more precise understanding of the case $p = 2$ of the spiked tensor model, which is more commonly known as the \emph{spiked Wigner matrix model}.
Our results from the previous section (specialized to $p=2$) suggest that if $\lambda \gg n^{-1/2}$ then there should be a polynomial-time distinguishing algorithm, whereas if $\lambda \ll n^{-1/2}$ then there should not even be a subexponential-time distinguishing algorithm (that is, no algorithm of runtime $\exp(n^{1-\varepsilon})$ for any $\varepsilon > 0$). In this section, we will give a more detailed low-degree analysis that identifies the precise value of $\lambda \sqrt{n}$ at which this change occurs. This type of sharp threshold has been observed in various high-dimensional inference problems; another notable example is the Kesten-Stigum transition for community detection in the stochastic block model \cite{block-model-1,block-model-2,MNS-rec,massoulie,MNS-proof}. It was first demonstrated by \cite{HS-bayesian} that the low-degree method can capture such sharp thresholds.

To begin, we recall the problem setup. Since the interesting regime is $\lambda = \Theta(n^{-1/2})$, we define $\hat\lambda = \lambda \sqrt{2n}$ and take $\hat\lambda$ to be constant (not depending on $n$). With this notation, the spiked Wigner model is as follows:
\begin{itemize}
    \item Under $\PP_n$, observe $\bY = \frac{\hat\lambda}{\sqrt{2n}} \bx\bx^\top + \bZ$ where $\bx \in \RR^n$ is drawn from $\sX_n$.
    \item Under $\QQ_n$, observe $\bY = \bZ$.
\end{itemize}
Here $\bZ$ is an $n \times n$ random matrix with i.i.d.\ entries distributed as $\sN(0,1)$. (This asymmetric noise model is equivalent to the more standard symmetric one; see Appendix~\ref{app:symm}.) We will consider various spike priors $\sX_n$, but require the following normalization.

\begin{assumption}\label{asm:spike-family}
The spike prior $(\sX_n)_{n \in \NN}$ is normalized so that $\bx \sim \sX_n$ satisfies $\|\bx\|^2 \to n$ in probability as $n \to \infty$.
\end{assumption}

\subsubsection{The Canonical Distinguishing Algorithm: PCA}
\label{sec:pca}

There is a simple reference algorithm for testing in the spiked Wigner model, namely \emph{PCA (principal component analysis)}, by which we simply mean thresholding the largest eigenvalue of the (symmetrized) observation matrix.

\begin{definition}
    The \emph{PCA test} for distinguishing $\PPP$ and $\QQQ$ is the following statistical test, computable in polynomial time in $n$.
    Let $\overline{\bY} \colonequals (\bY + \bY^\top) / \sqrt{2n} = \frac{\hat\lambda}{n} \bx\bx^\top + \bW$, where $\bW = (\bZ + \bZ^\top) / \sqrt{2n}$ is a random matrix with the GOE distribution.\footnote{Gaussian Orthogonal Ensemble (GOE): $\bW$ is a symmetric $n \times n$ matrix with entries $W_{ii} \sim \sN(0,2/n)$ and $W_{ij} = W_{ji} \sim \sN(0,1/n)$, independently.}
    Then, let
    \begin{equation*}
        f_{\hat\lambda}^{\mathsf{PCA}}(\bY) \colonequals \left\{\begin{array}{lcl} \tp & : & \lambda_{\max}(\overline{\bY}) > t(\hat\lambda)  \\ \tq & : & \lambda_{\max}(\overline{\bY}) \leq t(\hat\lambda) \end{array}\right\}
    \end{equation*}
    where the threshold is set to  $t(\hat\lambda) \colonequals 2 + (\hat\lambda + \hat\lambda^{-1} - 2) / 2$.
\end{definition}
\noindent
The theoretical underpinning of this test is the following seminal result from random matrix theory, the analogue for Wigner matrices of the celebrated ``BBP transition'' \cite{BBP}.
\begin{theorem}[\cite{FP-bbp,BGN}]
    \label{thm:wig-bbp}
    Let $\hat\lambda$ be constant (not depending on $n$). Let $\overline{\bY} = \frac{\hat\lambda}{n} \bx\bx^\top + \bW$ with $\bW \sim \GOE(n)$ and arbitrary $\bx \in \RR^n$ with $\|\bx\|^2 = n$.
    \begin{itemize}
        \item If $\hat\lambda \leq 1$, then $\lambda_{\max}(\overline{\bY}) \to 2$ as $n \to \infty$ almost surely, and $\la \bv_{\max}(\overline{\bY}), \bx/\sqrt{n} \ra^2 \to 0$ almost surely (where $\lambda_{\max}$ denotes the largest eigenvalue and $\bv_{\max}$ denotes the corresponding unit-norm eigenvector).
        \item If $\hat\lambda > 1$, then $\lambda_{\max}(\overline{\bY}) \to \hat\lambda + \hat\lambda^{-1} > 2$ as $n \to \infty$ almost surely, and $\la \bv_{\max}(\overline{\bY}), \bx/\sqrt{n} \ra^2 \to 1 - \hat\lambda^{-2}$ almost surely.
    \end{itemize}
\end{theorem}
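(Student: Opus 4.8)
The statement to prove is Theorem~\ref{thm:wig-bbp}, the BBP-type transition for spiked Wigner matrices. This is a classical result from random matrix theory attributed to \cite{FP-bbp,BGN}, so the proof is a matter of assembling standard tools rather than inventing new ones. Let me sketch the approach.

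\medskip

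The plan is to treat $\overline{\bY} = \frac{\hat\lambda}{n}\bx\bx^\top + \bW$ as a rank-one additive perturbation of a GOE matrix $\bW$, and to control the top eigenvalue and eigenvector via the resolvent $G(z) = (z\bm I - \bW)^{-1}$. First I would record the relevant spectral facts about $\bW \sim \GOE(n)$: its empirical spectral distribution converges to the semicircle law on $[-2,2]$, its largest eigenvalue $\lambda_{\max}(\bW) \to 2$ almost surely, and for any fixed unit vector $\bu$ (in particular $\bu = \bx/\sqrt n$, which is independent of $\bW$ or can be conditioned on since the bound is deterministic in $\bx$) the quadratic form of the resolvent concentrates: $\bu^\top G(z)\bu \to m(z)$ almost surely for $z$ outside $[-2,2]$, where $m(z) = \frac{-z+\sqrt{z^2-4}}{2}$ is the Stieltjes transform of the semicircle law. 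This last fact follows from standard concentration of quadratic forms (Hanson--Wright) together with convergence of the normalized trace $\frac1n\operatorname{tr} G(z) \to m(z)$, plus the rotational invariance of the GOE which makes $\bu^\top G(z)\bu$ distributionally equal to a uniformly random diagonal-type average.

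\medskip

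The main step is the \textbf{eigenvalue equation via the secular/determinant identity}. Any eigenvalue $z$ of $\overline{\bY}$ that is not an eigenvalue of $\bW$ must satisfy $\det(z\bm I - \bW - \frac{\hat\lambda}{n}\bx\bx^\top) = 0$, and factoring out $\det(z\bm I - \bW)$ and using the matrix determinant lemma gives the scalar equation
\begin{equation*}
1 = \frac{\hat\lambda}{n}\, \bx^\top (z\bm I - \bW)^{-1} \bx = \hat\lambda \cdot \frac{\bx^\top G(z)\bx}{n} \longrightarrow \hat\lambda\, m(z),
\end{equation*}
using $\|\bx\|^2 = n$ and the resolvent concentration above, uniformly for $z \ge 2+\delta$. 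Since $m$ is continuous and strictly decreasing on $(2,\infty)$ with $m(2^+) = 1$ and $m(z)\to 0$ as $z\to\infty$, the equation $\hat\lambda\, m(z) = 1$ has a solution $z = \hat\lambda + \hat\lambda^{-1} > 2$ precisely when $\hat\lambda > 1$, and no solution in $(2+\delta,\infty)$ when $\hat\lambda \le 1$. Combining this with the fact that adding a rank-one perturbation can move at most one eigenvalue above $\lambda_{\max}(\bW) + o(1) = 2 + o(1)$ (eigenvalue interlacing for rank-one updates), one concludes $\lambda_{\max}(\overline{\bY}) \to \max(2, \hat\lambda + \hat\lambda^{-1})$ almost surely in both regimes. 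For the eigenvector overlap when $\hat\lambda > 1$: writing the eigenvector relation $(\lambda_{\max}(\overline{\bY})\bm I - \bW)\bv_{\max} = \frac{\hat\lambda}{n}(\bx^\top \bv_{\max})\bx$, projecting onto $\bx/\sqrt n$, and differentiating the secular equation, one gets $\la \bv_{\max}, \bx/\sqrt n\ra^2 \to \frac{m(z_\star)}{-\hat\lambda\, m'(z_\star)}$ evaluated at $z_\star = \hat\lambda+\hat\lambda^{-1}$; plugging in $m(z_\star) = 1/\hat\lambda$ and $m'(z_\star) = \frac{-1}{\hat\lambda^2 - 1}$ (obtained by implicit differentiation of $z = -1/m - m$) yields the stated value $1 - \hat\lambda^{-2}$. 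In the subcritical case $\hat\lambda \le 1$, the top eigenvector is delocalized relative to $\bx$, which can be seen from the same resolvent estimate: the overlap is governed by the residue of $\bx^\top G(z)\bx/n$ at $z = \lambda_{\max}(\overline{\bY}) \to 2$, which vanishes because $m$ is the Stieltjes transform of an absolutely continuous measure with no atom at $2$, so $\la \bv_{\max}, \bx/\sqrt n\ra^2 \to 0$.

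\medskip

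The main obstacle, and the step requiring the most care, is making the resolvent convergence $\bx^\top G(z)\bx / n \to m(z)$ hold \emph{uniformly} over the relevant range of $z$ and then feeding it rigorously into the secular equation to pin down a single eigenvalue rather than merely bounding the spectrum---in particular ruling out that $\lambda_{\max}(\overline{\bY})$ sneaks to $2+o(1)$ from below in the supercritical case, or that spurious solutions appear right at the edge. Handling the edge behavior (distances $o(1)$ from $2$) genuinely requires edge universality / rigidity of GOE eigenvalues rather than just bulk convergence, though for the \emph{almost sure limit} statements (as opposed to fluctuations) the classical arguments of \cite{FP-bbp,BGN} suffice and I would simply cite them. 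Since this theorem is quoted from the literature, in the write-up I would either give the short self-contained resolvent argument above for the eigenvalue claim and cite \cite{FP-bbp,BGN,BBP} for the full statement including eigenvector overlaps, or simply cite the references outright, as the paper does; it is used here only as a black box to justify the PCA test threshold $t(\hat\lambda)$.
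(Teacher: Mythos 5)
The paper does not prove Theorem~\ref{thm:wig-bbp} at all: it is imported as a black box from \cite{FP-bbp,BGN} to justify the PCA test, so there is no in-paper argument to compare against. Your sketch is the standard proof from those references (in particular the determinant-lemma/secular-equation route of Benaych-Georges--Nadakuditi): the reduction of outlier eigenvalues to $1 = \hat\lambda\, \bx^\top(z\bm I - \bW)^{-1}\bx / n$, concentration of the resolvent quadratic form to the Stieltjes transform of the semicircle law, solving $\hat\lambda\, m(z)=1$ to get $z_\star = \hat\lambda + \hat\lambda^{-1}$, interlacing to rule out more than one outlier, and the overlap formula $m(z_\star)/(-\hat\lambda\, m'(z_\star)) = 1-\hat\lambda^{-2}$. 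All of this is correct in outline, and you are right that the genuinely delicate points (uniformity of the resolvent convergence near the edge, and the subcritical eigenvector delocalization) are exactly where the cited works do the real work. One cosmetic slip: you write $m(z) = \frac{-z+\sqrt{z^2-4}}{2}$, which is the Stieltjes transform with the convention $\int \frac{d\rho(x)}{x-z}$ and is \emph{negative} for $z>2$; the quadratic form of $(z\bm I - \bW)^{-1}$ converges to $\frac{z-\sqrt{z^2-4}}{2}$, and it is this (positive, decreasing, equal to $1$ at $z=2^+$) function that your subsequent computations correctly use, so only the displayed formula and the line ``$z=-1/m-m$'' need their signs fixed.
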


\noindent Thus, the PCA test exhibits a sharp threshold: it succeeds when $\hat\lambda > 1$, and fails when $\hat\lambda \le 1$. (Furthermore, the leading eigenvector achieves non-trivial estimation of the spike $\bx$ when $\hat\lambda > 1$ and fails to do so when $\hat\lambda \le 1$.)

\begin{corollary}
    For any $\hat\lambda > 1$ and any spike prior family $(\sX_n)_{n \in \NN}$ valid per Assumption~\ref{asm:spike-family}, $f_{\hat\lambda}^{\mathsf{PCA}}$ is a polynomial-time statistical test strongly distinguishing $\PPP_{\lambda}$ and $\QQQ$.
\end{corollary}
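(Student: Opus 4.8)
The plan is to verify two things: that $f_{\hat\lambda}^{\mathsf{PCA}}$ runs in polynomial time, and that it strongly distinguishes $\PPP_\lambda$ and $\QQQ$. The first is immediate: forming $\overline{\bY} = (\bY + \bY^\top)/\sqrt{2n}$ and computing $\lambda_{\max}(\overline{\bY})$ to within additive error $o(1)$ (by power iteration, or any standard symmetric-eigenvalue routine) takes $\mathrm{poly}(n)$ time, and this precision suffices since $t(\hat\lambda)$ lies a constant distance from the relevant limiting eigenvalues. For the distinguishing claim, observe the elementary fact that $t(\hat\lambda) = 2 + (\hat\lambda + \hat\lambda^{-1} - 2)/2$ is exactly the midpoint of the interval $[\,2,\ \hat\lambda + \hat\lambda^{-1}\,]$, so for $\hat\lambda > 1$ (hence $\hat\lambda + \hat\lambda^{-1} > 2$) we have $2 < t(\hat\lambda) < \hat\lambda + \hat\lambda^{-1}$. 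It therefore suffices to show that $\lambda_{\max}(\overline{\bY}) \to 2$ in probability under $\QQ_n$ and that $\lambda_{\max}(\overline{\bY}) > t(\hat\lambda)$ with high probability under $\PP_n$; both follow from Theorem~\ref{thm:wig-bbp}.

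Under $\QQ_n$ we have $\overline{\bY} = \bW \sim \GOE(n)$, and the classical edge-of-the-spectrum result for Wigner matrices gives $\lambda_{\max}(\bW) \to 2 < t(\hat\lambda)$ almost surely, so $f_{\hat\lambda}^{\mathsf{PCA}}$ outputs $\tq$ with high probability. Under $\PP_n$ we have $\overline{\bY} = \frac{\hat\lambda}{n}\bx\bx^\top + \bW$ with $\bW \sim \GOE(n)$ independent of $\bx \sim \sX_n$; if $\|\bx\|^2 = n$ held exactly, Theorem~\ref{thm:wig-bbp} would immediately yield $\lambda_{\max}(\overline{\bY}) \to \hat\lambda + \hat\lambda^{-1} > t(\hat\lambda)$. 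The one subtlety — and the only real obstacle — is that Assumption~\ref{asm:spike-family} guarantees only $\|\bx\|^2/n \to 1$ in probability rather than exact normalization.

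To bridge this gap I would use a positive-semidefinite monotonicity (sandwiching) argument. Using continuity of $\mu \mapsto \mu + \mu^{-1}$ together with $\hat\lambda + \hat\lambda^{-1} > t(\hat\lambda)$, fix a constant $\hat\lambda_-$ with $1 < \hat\lambda_- < \hat\lambda$ and $\hat\lambda_- + \hat\lambda_-^{-1} > t(\hat\lambda)$. On the event $E_n = \{\|\bx\|^2 \ge (\hat\lambda_-/\hat\lambda)\,n\}$, which has probability tending to $1$, set $\bx' = \sqrt{n}\,\bx/\|\bx\|$, so $\|\bx'\|^2 = n$ and $\frac{\hat\lambda}{n}\bx\bx^\top = \frac{\hat\lambda\,\|\bx\|^2/n}{n}\,\bx'\bx'^\top \succeq \frac{\hat\lambda_-}{n}\bx'\bx'^\top$; hence by eigenvalue monotonicity $\lambda_{\max}(\overline{\bY}) \ge \lambda_{\max}\big(\frac{\hat\lambda_-}{n}\bx'\bx'^\top + \bW\big)$ on $E_n$. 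Since $\bx'$ is a function of $\bx$ and hence independent of $\bW$, conditioning on $\bx'$ reduces this last quantity to the deterministic-spike case of Theorem~\ref{thm:wig-bbp} with signal strength $\hat\lambda_- > 1$, which gives $\lambda_{\max}\big(\frac{\hat\lambda_-}{n}\bx'\bx'^\top + \bW\big) \to \hat\lambda_- + \hat\lambda_-^{-1}$ in probability; as this limit exceeds $t(\hat\lambda)$, we conclude $\lambda_{\max}(\overline{\bY}) > t(\hat\lambda)$ with high probability under $\PP_n$, so $f_{\hat\lambda}^{\mathsf{PCA}}$ outputs $\tp$ with high probability. Combined with the null-case analysis, this shows $f_{\hat\lambda}^{\mathsf{PCA}}$ strongly distinguishes $\PPP_\lambda$ and $\QQQ$, completing the proof.
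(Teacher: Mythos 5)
Your proof is correct and follows the same route the paper intends: the corollary is presented as an immediate consequence of Theorem~\ref{thm:wig-bbp}, with $t(\hat\lambda)$ chosen as the midpoint of $[2,\ \hat\lambda+\hat\lambda^{-1}]$ so that the two bullets of that theorem separate the null and planted models with high probability. Your PSD-sandwiching step to handle the fact that Assumption~\ref{asm:spike-family} only guarantees $\|\bx\|^2 \to n$ in probability (rather than exact normalization, as Theorem~\ref{thm:wig-bbp} requires) is a detail the paper leaves implicit, and you handle it correctly.
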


\noindent For some spike priors $(\sX_n)$, it is known that PCA is statistically optimal, in the sense that distinguishing (or estimating the spike) is information-theoretically impossible when $\hat\lambda < 1$.
These priors include the prior with $\bx$ drawn uniformly from the sphere of radius $\sqrt{n}$ and the priors with $\bx$ having i.i.d.\ $\sN(0, 1)$ or Rademacher (uniformly $\pm 1$) entries \cite{MRZ-spectral,DAM,BMVVX-pca,PWBM-pca}. For these priors, we thus have $\hat\lambda_{\mathsf{stat}} = \hat\lambda_{\mathsf{comp}} = 1$, and there is no stat-comp gap.

A different picture emerges for other spike priors, such as the \emph{sparse Rademacher prior}\footnote{In the sparse Rademacher prior, each entry of $\bx$ is nonzero with probability $\rho$ (independently), and the nonzero entries are drawn uniformly from $\{\pm 1/\sqrt{\rho}\}$.} with constant density $\rho = \Theta(1)$. If $\rho$ is smaller than a particular small constant (roughly $0.09$ \cite{mi-rank-one}), it is known that $\hat\lambda_{\mathsf{stat}} < 1$. More precisely, an exponential-time exhaustive search algorithm succeeds in part of the regime where PCA fails \cite{BMVVX-pca}. For any given $\rho$, the precise threshold $\hat\lambda_{\mathsf{stat}}$ can be computed using the \emph{replica-symmetric formula} from statistical physics (see, e.g., \cite{LKZ-mmse,LKZ-sparse,mi-rank-one,BDMKLZ-spiked,LM-symm,finite-size,replica-short,detection-wig,tensor-stat}, or \cite{leo-survey} for a survey). However, for any constant $\rho$, it is believed that $\hat\lambda_{\mathsf{comp}} = 1$, i.e., that no \emph{polynomial-time} algorithm can ``beat PCA.'' This has been conjectured in the same statistical physics literature based on failure of the \emph{approximate message passing (AMP)} algorithm \cite{LKZ-mmse,LKZ-sparse,mi-rank-one}.

We will next give a low-degree analysis corroborating that conjecture: for a large class of spike priors (including the sparse Rademacher prior with constant $\rho$), our predictions suggest that any distinguishing algorithm requires nearly-exponential time whenever $\hat\lambda < 1$.
One interesting case we will \emph{not} cover is that of the sparse Rademacher prior with $\rho = o(1)$. In such models, there actually \emph{are} subexponential-time algorithms that succeed for some $\hat\lambda < 1$; these are described in \cite{subexp-sparse} along with matching lower bounds using the low-degree method. Furthermore, there are polynomial-time algorithms that can beat the PCA threshold once $\rho \lesssim 1/\sqrt{n}$; this is the much-studied ``sparse PCA'' regime (see, e.g., \cite{JL04,JL09,AW-sparse,BR-sparse,KNV,DM-sparse,subexp-sparse}).

\subsubsection{Low-Degree Analysis: Informally, with the ``Gaussian Heuristic''}
\label{sec:spiked-wigner-gaussian-heuristic}

We first give a heuristic low-degree analysis of the spiked Wigner model which suggests $\hat\lambda_{\mathsf{comp}} = 1$ (matching PCA) for sufficiently ``reasonable'' spike priors $(\sX_n)$. In the next section, we will state and prove a rigorous statement to this effect.

Recall from~\eqref{eq:tensor-L} the expression for the norm of the LDLR:
\begin{equation*}
\|L_n^{\le D}\|^2 = \sum_{d=0}^{D} \frac{\lambda^{2d}}{d!} \Ex_{\bx^1,\bx^2} [\langle \bx^1,\bx^2 \rangle^{2d}] = \sum_{d=0}^{D} \frac{1}{d!}\left(\frac{\hat\lambda^2}{2n}\right)^d \Ex_{\bx^1,\bx^2} [\langle \bx^1,\bx^2 \rangle^{2d}].
\end{equation*}

\noindent To predict $\hat\lambda_{\mathsf{comp}}$, it remains to determine whether $\|L_n^{\le D}\|$ converges or diverges as $n \to \infty$, as a function of $\hat\lambda$.
Recall that, per Assumption~\ref{asm:spike-family}, $\sX_n$ is normalized so that $\|\bx\|^2 \approx n$. Thus when, e.g., $\bx$ has i.i.d.\ entries, we may expect the following informal central limit theorem:
\begin{quote}
    ``When $\bx^1, \bx^2 \sim \sX_n$ independently, $\la \bx^1, \bx^2 \ra$ is distributed approximately as $\sN(0, n)$.''
\end{quote}
Assuming this heuristic applies to the first $2D$ moments of $\la \bx^1, \bx^2 \ra$, and recalling that the Gaussian moments are $\EE_{g \sim \sN(0,1)} [g^{2k}] = (2k-1)!! = \prod_{i = 1}^k (2i - 1)$, we may estimate
\begin{equation*}
    \|L_n^{\leq D}\|^2 \approx \sum_{d=0}^{D} \frac{1}{d!}\left(\frac{\hat\lambda^2}{2n}\right)^d \Ex_{g \sim \sN(0, n)} [g^{2d}] = \sum_{d=0}^{D} \frac{1}{d!}\left(\frac{\hat\lambda^2}{2n}\right)^d n^d(2d - 1)!! =: \sum_{d=0}^D T_d.
\end{equation*}
Imagine $D$ grows slowly with $n$ (e.g., $D \approx \log n$) in order to predict the power of polynomial-time algorithms. The ratio of consecutive terms above is $$\frac{T_{d+1}}{T_d} = \hat\lambda^2\cdot \frac{2d+1}{2(d+1)} \approx \hat\lambda^2,$$
suggesting that $\|L_n^{\le D}\|$ should diverge if $\hat\lambda > 1$ and converge if $\hat\lambda < 1$.

While this style of heuristic analysis is often helpful for guessing the correct threshold, this type of reasoning can break down if $D$ is too large or if $\bx$ is too sparse. In the next section, we therefore give a rigorous analysis of $\|L_n^{\le D}\|$.

\subsubsection{Low-Degree Analysis: Formally, with Concentration Inequalities}
\label{sec:wig-bound-L}

We now give a rigorous proof that $\|L_n^{\leq D}\| = O(1)$ when $\hat\lambda < 1$ (and $\|L_n^{\leq D}\| = \omega(1)$ when $\hat\lambda > 1$), provided the spike prior is ``nice enough.'' Specifically, we require the following condition on the prior.

\begin{definition}\label{def:local-chernoff}
A spike prior $(\sX_n)_{n \in \NN}$ admits a \emph{local Chernoff bound} if for any $\eta > 0$ there exist $\delta > 0$ and $C > 0$ such that for all $n$,
\begin{equation*}
\Pr\left\{|\la \bx^1,\bx^2 \ra| \ge t\right\} \le C \exp\left(-\frac{1}{2n}(1-\eta)t^2\right) \quad \text{for all } t \in [0,\delta n]
\end{equation*}
where $\bx^1,\bx^2$ are drawn independently from $\sX_n$.
\end{definition}
\noindent For instance, any prior with i.i.d.\ \emph{subgaussian} entries admits a local Chernoff bound; see Proposition~\ref{prop:local-chernoff} in Appendix~\ref{app:subg}. This includes, for instance, the sparse Rademacher prior with any constant density $\rho$. The following is the main result of this section, which predicts that for this class of spike priors, any algorithm that beats the PCA threshold requires nearly-exponential time.

\begin{theorem}\label{thm:wig-bound-L}
Suppose $(\sX_n)_{n \in \NN}$ is a spike prior that (i) admits a local Chernoff bound, and (ii) has then $\|\bx\|^2 \le \sqrt{2}n$ almost surely if $\bx \sim \sX_n$. Then, for the spiked Wigner model with $\hat\lambda < 1$ and any $D = D(n) = o(n/\log n)$, we have $\|L_n^{\leq D}\| = O(1)$ as $n \to \infty$.
\end{theorem}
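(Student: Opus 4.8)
The plan is to start from the closed form for $\|L_n^{\le D}\|^2$ and reduce the whole problem to controlling the moments of $\la \bx^1,\bx^2\ra$. Since the spiked Wigner model is the $p=2$ case of the spiked tensor model, the formula~\eqref{eq:tensor-L} together with $\lambda = \hat\lambda/\sqrt{2n}$ gives
\[
\|L_n^{\le D}\|^2 = \sum_{d=0}^{D} \frac{1}{d!}\left(\frac{\hat\lambda^2}{2n}\right)^d \Ex_{\bx^1,\bx^2}\!\left[\la \bx^1,\bx^2 \ra^{2d}\right].
\]
The heuristic of Section~\ref{sec:spiked-wigner-gaussian-heuristic} predicts $\Ex[\la\bx^1,\bx^2\ra^{2d}] \approx n^d (2d-1)!!$, which would make this a convergent geometric series of ratio $\approx \hat\lambda^2 < 1$. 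The subtlety — and the reason the hypotheses are phrased as they are — is that this Gaussian estimate is valid only in the \emph{bulk} of the distribution of $\la\bx^1,\bx^2\ra$; one cannot simply invoke a global subgaussian moment bound such as~\eqref{eq:subg-mom}, since for the priors of interest (e.g.\ sparse Rademacher with constant $\rho$) the inner product $\la\bx^1,\bx^2\ra$ is genuinely not subgaussian with variance proxy $n$. The local Chernoff bound of Definition~\ref{def:local-chernoff} is exactly what is needed to push a truncation argument through.

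So the main step will be: fix $\eta > 0$ small enough that $\hat\lambda^2/(1-\eta) < 1$, let $\delta, C$ be the constants from the local Chernoff bound for this $\eta$, set $W \colonequals |\la\bx^1,\bx^2\ra|$ (which is $\le \|\bx^1\|\,\|\bx^2\| \le \sqrt 2\, n$ almost surely, by hypothesis (ii)), and split each moment as $\Ex[W^{2d}] = \Ex[W^{2d}\One\{W \le \delta n\}] + \Ex[W^{2d}\One\{W > \delta n\}]$. For the bulk part I would write $\Ex[W^{2d}\One\{W\le\delta n\}] \le \int_0^{\delta n} 2d\,t^{2d-1}\Pr\{W>t\}\,dt$, insert the local Chernoff bound (valid on all of $[0,\delta n]$), extend the integral to $[0,\infty)$, and evaluate the resulting Gamma integral to get $\Ex[W^{2d}\One\{W\le\delta n\}] \le C\big(\tfrac{2n}{1-\eta}\big)^d d!$; summing the associated terms of $\|L_n^{\le D}\|^2$ then gives $C\sum_{d\ge 0}\big(\hat\lambda^2/(1-\eta)\big)^d = O(1)$. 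For the tail part I would use the deterministic bound $W^{2d}\le (2n^2)^d$ on $\{W>\delta n\}$ and $\Pr\{W>\delta n\}\le C\exp(-c_0 n)$ with $c_0 \colonequals \tfrac12(1-\eta)\delta^2 > 0$ (the Chernoff bound evaluated at $t = \delta n$), so that the associated terms sum to at most $C e^{-c_0 n}\sum_{d=0}^D \tfrac{(\hat\lambda^2 n)^d}{d!}$.

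The step I expect to be the real obstacle is controlling this last tail sum, and it is where $D = o(n/\log n)$ is used. A naive bound $\sum_{d=0}^D \tfrac{(\hat\lambda^2 n)^d}{d!} \le e^{\hat\lambda^2 n}$ is useless, because $c_0$ can be far smaller than $\hat\lambda^2$. Instead, since $D = o(n)$ we have $D < \hat\lambda^2 n$ for large $n$, so the terms $\tfrac{(\hat\lambda^2 n)^d}{d!}$ are increasing on $0\le d\le D$ and the sum is at most $(D+1)\tfrac{(\hat\lambda^2 n)^D}{D!} \le (D+1)\big(\tfrac{e\hat\lambda^2 n}{D}\big)^D$; taking logarithms and using $D\log n = o(n)$ (equivalently $D = o(n/\log n)$) bounds this by $\exp(o(n))$. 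Hence the tail contribution is $C\exp(-c_0 n + o(n)) = o(1)$, and adding it to the $O(1)$ bulk contribution proves $\|L_n^{\le D}\|^2 = O(1)$. (The parenthetical claim that $\|L_n^{\le D}\| = \omega(1)$ when $\hat\lambda > 1$ would instead follow by lower-bounding $\|L_n^{\le D}\|^2$ by a single well-chosen term and using the normalization $\|\bx\|^2 \approx n$, exactly as in the lower-bound half of Theorem~\ref{thm:tensor-lowdeg}(ii).)
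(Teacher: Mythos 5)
Your proposal is correct and follows essentially the same route as the paper's proof: the same decomposition of $\la\bx^1,\bx^2\ra$ into a bulk event $\{|\la\bx^1,\bx^2\ra|\le \delta n\}$ handled by integrating the local Chernoff tail bound, and a large-deviation event handled by the deterministic bound $|\la\bx^1,\bx^2\ra|\le\sqrt2\,n$ together with the exponentially small probability, with $D=o(n/\log n)$ used in exactly the same way to control $\sum_{d\le D}(\hat\lambda^2 n)^d/d!$. The only cosmetic difference is that you carry out the truncation moment-by-moment and sum the resulting geometric series, whereas the paper splits the truncated exponential $\exp^{\le D}$ directly.
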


\begin{remark}
The upper bound $\|\bx\|^2 \le \sqrt{2}n$ is without loss of generality (provided $\|\bx\|^2 \to n$ in probability). This is because we can define a modified prior $\tilde \sX_n$ that draws $\bx \sim \sX_n$ and outputs $\bx$ if $\|\bx\|^2 \le \sqrt{2}n$ and $\bm 0$ otherwise. If $(\sX_n)_{n \in \NN}$ admits a local Chernoff bound then so does $(\tilde \sX_n)_{n \in \NN}$. And, if the spiked Wigner model is computationally hard with the prior $(\tilde \sX_n)_{n \in \NN}$, it is also hard with the prior $(\sX_n)_{n \in \NN}$, since the two differ with probability $o(1)$.
\end{remark}

Though we already know that a polynomial-time algorithm (namely PCA) exists when $\hat\lambda > 1$, we can check that indeed $\|L_n^{\le D}\| = \omega(1)$ in this regime. For the sake of simplicity, we restrict this result to the Rademacher prior.
\begin{theorem}\label{thm:wig-above}
Consider the spiked Wigner model with the Rademacher prior: $\bx$ has i.i.d.\ entries $x_i \sim \Unif(\{\pm 1\})$. If $\hat\lambda > 1$, then for any $D = \omega(1)$ we have $\|L_n^{\le D}\| = \omega(1)$.
\end{theorem}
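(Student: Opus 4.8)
The plan is to lower-bound $\|L_n^{\le D}\|^2$ by a single, carefully chosen term of the sum and show that term alone already diverges. Specializing~\eqref{eq:tensor-L} to $p=2$ and substituting $\lambda^2 = \hat\lambda^2/(2n)$, we have
\[
\|L_n^{\le D}\|^2 = \sum_{d=0}^{D} \frac{\lambda^{2d}}{d!}\Ex_{\bx^1,\bx^2}[\langle \bx^1,\bx^2\rangle^{2d}] = \sum_{d=0}^D \frac{1}{d!}\left(\frac{\hat\lambda^2}{2n}\right)^d \Ex_{\bx^1,\bx^2}[\langle \bx^1,\bx^2\rangle^{2d}].
\]
As in the proof of Theorem~\ref{thm:tensor-lowdeg}(ii), for the Rademacher prior $\langle \bx^1,\bx^2\rangle = \sum_{i=1}^n s_i$ with $s_i$ i.i.d.\ Rademacher, and counting only the perfect-matching contributions gives the moment lower bound~\eqref{eq:mom-bound}: $\Ex_{\bx^1,\bx^2}[\langle \bx^1,\bx^2\rangle^{2k}] \ge \binom{n}{k}\frac{(2k)!}{2^k}$.

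The one point that needs care is the range of $D$: since $D=\omega(1)$ is otherwise arbitrary (possibly much larger than $\sqrt n$), I would not use the $D$-th term directly but instead set $d = d(n) := \min\{D(n),\lfloor\sqrt n\rfloor\}$, so that $d\to\infty$, $d\le D$, and $d\le\sqrt n$. The last inequality is what makes the moment estimate tight: for $d\le\sqrt n$ one has $\prod_{i=1}^{d-1}(1-i/n) \ge 1-\frac{d^2}{2n}\ge\frac12$, hence $\binom{n}{d}\ge\frac12\frac{n^d}{d!}$. Plugging this into~\eqref{eq:mom-bound} and keeping only the degree-$d$ term,
\[
\|L_n^{\le D}\|^2 \ge \frac{1}{d!}\left(\frac{\hat\lambda^2}{2n}\right)^d \binom{n}{d}\frac{(2d)!}{2^d} \ge \frac12\left(\frac{\hat\lambda^2}{4}\right)^d\frac{(2d)!}{(d!)^2} = \frac12\left(\frac{\hat\lambda^2}{4}\right)^d\binom{2d}{d} \ge \frac12\cdot\frac{\hat\lambda^{2d}}{2d+1},
\]
where the final step uses $\binom{2d}{d}\ge 4^d/(2d+1)$ (the central binomial coefficient is the largest of the $2d+1$ coefficients summing to $4^d$). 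Since $\hat\lambda>1$ is a fixed constant and $d=d(n)\to\infty$, the right-hand side tends to $\infty$, which gives $\|L_n^{\le D}\|=\omega(1)$.

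I do not expect a real obstacle here: the argument is essentially the $p=2$ instance of the tensor lower-bound computation. The only subtlety is the uniformity over all $D=\omega(1)$, which forces the truncation $d=\min\{D,\lfloor\sqrt n\rfloor\}$ so that the elementary bound $\binom{n}{d}\ge\frac12 n^d/d!$ still applies; using instead the weaker $\binom{n}{d}\ge(n/d)^d$ (valid for all $d\le n$) together with Stirling would only yield a per-term bound of order $(\hat\lambda^2/e^2)^d$, which diverges just for $\hat\lambda>e$ and hence would be insufficient in the regime $1<\hat\lambda\le e$ that the theorem also needs to cover.
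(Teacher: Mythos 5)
Your proof is correct and follows essentially the same route as the paper's Appendix~\ref{app:wig-above}: isolate a single term of index $d$ with $\omega(1)\le d\le o(n)$, apply the matching-based moment bound~\eqref{eq:mom-bound}, and reduce to a central binomial coefficient estimate so that the term behaves like $\hat\lambda^{2d}$ up to polynomial factors in $d$. The only cosmetic differences are your cap $d\le\sqrt n$ with $\binom{n}{d}\ge\tfrac12 n^d/d!$ versus the paper's $n!/(n-d)!\ge(n-d)^d$ under $d=o(n)$, and $\binom{2d}{d}\ge 4^d/(2d+1)$ versus $4^d/(2\sqrt d)$; both yield the same conclusion.
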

\noindent The proof is a simple modification of the proof of Theorem~\ref{thm:tensor-lowdeg}(ii) in Section~\ref{sec:tensor-lower}; we defer it to Appendix~\ref{app:wig-above}. The remainder of this section is devoted to proving Theorem~\ref{thm:wig-bound-L}.

\begin{proof}[Proof of Theorem~\ref{thm:wig-bound-L}]
Starting from the expression for $\|L_n^{\le D}\|^2$ (see Theorem~\ref{thm:agn-ldlr-norm} and Remark~\ref{rem:taylor}), we split $\|L_n^{\leq D}\|^2$ into two terms, as follows:
$$\|L_n^{\leq D}\|^2 = \Ex_{\bx^1, \bx^2}\left[\exp^{\leq D}\left(\lambda^2\la \bx^1, \bx^2 \ra^2\right)\right]
=: R_1 + R_2,$$
where
\begin{align*}
R_1 &\colonequals \Ex_{\bx^1, \bx^2}\left[\One_{|\langle \bx^1,\bx^2 \rangle| \le \varepsilon n}\,\exp^{\leq D}\left(\lambda^2\la \bx^1, \bx^2 \ra^2\right)\right], \\
R_2 &\colonequals \Ex_{\bx^1, \bx^2}\left[\One_{|\langle \bx^1,\bx^2 \rangle| > \varepsilon n}\,\exp^{\leq D}\left(\lambda^2\la \bx^1, \bx^2 \ra^2\right)\right].
\end{align*}
Here $\varepsilon > 0$ is a small constant to be chosen later. We call $R_1$ the \emph{small deviations} and $R_2$ the \emph{large deviations}, and we will bound these two terms separately.
\paragraph{Bounding the large deviations.}
Using that $\|\bx\|^2 \le \sqrt{2}n$, that $\exp^{\le D}(t)$ is increasing for $t \ge 0$, and the local Chernoff bound (taking $\varepsilon$ to be a sufficiently small constant),
\begin{align*}
R_2 &\le \Pr\left\{|\langle \bx^1,\bx^2 \rangle| > \varepsilon n\right\} \exp^{\leq D}\left(2 \lambda^2 n^2\right)\\
&\le C \exp\left(-\frac{1}{3}\varepsilon^2 n\right) \sum_{d=0}^D \frac{(\hat\lambda^2 n)^d}{d!}
\intertext{and noting that the last term of the sum is the largest since $\hat\lambda^2 n > D$,}
&\le C \exp\left(-\frac{1}{3}\varepsilon^2 n\right) (D+1) \frac{(\hat\lambda^2 n)^D}{D!}\\
&= \exp\left[\log C - \frac{1}{3} \varepsilon^2 n + \log(D+1) + 2D \log \hat\lambda + D \log n - \log(D!)\right]\\
&= o(1)
\end{align*}
provided $D = o(n/\log n)$.

\paragraph{Bounding the small deviations.}

We adapt an argument from \cite{PWB-tensor}. Here we do not need to make use of the truncation to degree $D$ at all, and instead simply use the bound $\exp^{\le D}(t) \le \exp(t)$ for $t \ge 0$.
With this, we bound
\begin{align*}
R_1 &= \Ex_{\bx^1, \bx^2}\left[\One_{|\langle \bx^1,\bx^2 \rangle| \le \varepsilon n}\,\exp^{\leq D}\left(\lambda^2\la \bx^1, \bx^2 \ra^2\right)\right]\\
&\le \Ex_{\bx^1, \bx^2}\left[\One_{|\langle \bx^1,\bx^2 \rangle| \le \varepsilon n}\,\exp\left(\lambda^2\la \bx^1, \bx^2 \ra^2\right)\right]\\
&= \int_0^\infty \Pr\left\{\One_{|\langle \bx^1,\bx^2 \rangle| \le \varepsilon n}\,\exp\left(\lambda^2\la \bx^1, \bx^2 \ra^2\right) \ge u\right\} du\\
&= 1 + \int_1^\infty \Pr\left\{\One_{|\langle \bx^1,\bx^2 \rangle| \le \varepsilon n}\,\exp\left(\lambda^2\la \bx^1, \bx^2 \ra^2\right) \ge u\right\} du\\
&= 1 + \int_0^\infty \Pr\left\{\One_{|\langle \bx^1,\bx^2 \rangle| \le \varepsilon n}\,\langle \bx^1,\bx^2 \rangle^2 \ge t\right\} \lambda^2\exp\left(\lambda^2 t\right) dt \tag{where $\exp\left(\lambda^2 t\right) = u$}\\
&\le 1 + \int_0^\infty C \exp\left(-\frac{1}{2n}(1-\eta)t\right)  \lambda^2\exp\left(\lambda^2 t\right) dt  \tag{using the local Chernoff bound}\\
&\le 1 + \frac{C \hat\lambda^2}{2n} \int_0^\infty \exp\left(-\frac{1}{2n}(1 - \eta - \hat\lambda^2)t\right) dt\\
&= 1 + C \hat\lambda^2 (1 - \eta - \hat\lambda^2)^{-1} \tag{provided $\hat\lambda^2 < 1-\eta$}\\
&= O(1).
\end{align*}
Since $\hat\lambda < 1$, we can choose $\eta > 0$ small enough so that $\hat\lambda^2 < 1 - \eta$, and then choose $\varepsilon$ small enough so that the local Chernoff bound holds. (Here, $\eta$ and $\varepsilon$ depend on $\hat\lambda$ and the spike prior, but not on $n$.)
\end{proof}

\section{More on the Low-Degree Method}
\label{sec:ldlr-conj-2}

In this section, we return to the general considerations introduced in Section~\ref{sec:ldlr-conj} and describe some of the nuances in and evidence for the main conjecture underlying the low-degree method (Conjecture~\ref{conj:low-deg-informal}). Specifically, we investigate the question of what can be concluded (both rigorously and conjecturally) from the behavior of the low-degree likelihood ratio (LDLR) defined in Definition~\ref{def:LDLR}. We present conjectures and formal evidence connecting the LDLR to computational complexity, discussing various caveats and counterexamples along the way. 

In Section~\ref{sec:LDLR-poly}, we explore to what extent the $D$-LDLR controls whether or not degree-$D$ polynomials can distinguish $\PPP$ from $\QQQ$. Then, in Section~\ref{sec:LDLR-alg}, we explore to what extent the LDLR controls whether or not \emph{any} efficient algorithm can distinguish $\PPP$ and $\QQQ$.

\subsection{The LDLR and Thresholding Polynomials}
\label{sec:LDLR-poly}

Heuristically, since $\|L_n^{\le D}\|$ is the value of the $L^2$ optimization problem \eqref{eq:l2-opt-low}, we might expect the behavior of $\|L_n^{\le D}\|$ as $n \to \infty$ to dictate whether or not degree-$D$ polynomials can distinguish $\PPP$ from $\QQQ$: it should be possible to strongly distinguish (in the sense of Definition~\ref{def:stat-ind}, i.e., with error probabilities tending to $0$) $\PPP$ from $\QQQ$ by \emph{thresholding} a degree-$D$ polynomial (namely $L_n^{\le D}$) if and only if $\|L_n^{\le D}\| = \omega(1)$. We now discuss to what extent this heuristic is correct.

\begin{question}\label{q:thresh-poly-yes}
If $\|L_n^{\le D}\| = \omega(1)$, does this imply that it is possible to strongly distinguish $\PPP$ and $\QQQ$ by thresholding a degree-$D$ polynomial?
\end{question}

\noindent We have already mentioned (see Section~\ref{sec:contig}) a counterexample when $D = \infty$: there are cases where $\PPP$ and $\QQQ$ are not statistically distinguishable, yet $\|L_n\| \to \infty$ due to a rare ``bad'' event under $\PP_n$. Examples of this phenomenon are fairly common (e.g., \cite{BMNN-community,BMVVX-pca,PWBM-pca,PWB-tensor}). However, after truncation to only low-degree components, this issue seems to disappear. For instance, in sparse PCA, $\|L_n^{\le D}\| \to \infty$ only occurs when either (i) there actually is an $n^{\tilde{O}(D)}$-time distinguishing algorithm, or (ii) $D$ is ``unreasonably large,'' in the sense that there is a trivial $n^{t(n)}$-time exhaustive search algorithm and $D \gg t(n)$ \cite{subexp-sparse}. Indeed, we do not know any example of a natural problem where $\|L_n^{\le D}\|$ diverges spuriously for a ``reasonable'' value of $D$ (in the above sense), although one can construct unnatural examples by introducing a rare ``bad'' event in $\PP_n$. Thus, it seems that for natural problems and reasonable growth of $D$, the smoothness of low-degree polynomials regularizes $L_n^{\le D}$ in such a way that the answer to Question~\ref{q:thresh-poly-yes} is typically ``yes.'' This convenient feature is perhaps related to the probabilistic phenomenon of \emph{hypercontractivity}; see Appendix~\ref{app:hyp} and especially Remark~\ref{rem:low-dom}.

Another counterexample to Question~\ref{q:thresh-poly-yes} is the following. Take $\PPP$ and $\QQQ$ that are ``easy'' for degree-$D$ polynomials to distinguish, i.e., $\|L_n^{\le D}\| \to \infty$ and $\PPP, \QQQ$ can be strongly distinguished by thresholding a degree-$D$ polynomial. Define a new sequence of ``diluted'' planted measures $\PPP^\prime$ where $\PP^\prime_n$ samples from $\PP_n$ with probability $1/2$, and otherwise samples from $\QQ_n$. Letting $L'_n = d\PP'_n/d\QQ_n$, we have $\|(L'_n)^{\le D}\| \to \infty$, yet $\PPP'$ and $\QQQ$ cannot be strongly distinguished (even statistically). While this example is perhaps somewhat unnatural, it illustrates that a rigorous positive answer to Question~\ref{q:thresh-poly-yes} would need to restrict to $\PPP$ that are ``homogeneous'' in some sense.

Thus, while we have seen some artificial counterexamples, the answer to Question~\ref{q:thresh-poly-yes} seems to typically be ``yes'' for natural high-dimensional problems, so long as $D$ is not unreasonably large. We now turn to the converse question.

\begin{question}\label{q:thresh-poly-no}
If $\|L_n^{\le D}\| = O(1)$, does this imply that it is impossible to strongly distinguish $\PPP$ and $\QQQ$ by thresholding a degree-$D$ polynomial?
\end{question}

\noindent Here, we are able to give a positive answer in a particular formal sense.
The following result addresses the contrapositive of Question~\ref{q:thresh-poly-no}: it shows that distinguishability by thresholding low-degree polynomials implies exponential growth of the norm of the LDLR.

\begin{theorem}\label{thm:thresh-poly-hard}
Suppose $\QQ$ draws $\bY \in \RR^N$ with entries either i.i.d.\ $\sN(0,1)$ or i.i.d.\ $\Unif(\{\pm 1\})$. Let $\PP$ be any measure on $\RR^N$ that is absolutely continuous with respect to $\QQ$. Let $f: \RR^N \to \RR$ be a polynomial of degree $\le d$ satisfying
\begin{equation}\label{eq:strong-thresh}
\Ex_{\bY \sim \PP}[f(\bY)] \ge A \quad \text{and} \quad \QQ(|f(\bY)| \ge B) \le \delta
\end{equation}
for some $A > B > 0$ and some $\delta \le \frac{1}{2} \cdot 3^{-4kd}$. Then for any $k \in \NN$,
$$\|L^{\le 2kd}\| \ge \frac{1}{2}\left(\frac{A}{B}\right)^{2k}.$$
\end{theorem}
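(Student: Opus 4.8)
The plan is to build one degree-$(2kd)$ test polynomial out of $f$ and plug it into the variational characterization of $\|L^{\le 2kd}\|$ from \eqref{eq:l2-opt-low}. First I would rescale: replacing $f$ by $f/B$ does not affect $L = d\PP/d\QQ$ and reduces to the case $B=1$, where the hypotheses read $\Ex_{\bY\sim\PP}[f(\bY)]\ge A$ and $\QQ(|f(\bY)|\ge 1)\le\delta\le\tfrac12 3^{-4kd}$, and the goal becomes $\|L^{\le 2kd}\|\ge\tfrac12 A^{2k}$. (In the Boolean case I would also first replace $f$ by its multilinear reduction on $\{\pm1\}^N$, which has degree $\le d$ and changes no $\PP$- or $\QQ$-expectation since $\PP$ is absolutely continuous with respect to $\QQ$, which is supported on $\{\pm1\}^N$; and I will assume throughout that the expectations below are finite, as otherwise the right side of each estimate is $0$ or the left side of the conclusion is $+\infty$.) The test polynomial is $g\colonequals f^{2k}$, of degree $\le 2kd$. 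Since $\Ex_\QQ[g^2]>0$ (otherwise $f\equiv 0$ $\QQ$-a.s., hence $\PP$-a.s., contradicting $\Ex_\PP[f]\ge A>0$), normalizing $g$ to unit $L^2(\QQ)$-norm and feeding it into \eqref{eq:l2-opt-low} gives
\[
  \|L^{\le 2kd}\| \;\ge\; \frac{\Ex_{\bY\sim\PP}[f(\bY)^{2k}]}{\bigl(\Ex_{\bY\sim\QQ}[f(\bY)^{4k}]\bigr)^{1/2}}.
\]
It then remains to bound the numerator from below and the denominator from above.

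The numerator is immediate: since $t\mapsto t^{2k}$ is convex on $\RR$, Jensen's inequality gives $\Ex_{\bY\sim\PP}[f(\bY)^{2k}]\ge\bigl(\Ex_{\bY\sim\PP}[f(\bY)]\bigr)^{2k}\ge A^{2k}$.

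The denominator is the heart of the argument. Write $M\colonequals\Ex_{\bY\sim\QQ}[f(\bY)^{4k}]=\Ex_{\bY\sim\QQ}[g(\bY)^2]$. Splitting on $\{|g|<1\}$ and $\{|g|\ge1\}$ and applying Cauchy--Schwarz to the latter,
\[
  M \;\le\; 1 + \bigl(\Ex_{\bY\sim\QQ}[g(\bY)^4]\bigr)^{1/2}\,\bigl(\QQ(|g(\bY)|\ge1)\bigr)^{1/2}.
\]
Now $|g|\ge1\iff|f|\ge1$, so $\QQ(|g|\ge1)=\QQ(|f|\ge1)\le\delta$; and since $g$ has degree $\le 2kd$ and $\QQ$ is a product of standard Gaussians or of uniform signs, hypercontractivity (Appendix~\ref{app:hyp}) gives $\Ex_\QQ[g^4]\le 3^{4kd}\bigl(\Ex_\QQ[g^2]\bigr)^2=3^{4kd}M^2$. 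Hence $M\le 1+3^{2kd}\sqrt{\delta}\,M$, and because $\delta\le\tfrac12 3^{-4kd}$ forces $3^{2kd}\sqrt{\delta}\le 1/\sqrt2<1$ while $M<\infty$, rearranging yields $M\le(1-1/\sqrt2)^{-1}=2+\sqrt2<4$. Combining the two bounds, $\|L^{\le 2kd}\|\ge A^{2k}/\sqrt{M}\ge A^{2k}/\sqrt{2+\sqrt2}>\tfrac12 A^{2k}$, which, undoing the rescaling, is exactly $\|L^{\le 2kd}\|\ge\tfrac12(A/B)^{2k}$.

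The step I expect to be the main obstacle is precisely the self-bounding estimate $\Ex_\QQ[g^4]\le 3^{4kd}\bigl(\Ex_\QQ[g^2]\bigr)^2$: it is what converts the given \emph{tail} bound $\QQ(|f|\ge B)\le\delta$ into a usable \emph{$L^2$} bound on $g=f^{2k}$, and it is what dictates the hypothesis $\delta\le\tfrac12 3^{-4kd}$ (the $3$ being the $(2,4)$-hypercontractivity constant and the $4kd$ twice $\deg g$). A more naive route --- estimating $\Ex_\QQ[f^{4k}]$ directly via H\"older together with crude bounds on very high moments of $f$ --- loses multiplicative factors growing with $k$ and does not appear to give an $O(1)$ bound on $M$. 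Everything else (Jensen on the $\PP$ side, the harmless factor-of-two slack, and the multilinearization and finiteness bookkeeping) is routine.
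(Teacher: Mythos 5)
Your proposal is correct and follows essentially the same route as the paper's proof in Appendix~\ref{app:thresh-poly-hard}: the test polynomial $f^{2k}$ fed into the variational characterization, Jensen on the $\PP$-side, and the Bonami lemma at degree $2kd$ to convert the $\QQ$-tail bound into the bound $\Ex_\QQ[f^{4k}] \le B^{4k}(1-3^{2kd}\sqrt{\delta})^{-1}$. The only (cosmetic) difference is that the paper packages the last step as Paley--Zygmund applied to $f^{4k}$ with threshold $\theta = B^{4k}/\Ex_\QQ[f^{4k}]$, whereas you inline the equivalent truncation-plus-Cauchy--Schwarz self-bounding argument, arriving at the same constant.
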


\noindent We defer the proof to Appendix~\ref{app:thresh-poly-hard}. To understand what the result shows, imagine, for example, that $A > B$ are both constants, and $k$ grows slowly with $n$ (e.g., $k \approx \log n$). Then, if a degree-$d$ polynomial (where $d$ may depend on $n$) can distinguish $\PPP$ from $\QQQ$ in the sense of \eqref{eq:strong-thresh} with $\delta = \frac{1}{2} \cdot 3^{-4kd}$, then $\|L_n^{\le 2kd}\| \to \infty$ as $n \to \infty$. Note though, that one weakness of this result is that we require the explicit quantitative bound $\delta \leq \frac{1}{2} \cdot 3^{-4kd}$, rather than merely $\delta = o(1)$.

The proof (see Appendix~\ref{app:thresh-poly-hard}) is a straightforward application of \emph{hypercontractivity} (see e.g., \cite{boolean-book}), a type of result stating that random variables obtained by evaluating low-degree polynomials on weakly-dependent distributions (such as i.i.d.\ ones) are well-concentrated and otherwise ``reasonable.''
We have restricted to the case where $\QQ$ is i.i.d.\ Gaussian or Rademacher because hypercontractivity results are most readily available in these cases, but we expect similar results to hold more generally.

\subsection{Algorithmic Implications of the LDLR}
\label{sec:LDLR-alg}

Having discussed the relationship between the LDLR and low-degree polynomials, we now discuss the relationship between low-degree polynomials and the power of \emph{any} computationally-bounded algorithm.

Any degree-$D$ polynomial has at most $n^D$ monomial terms and so can be evaluated in time $n^{O(D)}$ (assuming that the individual coefficients are easy to compute). However, certain degree-$D$ polynomials can of course be computed faster, e.g., if the polynomial has few nonzero monomials or has special structure allowing it to be computed via a spectral method (as in the \emph{color coding} trick \cite{color-coding} used by~\cite{HS-bayesian}).
Despite such special cases, it appears that for average-case high-dimensional hypothesis testing problems, degree-$D$ polynomials are typically as powerful as general $n^{\tilde{\Theta}(D)}$-time algorithms; this informal conjecture appears as Hypothesis~2.1.5 in \cite{sam-thesis}, building on the work of \cite{pcal,HS-bayesian,sos-hidden} (see also our previous discussion in Section~\ref{sec:ldlr-conj}). We will now explain the nuances and caveats of this conjecture, and give evidence (both formal and heuristic) in its favor.

\subsubsection{Robustness}
\label{sec:robust}

An important counterexample that we must be careful about is XOR-SAT. In the random 3-XOR-SAT problem, there are $n$ $\{\pm 1 \}$-valued variables $x_1,\ldots,x_n$ and we are given a formula consisting of $m$ random constraints of the form $x_{i_\ell} x_{j_\ell} x_{k_\ell} = b_\ell$ for $\ell \in [m]$, with $b_\ell \in \{\pm 1\}$. The goal is to determine whether there is an assignment $x \in \{\pm 1\}^n$ that satisfies all the constraints. Regardless of $m$, this problem can be solved in polynomial time using Gaussian elimination over the finite field $\mathbb{F}_2$. However, when $n \ll m \ll n^{3/2}$, the low-degree method nevertheless predicts that the problem should be computationally hard, i.e., it is hard to distinguish between a random formula (which is unsatisfiable with high probability) and a formula with a planted assignment. This pitfall is not specific to the low-degree method: sum-of-squares lower bounds, statistical query lower bounds, and the cavity method from statistical physics also incorrectly suggest the same (this is discussed in \cite{sq-parity,stat-phys-survey,sos-notes}).

The above discrepancy can be addressed (see, e.g., Lecture~3.2 of \cite{sos-notes}) by noting that Gaussian elimination is very brittle, in the sense that it no longer works to search for an assignment satisfying only a $1-\delta$ fraction of the constraints (as in this case it does not seem possible to leverage the problem's algebraic structure over $\mathbb{F}_2$). Another example of a brittle algorithm is the algorithm of \cite{reg-LLL} for linear regression, which uses Lenstra-Lenstra-Lov\'asz lattice basis reduction \cite{LLL} and only tolerates an exponentially-small level of noise. Thus, while there sometimes exist efficient algorithms that are ``high-degree'', these tend not to be robust to even a tiny amount of noise. As with SoS lower bounds, we expect that the low-degree method correctly captures the limits of \emph{robust} hypothesis testing \cite{sos-hidden} for high-dimensional problems. (Here, ``robust'' refers to the ability to handle a small amount of noise, and should not be confused with the specific notion of \emph{robust inference} \cite{sos-hidden} or with other notions of robustness that allow adversarial corruptions \cite{FK-semirandom,robust-est}.)

\subsubsection{Connection to Sum-of-Squares}
\label{sec:sos}

The sum-of-squares (SoS) hierarchy \cite{parrilo-thesis,lasserre} is a hierarchy of increasingly powerful semidefinite programming (SDP) relaxations for general polynomial optimization problems. Higher levels of the hierarchy produce larger SDPs and thus require more time to solve: level $d$ typically requires time $n^{O(d)}$. SoS lower bounds show that certain levels of the hierarchy fail to solve a given problem. As SoS seems to be at least as powerful as all known algorithms for many problems, SoS lower bounds are often thought of as the ``gold standard'' of formal evidence for computational hardness of average-case problems. For instance, if any constant level $d$ of SoS fails to solve a problem, this is strong evidence that no polynomial-time algorithm exists to solve the same problem (modulo the robustness issue discussed above). 

In order to prove SoS lower bounds, one needs to construct a valid primal certificate (also called a \emph{pseudo-expectation}) for the SoS SDP. The \emph{pseudo-calibration} approach \cite{pcal} provides a strategy for systematically constructing a pseudo-expectation; however, showing that the resulting object is valid (in particular, showing that a certain associated matrix is positive semidefinite) often requires substantial work. As a result, proving lower bounds against constant-level SoS programs is often very technically challenging (as in~\cite{pcal,sos-hidden}). We refer the reader to~\cite{sos-survey} for a survey of SoS and pseudo-calibration in the context of high-dimensional inference.

On the other hand, it was observed by the authors of \cite{pcal,sos-hidden} that the bottleneck for the success of the pseudo-calibration approach seems to typically be a simple condition, none other than the boundedness of the norm of the LDLR (see Conjecture~3.5 of \cite{sos-survey} or Section~4.3 of \cite{sam-thesis}).\footnote{More specifically, $(\|L_n^{\le D}\|^2 - 1)$ is the variance of a certain pseudo-expectation value generated by pseudo-calibration, whose actual value in a valid pseudo-expectation must be exactly 1. It appears to be impossible to ``correct'' this part of the pseudo-expectation if the variance is diverging with $n$.} Through a series of works \cite{pcal,HS-bayesian,sos-hidden,sam-thesis}, the low-degree method emerged from investigating this simpler condition in its own right. It was shown in \cite{HS-bayesian} that SoS can be used to achieve sharp computational thresholds (such as the Kesten--Stigum threshold for community detection), and that the success of the associated method also hinges on the boundedness of the norm of the LDLR. So, historically speaking, the low-degree method can be thought of as a ``lite'' version of SoS lower bounds that is believed to capture the essence of what makes SoS succeed or fail (see~\cite{HS-bayesian,sos-hidden,sos-survey,sam-thesis}).

A key advantage of the low-degree method over traditional SoS lower bounds is that it greatly simplifies the technical work required, allowing sharper results to be proved with greater ease. Moreover, the low-degree method is arguably more natural in the sense that it is not specific to any particular SDP formulation and instead seems to capture the essence of what makes problems computationally easy or hard. On the other hand, some would perhaps argue that SoS lower bounds constitute stronger evidence for hardness than low-degree lower bounds (although we do not know any average-case problems for which they give different predictions).

We refer the reader to~\cite{sam-thesis} for more on the relation between SoS and the low-degree method, including evidence for why the two methods are believed to predict the same results.

\subsubsection{Connection to Spectral Methods}
\label{sec:spectral}

For high-dimensional hypothesis testing problems, a popular class of algorithms are the \emph{spectral methods}, algorithms that build a matrix $\bM$ using the data and then threshold its largest eigenvalue. (There are also spectral methods for estimation problems, usually extracting an estimate of the signal from the leading eigenvector of $\bM$.) Often, spectral methods match the best\footnote{Here, ``best'' is in the sense of strongly distinguishing $\PP_n$ and $\QQ_n$ throughout the largest possible regime of model parameters.} performance among all known polynomial-time algorithms. Some examples include the non-backtracking and Bethe Hessian spectral methods for the stochastic block model \cite{spectral-redemption,massoulie,nb-spectrum,bethe-hessian}, the covariance thresholding method for sparse PCA \cite{DM-sparse}, and the tensor unfolding method for tensor PCA \cite{RM-tensor,HSS-tensor}. As demonstrated in \cite{HSS-tensor,sos-fast}, it is often possible to design spectral methods that achieve the same performance as SoS; in fact, some formal evidence indicates that low-degree spectral methods (where each matrix entry is a constant-degree polynomial of the data) are as powerful as any constant-degree SoS relaxation \cite{sos-hidden}\footnote{In \cite{sos-hidden}, it is shown that for a fairly general class of average-case hypothesis testing problems, if SoS succeeds in some range of parameters then there is a low-degree spectral method whose maximum \emph{positive} eigenvalue succeeds (in a somewhat weaker range of parameters). However, the resulting matrix could \emph{a priori} have an arbitrarily large (in magnitude) negative eigenvalue, which would prevent the spectral method from running in polynomial time. For this same reason, it seems difficult to establish a formal connection between SoS and the LDLR via spectral methods.}.

As a result, it is interesting to try to prove lower bounds against the class of spectral methods. Roughly speaking, the largest eigenvalue in absolute value of a polynomial-size matrix $\bM$ can be computed using $O(\log n)$ rounds of power iteration, and thus can be thought of as an $O(\log n)$-degree polynomial; more specifically, the associated polynomial is $\Tr(\bM^{2k})$ where $k \sim \log(n)$. The following result makes this precise, giving a formal connection between the low-degree method and the power of spectral methods. The proof is given in Appendix~\ref{app:spectral-hard} (and is similar to that of Theorem~\ref{thm:thresh-poly-hard}).

\begin{theorem}\label{thm:spectral-hard}
Suppose $\QQ$ draws $\bY \in \RR^N$ with entries either i.i.d.\ $\sN(0,1)$ or i.i.d.\ $\Unif(\{\pm 1\})$. Let $\PP$ be any measure on $\RR^N$ that is absolutely continuous with respect to $\QQ$. Let $\bM = \bM(\bY)$ be a real symmetric $L \times L$ matrix, each of whose entries is a polynomial in $\bY$ of degree $\le d$. Suppose
\begin{equation}\label{eq:strong-thresh-spec}
\Ex_{\bY \sim \PP}\|\bM\| \ge A \quad \text{and} \quad \QQ(\|\bM\| \ge B) \le \delta
\end{equation}
(where $\|\cdot\|$ denotes matrix operator norm) for some $A > B > 0$ and some $\delta \le \frac{1}{2} \cdot 3^{-4kd}$. Then, for any $k \in \NN$,
$$\|L^{\le 2kd}\| \ge \frac{1}{2L}\left(\frac{A}{B}\right)^{2k}.$$
\end{theorem}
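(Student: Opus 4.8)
The plan is to reduce Theorem~\ref{thm:spectral-hard} to the polynomial setting of Theorem~\ref{thm:thresh-poly-hard} by passing to the \emph{power-iteration polynomial} of $\bM$. Fix $k \in \NN$ and set $f(\bY) \colonequals \Tr\big(\bM(\bY)^{2k}\big)$. Since each entry of $\bM$ is a polynomial of degree $\le d$, each entry of $\bM^{2k}$, and hence $f$, is a polynomial in $\bY$ of degree $\le 2kd$. Writing $\mu_1,\dots,\mu_L$ for the eigenvalues of the real symmetric matrix $\bM$, we have the elementary sandwich $\|\bM\|^{2k} = \max_i |\mu_i|^{2k} \le \sum_i \mu_i^{2k} = f \le L\,\|\bM\|^{2k}$, and in particular $f \ge 0$ pointwise. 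By the (low-degree analogue of the) variational characterization in Proposition~\ref{prop:lr-optimal-l2}, for any degree-$\le 2kd$ polynomial $g$ we have $\|L^{\le 2kd}\| \ge \Ex_{\bY \sim \PP}[g(\bY)] \,/\, \|g\|_{L^2(\QQ)}$, so it suffices to show that $f$ has a large such ratio.

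For the numerator, the left half of the sandwich together with Jensen's inequality (convexity of $t \mapsto t^{2k}$ on $[0,\infty)$) gives $\Ex_{\bY \sim \PP}[f] \ge \Ex_{\bY \sim \PP}[\|\bM\|^{2k}] \ge \big(\Ex_{\bY \sim \PP}\|\bM\|\big)^{2k} \ge A^{2k}$. For the denominator, the right half of the sandwich turns the operator-norm tail bound $\QQ(\|\bM\| \ge B) \le \delta$ into the scalar tail bound $\QQ(f \ge L B^{2k}) \le \delta$. Then I would bound $\|f\|_{L^2(\QQ)}^2 = \Ex_{\bY \sim \QQ}[f^2]$ exactly as in the proof of Theorem~\ref{thm:thresh-poly-hard}: split the expectation over $\{f < LB^{2k}\}$ and $\{f \ge LB^{2k}\}$; the first contributes at most $L^2 B^{4k}$, and on the second Cauchy--Schwarz gives at most $\big(\Ex_\QQ[f^4]\big)^{1/2}\delta^{1/2}$. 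Applying hypercontractivity to the degree-$\le 2kd$ polynomial $f$ bounds $\Ex_\QQ[f^4] \le 3^{4kd}\big(\Ex_\QQ[f^2]\big)^2$ (for $\QQ$ i.i.d.\ Gaussian this is Gaussian hypercontractivity; for $\QQ$ i.i.d.\ Rademacher one first reduces the entries of $\bM^{2k}$ modulo $x_i^2 = 1$ to get a multilinear representative of degree $\le 2kd$ and then applies the Bonami--Beckner inequality). Combining, $v \colonequals \Ex_\QQ[f^2]$ obeys the self-bounding inequality $v \le L^2 B^{4k} + 3^{2kd}\sqrt{\delta}\,v$, which under the hypothesis $\delta \le \tfrac12 3^{-4kd}$ forces $v = O(L^2 B^{4k})$, i.e.\ $\|f\|_{L^2(\QQ)} = O(L B^{2k})$. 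Dividing yields $\|L^{\le 2kd}\| \ge \Ex_\PP[f]/\|f\|_{L^2(\QQ)} = \Omega\big(\tfrac{1}{L}(A/B)^{2k}\big)$, and keeping track of constants exactly as in Theorem~\ref{thm:thresh-poly-hard} upgrades this to the stated $\tfrac{1}{2L}(A/B)^{2k}$.

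The only genuinely new ingredient beyond Theorem~\ref{thm:thresh-poly-hard} is the observation that $\Tr(\bM^{2k})$ is simultaneously (i) a degree-$\le 2kd$ polynomial and (ii) pinched between $\|\bM\|^{2k}$ and $L\|\bM\|^{2k}$, so that it inherits both a large mean under $\PP$ and a good upper tail under $\QQ$; this is the precise sense in which ``power iteration is a low-degree polynomial,'' and it is the source of the single factor of $L^{-1}$ in the conclusion. I expect the technical care to go into (a) the Rademacher multilinearization step, which is what lets hypercontractivity be applied with the degree bound $2kd$ rather than a larger one, and (b) verifying that the self-bounding inequality for $\Ex_\QQ[f^2]$ genuinely closes under the stated $\delta$, which is where the factor $3^{-4kd}$ (from the fourth-moment hypercontractivity constant $3^{4kd}$) and the final constant $\tfrac12$ enter. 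Note that, unlike in Theorem~\ref{thm:thresh-poly-hard}, no additional ``boosting'' power of the polynomial is needed here: $\Tr(\bM^{2k})$ is constructed to have the target degree $2kd$ from the outset, so the role of $k$ in this argument lives entirely inside the matrix power.
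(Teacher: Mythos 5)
Your proposal is correct and follows essentially the same route as the paper's proof: both take $f(\bY)=\Tr(\bM^{2k})$, lower-bound $\EE_\PP[f]$ by $A^{2k}$ via Jensen, convert the operator-norm tail bound into a tail bound on $f$ using $\lambda_{\max}^{2k}\le f\le L\lambda_{\max}^{2k}$, and control $\EE_\QQ[f^2]$ via degree-$2kd$ hypercontractivity before invoking the variational characterization of $\|L^{\le 2kd}\|$. The only cosmetic difference is that the paper packages the second-versus-fourth-moment step as the Paley--Zygmund inequality while you carry out the indicator splitting and Cauchy--Schwarz by hand; the two yield the identical self-bounding inequality $\EE_\QQ[f^2]\le L^2B^{4k}+3^{2kd}\sqrt{\delta}\,\EE_\QQ[f^2]$ and hence the same constant.
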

\noindent For example, suppose we are interested in polynomial-time spectral methods, in which case we should consider $L = \mathrm{poly}(n)$ and $d = O(1)$. If there exists a spectral method with these parameters that distinguishes $\PPP$ from $\QQQ$ in the sense of \eqref{eq:strong-thresh-spec} for some constants $A > B$, and with $\delta \to 0$ faster than any inverse polynomial (in $n$), then there exists a choice of $k = O(\log n)$ such that $\|L^{\le O(\log n)}\| = \omega(1)$. And, by contrapositive, if we could show that $\|L^{\le D}\| = O(1)$ for some $D = \omega(\log n)$, that would imply that there is no spectral method with the above properties. This justifies the choice of logarithmic degree in Conjecture~\ref{conj:low-deg-informal}. Similarly to Theorem~\ref{thm:thresh-poly-hard}, one weakness of Theorem~\ref{thm:spectral-hard} is that we can only rule out spectral methods whose failure probability is smaller than any inverse polynomial, instead of merely $o(1)$.

\begin{remark}
Above, we have argued that polynomial-time spectral methods correspond to polynomials of degree roughly $\log(n)$. What if we are instead interested in subexponential runtime $\exp(n^\delta)$ for some constant $\delta \in (0,1)$? One class of spectral method computable with this runtime is that where the dimension is $L \approx \exp(n^\delta)$ and the degree of each entry is $d \approx n^\delta$ (such spectral methods often arise based on SoS \cite{strongly-refuting,BGG,BGL}). To rule out such a spectral method using Theorem~\ref{thm:spectral-hard}, we would need to take $k \approx \log(L) \approx n^\delta$ and would need to show $\|L^{\le D}\| = O(1)$ for $D \approx n^{2\delta}$. However, Conjecture~\ref{conj:low-deg-informal} postulates that time-$\exp(n^\delta)$ algorithms should instead correspond to degree-$n^\delta$ polynomials, and this correspondence indeed appears to be the correct one based on the examples of tensor PCA (see Section~\ref{sec:spiked-tensor}) and sparse PCA (see \cite{subexp-sparse}).

Although this seems at first to be a substantial discrepancy, there is evidence that there are actually spectral methods of dimension $L \approx \exp(n^\delta)$ and \emph{constant} degree $d = O(1)$ that achieve optimal performance among $\exp(n^\delta)$-time algorithms. Such a spectral method corresponds to a degree-$n^\delta$ polynomial, as expected. These types of spectral methods have been shown to exist for tensor PCA \cite{kikuchi}.
\end{remark}

\subsubsection{Formal Conjecture}
\label{sec:discuss-conj}

We next discuss the precise conjecture that Hopkins \cite{sam-thesis} offers on the algorithmic implications of the low-degree method. Informally, the conjecture is that for ``sufficiently nice'' $\PPP$ and $\QQQ$, if $\|L_n^{\le D}\| = O(1)$ for some $D \ge (\log n)^{1+\varepsilon}$, then there is no polynomial-time algorithm that strongly distinguishes $\PPP$ and $\QQQ$. We will not state the full conjecture here (see Conjecture~2.2.4 in \cite{sam-thesis}) but we will briefly discuss some of the details that we have not mentioned yet.

Let us first comment on the meaning of ``sufficiently nice'' distributions. Roughly speaking, this means that: 
\begin{enumerate}
    \item $\QQ_n$ is a product distribution,
    \item $\PP_n$ is sufficiently symmetric with respect to permutations of its coordinates, and
    \item $\PP_n$ is then perturbed by a small amount of additional noise.
\end{enumerate}
Conditions (1) and (2) or minor variants thereof are fairly standard in high-dimensional inference problems. The reason for including (3) is to rule out non-robust algorithms such as Gaussian elimination (see Section~\ref{sec:robust}).

One difference between the conjecture of \cite{sam-thesis} and the conjecture discussed in these notes is that \cite{sam-thesis} considers the notion of \emph{coordinate degree} rather than \emph{polynomial degree}. A polynomial has coordinate degree $\le D$ if no monomial involves more than $D$ variables; however, each individual variable can appear with arbitrarily-high degree in a monomial.\footnote{Indeed, coordinate degree need not be phrased in terms of polynomials, and one may equivalently consider the linear subspace of $L^2(\QQ_n)$ of functions that is spanned by functions of at most $D$ variables at a time.} In \cite{sam-thesis}, the low-degree likelihood ratio is defined as the projection of $L_n$ onto the space of polynomials of coordinate degree~$\le D$. The reason for this is to capture, e.g., algorithms that preprocess the data by applying a complicated high-degree function entrywise. However, we are not aware of any natural problem in which it is important to work with coordinate degree instead of polynomial degree. While working with coordinate degree gives lower bounds that are formally stronger, we work with polynomial degree throughout these notes because it simplifies many of the computations.

\subsubsection{Empirical Evidence and Refined Conjecture}

Perhaps the strongest form of evidence that we have in favor of the low-degree method is simply that it has been carried out on many high-dimensional inference problems and seems to always give the correct predictions, coinciding with widely-believed conjectures. These problems include planted clique \cite{sam-thesis} (implicit in \cite{pcal}), community detection in the stochastic block model \cite{HS-bayesian,sam-thesis}, the spiked tensor model \cite{sos-hidden,sam-thesis}, the spiked Wishart model \cite{BKW-sk}, and sparse PCA \cite{subexp-sparse}. In these notes we have also carried out low-degree calculations for the spiked Wigner model and spiked tensor model (see Section~\ref{sec:examples}). Some of the early results \cite{HS-bayesian,sos-hidden} showed only $\|L_n^{\le D}\| = n^{o(1)}$ as evidence for hardness, which was later improved to $O(1)$ \cite{sam-thesis}. Some of the above results \cite{sos-hidden,sam-thesis} use coordinate degree instead of degree (as we discussed in Section~\ref{sec:discuss-conj}). Throughout the above examples, the low-degree method has proven to be versatile in that it can predict both sharp threshold behavior as well as precise smooth tradeoffs between subexponential runtime and statistical power (as illustrated in the two parts of Section~\ref{sec:examples}).

As discussed earlier, there are various reasons to believe that if $\|L_n^{\le D}\| = O(1)$ for some $D = \omega(\log n)$ then there is no polynomial-time distinguishing algorithm; for instance, this allows us to rule out a general class of spectral methods (see Theorem~\ref{thm:spectral-hard}). However, we have observed that in numerous examples, the LDLR actually has the following more precise behavior that does not involve the extra factor of $\log(n)$.
\begin{conjecture}[Informal]
Let $\PPP$ and $\QQQ$ be ``sufficiently nice.'' If there exists a polynomial-time algorithm to strongly distinguish $\PPP$ and $\QQQ$ then $\|L_n^{\le D}\| = \omega(1)$ for any $D = \omega(1)$.
\end{conjecture}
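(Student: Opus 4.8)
The plan is to argue the contrapositive: assuming $\|L_n^{\le D}\| = O(1)$ for some $D = D(n) = \omega(1)$, one wants to rule out any polynomial-time algorithm that strongly distinguishes $\PPP$ and $\QQQ$. Since the statement is a \emph{conjecture} rather than a theorem, what follows is a strategy together with the obstruction that keeps it from being a proof. The two moving parts are (a) passing from a hypothetical efficient distinguisher to a \emph{low-degree polynomial} distinguisher, and (b) feeding that polynomial into the hypercontractivity bound of Theorem~\ref{thm:thresh-poly-hard}.

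For step (a), one would take a hypothetical time-$n^{O(1)}$ algorithm $f_n$ strongly distinguishing $\PPP$ from $\QQQ$ and simulate it by a polynomial $p_n$ of degree $d = d(n)$ that may be taken to grow arbitrarily slowly with $n$ (in particular $d = \omega(1)$), still satisfying a bound of the shape \eqref{eq:strong-thresh}, i.e.\ $\EE_{\PP_n}[p_n] \ge A$ and $\QQ_n(|p_n| \ge B) \le \delta$ for constants $A > B$. This is exactly the content of the (conjectural) Hypothesis~2.1.5 of \cite{sam-thesis}, and it is precisely where ``sufficiently nice'' must be invoked: the product structure and permutation symmetry of conditions (1)--(2) of Section~\ref{sec:discuss-conj}, and especially the small-noise perturbation of condition (3), whose role is to exclude brittle algebraic algorithms such as Gaussian elimination for XOR-SAT (Section~\ref{sec:robust}) that genuinely cannot be captured by low-degree polynomials. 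One should also impose a homogeneity hypothesis ruling out the diluted-measure counterexample following Question~\ref{q:thresh-poly-yes}, and check that all of these conditions survive the perturbation of (3).

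For step (b), one invokes Theorem~\ref{thm:thresh-poly-hard}: choose $k = k(n) = \omega(1)$ slowly enough that $(A/B)^{2k} \to \infty$ while $2kd \le D$; this yields $\|L_n^{\le 2kd}\| \ge \frac{1}{2}(A/B)^{2k} = \omega(1)$, contradicting $\|L_n^{\le D}\| = O(1)$. Because $d$ and $k$ can be taken to grow arbitrarily slowly, the contradiction is reached for every $D = \omega(1)$, which is the sharpening over the $(\log n)^{1+\varepsilon}$ threshold of Conjecture~\ref{conj:low-deg-informal}. (An entirely analogous argument through Theorem~\ref{thm:spectral-hard} handles the subclass of spectral methods.)

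The main obstacle is step (a), and it is not a technicality but the crux: there is currently no proof that polynomial-time computation on product measures is captured by low-degree polynomials, so the argument above is conditional on a strong form of the low-degree conjecture itself. A secondary, more concrete obstacle is quantitative: Theorem~\ref{thm:thresh-poly-hard} requires the explicit bound $\delta \le \frac{1}{2}\cdot 3^{-4kd}$, i.e.\ a super-polynomially small failure probability, whereas ``strongly distinguishes'' only provides $\delta = o(1)$; closing this gap would need either an amplification/boosting of the test (delicate, since the two hypotheses cannot be freely resampled) or a version of Theorem~\ref{thm:thresh-poly-hard} tolerating $\delta = o(1)$. Absent a resolution of the first obstacle, the honest status of the statement is that of a refined conjecture, supported by the case-by-case verifications of Section~\ref{sec:examples} (and of the further examples cited there) and by the partial converses in Theorems~\ref{thm:thresh-poly-hard} and~\ref{thm:spectral-hard}.
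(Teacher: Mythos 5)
Your proposal correctly recognizes that this statement is a conjecture with no proof in the paper, and your assessment mirrors the paper's own discussion: the only formal evidence available is the hypercontractivity-based partial converses (Theorems~\ref{thm:thresh-poly-hard} and~\ref{thm:spectral-hard}), the irreducible gap is the unproven passage from polynomial-time algorithms to low-degree polynomials, and the quantitative requirement $\delta \le \frac{1}{2}\cdot 3^{-4kd}$ is exactly the weakness the paper itself flags. Your identification of the role of ``sufficiently nice'' (robustness against Gaussian-elimination-type algorithms, homogeneity against diluted measures) matches the caveats in Sections~\ref{sec:robust} and~\ref{sec:LDLR-poly}, so this is essentially the same position the paper takes.
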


\noindent In other words, if $\|L_n^{\le D}\| = O(1)$ for some $D = \omega(1)$, this already constitutes evidence that there is no polynomial-time algorithm.
The above seems to be a cleaner version of the main low-degree conjecture that remains correct for many problems of practical interest.

\subsubsection{Extensions}
\label{sec:extensions}

While we have focused on the setting of hypothesis testing throughout these notes, we remark that low-degree arguments have also shed light on other types of problems such as \emph{estimation} (or \emph{recovery}) and \emph{certification}.

First, as we have mentioned before, non-trivial estimation\footnote{Non-trivial estimation of a signal $\bx \in \RR^n$ means having an estimator $\hat{\bm x}$ achieving $|\langle \hat \bx, \bx \rangle|/(\|\hat \bx\| \cdot \|\bx\|) \ge \varepsilon$ with high probability, for some constant $\varepsilon > 0$.} typically seems to be precisely as hard as strong distinguishing (see Definition~\ref{def:stat-ind}), in the sense that the two problems share the same $\lambda_{\mathsf{stat}}$ and $\lambda_{\mathsf{comp}}$. For example, the statistical thresholds for testing and recovery are known to coincide for problems such as the two-groups stochastic block model \cite{MNS-rec,massoulie,MNS-proof} and the spiked Wigner matrix model (for a large class of spike priors) \cite{finite-size,detection-wig}. Also, for any additive Gaussian noise model, any lower bound against hypothesis testing using the second moment method (Lemma~\ref{lem:second-moment-contiguity}) or a conditional second moment method also implies a lower bound against recovery \cite{BMVVX-pca}. More broadly, we have discussed (see Section~\ref{sec:spectral}) how suitable spectral methods typically give optimal algorithms for high-dimensional problems; such methods typically succeed at testing and recovery in the same regime of parameters, because whenever the leading eigenvalue undergoes a phase transition, the leading eigenvector will usually do so as well (see Theorem~\ref{thm:wig-bbp} for a simple example). Thus, low-degree evidence that hypothesis testing is hard also constitutes evidence that non-trivial recovery is hard, at least heuristically. Note, however, that there is no formal connection (in either direction) between testing and recovery (see \cite{BMVVX-pca}), and there are some situations in which the testing and recovery thresholds differ (e.g., \cite{planting-trees}).

In a different approach, Hopkins and Steurer \cite{HS-bayesian} use a low-degree argument to study the recovery problem more directly. In the setting of community detection in the stochastic block model, they examine whether there is a low-degree polynomial that can non-trivially estimate whether two given network nodes are in the same community. They show that such a polynomial exists only when the parameters of the model lie above the problem's widely-conjectured computational threshold, the Kesten--Stigum threshold. This constitutes direct low-degree evidence that recovery is computationally hard below the Kesten--Stigum threshold.

A related (and more refined) question is that of determining the optimal estimation error (i.e., the best possible correlation between the estimator and the truth) for any given signal-to-noise parameter $\lambda$. Methods such as \emph{approximate message passing} can often answer this question very precisely, both statistically and computationally (see, e.g., \cite{AMP,LKZ-mmse,DAM,BDMKLZ-spiked}, or \cite{stat-phys-survey,BPW-phys-notes,leo-survey} for a survey). One interesting question is whether one can recover these results using a variant of the low-degree method.

Another type of statistical task is \emph{certification}. Suppose that $\bY \sim \QQ_n$ has some property $\sP$ with high probability. We say an algorithm \emph{certifies} the property $\sP$ if (i) the algorithm outputs ``yes'' with high probability on $\bY \sim \QQ_n$, and (ii) if $\bY$ does not have property $\sP$ then the algorithm \emph{always} outputs ``no.'' In other words, when the algorithm outputs ``yes'' (which is usually the case), this constitutes a proof that $\bY$ indeed has property $\sP$. Convex relaxations (including SoS) are a common technique for certification. In \cite{BKW-sk}, the low-degree method is used to argue that certification is computationally hard for certain structured PCA problems. The idea is to construct a \emph{quiet planting} $\PP_n$, which is a distribution for which (i) $\bY \sim \PP_n$ never has property $\sP$, and (ii) the low-degree method indicates that it is computationally hard to strongly distinguish $\PP_n$ and $\QQ_n$. In other words, this gives a reduction from a hypothesis testing problem to a certification problem, since any certification algorithm can be used to distinguish $\PP_n$ and $\QQ_n$. (Another example of this type of reduction, albeit relying on a different heuristic for computational hardness, is \cite{hard-rip}.)

\section*{Acknowledgments}
\addcontentsline{toc}{section}{Acknowledgments}
We thank the participants of a working group on the subject of these notes, organized by the authors at the Courant Institute of Mathematical Sciences during the spring of 2019.
We also thank Samuel B.\ Hopkins, Philippe Rigollet, and David Steurer for helpful discussions.

\addcontentsline{toc}{section}{References}
\bibliographystyle{alpha}
\bibliography{main}

\appendix

\section{Omitted Proofs}

\subsection{Neyman-Pearson Lemma}
\label{app:neyman-pearson}

We include here, for completeness, a proof of the classical Neyman--Pearson lemma \cite{N-P}.

\begin{proof}[Proof of Lemma~\ref{lem:neyman-pearson}]
    Note first that a test $f$ is completely determined by its \emph{rejection region}, $R_f = \{\bY: f(\bY) = \PP\}$.
    We may rewrite the power of $f$ as
    \begin{equation*}
        1 - \beta(f) = \PP[f(\bY) = \PP] = \int_{R_f}d\PP(\bY) = \int_{R_f}L(\bY)d\QQ(\bY).
    \end{equation*}
    On the other hand, our assumption on $\alpha(f)$ is equivalent to
    \begin{equation*}
        \QQ[R_f] \leq \QQ[L(\bY) > \eta].
    \end{equation*}
    Thus, we are interested in solving the optimization
    \begin{equation*}
        \begin{array}{ll}
        \text{maximize} & \int_{R_f}L(\bY)d\QQ(\bY) \\
        \text{subject to} & R_f \in \sF, \\ & \QQ[R_f] \leq \QQ[L(\bY) > \eta].
        \end{array}
    \end{equation*}
    
    \noindent From this form it is intuitive how to proceed: let us write $R_\star \colonequals \{\bY: L(\bY) > \eta\} = R_{L_\eta}$, then the difference of powers is
    \begin{align*}
        (1 - \beta(L_\eta)) - (1 - \beta(f))
        &= \int_{R_\star}L(\bY)d\QQ(\bY) - \int_{R_f}L(\bY)d\QQ(\bY) \nonumber \\
        &= \int_{R_\star \setminus R_f}L(\bY)d\QQ(\bY) - \int_{R_f \setminus R_\star}L(\bY)d\QQ(\bY) \nonumber \\
        &\geq \eta\left(\QQ[R_\star \setminus R_f] - \QQ[R_f \setminus R_\star]\right) \nonumber \\
        &= \eta\left(\QQ[R_\star] - \QQ[R_f]\right) \nonumber \\
        &\geq 0,
    \end{align*}
    completing the proof.
\end{proof}

\subsection{Equivalence of Symmetric and Asymmetric Noise Models}
\label{app:symm}

For technical convenience, in the main text we worked with an asymmetric version of the spiked Wigner model (see Section~\ref{sec:spiked-matrix}), $\bY = \lambda \bx \bx^\top + \bZ$ where $\bZ$ has i.i.d.\ $\sN(0,1)$ entries. A more standard model is to instead observe $\widetilde\bY = \frac{1}{2}(\bY + \bY^\top) = \lambda \bx \bx^\top + \bW$, where $\bW$ is symmetric with $\sN(0,1)$ diagonal entries and $\sN(0,1/2)$ off-diagonal entries, all independent. These two models are equivalent, in the sense that if we are given a sample from one then we can produce a sample from the other. Clearly, if we are given $\bY$, we can symmetrize it to form $\widetilde\bY$. Conversely, if we are given $\widetilde\bY$, we can draw an independent matrix $\bG$ with i.i.d.\ $\sN(0,1)$ entries, and compute $\widetilde\bY + \frac{1}{2}(\bG - \bG^\top)$; one can check that the resulting matrix has the same distribution as $\bY$ (we are adding back the ``skew-symmetric part'' that is present in $\bY$ but not $\widetilde\bY$).

In the spiked tensor model (see Section~\ref{sec:spiked-tensor}), our asymmetric noise model is similarly equivalent to the standard symmetric model defined in \cite{RM-tensor} (in which the noise tensor $\bZ$ is averaged over all permutations of indices). Since we can treat each entry of the symmetric tensor separately, it is sufficient to show the following one-dimensional fact: for unknown $x \in \RR$, $k$ samples of the form $y_i = x + \sN(0,1)$ are equivalent to one sample of the form $\tilde y = x + \sN(0,1/k)$. Given $\{y_i\}$, we can sample $\tilde y$ by averaging: $\frac{1}{k}\sum_{i=1}^k y_i$. For the converse, fix unit vectors $\ba_1,\ldots,\ba_k$ at the corners of a simplex in $\RR^{k-1}$; these satisfy $\langle \ba_i,\ba_j \rangle = -\frac{1}{k-1}$ for all $i \ne j$. Given $\tilde y$, draw $\bu \sim \sN(0,{\bm I}_{k-1})$ and let $y_i = \tilde y + \sqrt{1-1/k} \,\langle \ba_i,\bu \rangle$; one can check that these have the correct distribution.

\subsection{Low-Degree Analysis of Spiked Wigner Above the PCA Threshold}
\label{app:wig-above}

\begin{proof}[Proof of Theorem~\ref{thm:wig-above}]
We follow the proof of Theorem~\ref{thm:tensor-lowdeg}(ii) in Section~\ref{sec:tensor-lower}. For any choice of $d \le D$, using the standard bound $\binom{2d}{d} \ge 4^d/(2\sqrt{d})$,
\begin{align*}
\|L_n^{\le D}\|^2 &\ge \frac{\lambda^{2d}}{d!} \Ex_{\bx^1,\bx^2}[\langle \bx^1,\bx^2 \rangle^{2d}] \\
&\ge \frac{\lambda^{2d}}{d!} \binom{n}{d} \frac{(2d)!}{2^{d}} \tag{using the moment bound~\eqref{eq:mom-bound} from Section~\ref{sec:tensor-lower}}\\
&= \frac{\lambda^{2d}}{d!} \frac{n!}{d!(n-d)!} \frac{(2d)!}{2^{d}} \\
&= \lambda^{2d} \binom{2d}{d} \frac{n!}{(n-d)! 2^d} \\
&\ge \lambda^{2d} \frac{4^d}{2\sqrt{d}} \frac{(n-d)^d}{2^d} \\
&= \frac{1}{2\sqrt{d}} \left(2\lambda^2 (n-d)\right)^d \\
&= \frac{1}{2\sqrt{d}} \left(\hat\lambda^2 \left(1 - \frac{d}{n}\right)\right)^d.
\end{align*}
Since $\hat\lambda > 1$, this diverges as $n \to \infty$ provided we choose $d \le D$ with $\omega(1) \leq d \leq o(n)$.
\end{proof}

\section{Hermite Polynomials}
\label{app:hermite}

Here we give definitions and basic facts regarding the Hermite polynomials (see, e.g, \cite{Szego-OP} for further details), which are orthogonal polynomials with respect to the standard Gaussian measure.

\begin{definition}
    \label{def:hermite}
    The \emph{univariate Hermite polynomials} are the sequence of polynomials $h_k(x) \in \RR[x]$ for $k \geq 0$ defined by the recursion
    \begin{align*}
        h_0(x) &= 1, \\
        h_{k + 1}(x) &= xh_k(x) - h_k^\prime(x).
    \end{align*}
    The \emph{normalized univariate Hermite polynomials} are $\what{h}_k(x) = h_k(x) / \sqrt{k!}$.
\end{definition}

\noindent
The following is the key property of the Hermite polynomials, which allows functions in $L^2(\sN(0, 1))$ to be expanded in terms of them.

\begin{proposition}
    The normalized univariate Hermite polynomials form a complete orthonormal system of polynomials for $L^2(\sN(0, 1))$.
\end{proposition}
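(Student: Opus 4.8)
The plan is to establish the two claims — orthonormality and completeness — separately, the second being the substantive one. Throughout, write $\varphi(x) = (2\pi)^{-1/2} e^{-x^2/2}$ for the standard Gaussian density and $\langle f, g\rangle = \Ex_{y \sim \sN(0,1)}[f(y) g(y)]$ for the $L^2(\sN(0,1))$ inner product.

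For orthonormality, I would first extract the \emph{Rodrigues formula} from the recursion in Definition~\ref{def:hermite}: since $\varphi'(x) = -x\varphi(x)$, we have $h_{k+1}(x)\varphi(x) = x h_k(x)\varphi(x) - h_k'(x)\varphi(x) = -\big(h_k(x)\varphi(x)\big)'$, so by induction $h_k(x)\varphi(x) = (-1)^k \varphi^{(k)}(x)$. The recursion also gives, by induction, that $h_k$ is monic of degree $k$. Then for $j \le k$, integrating by parts $k$ times — the boundary terms vanish because every $\varphi^{(m)}$ is a polynomial times $\varphi$ and hence decays at $\pm\infty$ — yields
\[ \langle h_j, h_k \rangle = (-1)^k \int h_j(x)\,\varphi^{(k)}(x)\,dx = \int h_j^{(k)}(x)\,\varphi(x)\,dx, \]
which is $0$ when $j < k$ (as $\deg h_j < k$) and equals $k!$ when $j = k$ (as $h_k$ is monic of degree $k$, so $h_k^{(k)} = k!$). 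Hence $\langle \what{h}_j, \what{h}_k\rangle = \delta_{jk}$.

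For completeness it suffices to show that the only $f \in L^2(\sN(0,1))$ with $\langle f, h_k\rangle = 0$ for all $k \ge 0$ is $f = 0$ almost everywhere; since $\{h_0, \ldots, h_k\}$ spans the polynomials of degree $\le k$, this hypothesis is equivalent to $\Ex_{y \sim \sN(0,1)}[f(y)\, y^k] = 0$ for every $k$. The idea is to package all these moment conditions into a single entire function. By Cauchy--Schwarz, $\int |f(x)|\, e^{t|x|}\,\varphi(x)\,dx \le \|f\|\big(\int e^{2t|x|}\varphi(x)\,dx\big)^{1/2} < \infty$ for every $t \ge 0$, so the finite signed measure $f(x)\varphi(x)\,dx$ has all exponential moments. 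Consequently $F(z) \colonequals \int f(x)\, e^{zx}\,\varphi(x)\,dx$ is well-defined for every complex $z$, and — dominating the integrand for $z$ in a bounded region and applying Morera's theorem with Fubini, or simply differentiating under the integral — $F$ is entire. But $F^{(k)}(0) = \Ex_{y \sim \sN(0,1)}[f(y)\, y^k] = 0$ for every $k$, so all Taylor coefficients of $F$ at $0$ vanish and thus $F \equiv 0$. In particular $\int f(x)\, e^{itx}\,\varphi(x)\,dx = 0$ for all real $t$, i.e., the Fourier transform of the finite signed measure $f\varphi\,dx$ is identically zero; by uniqueness of Fourier transforms this forces $f(x)\varphi(x) = 0$ a.e., and since $\varphi > 0$ everywhere we conclude $f = 0$ a.e.

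The main obstacle is the completeness half, and within it the single genuinely analytic step: justifying that $F$ extends to an entire function (equivalently, that the moment generating function of $f\varphi$ is holomorphic everywhere). This is exactly where the very light tails of the Gaussian — all exponential moments finite — are used, and for heavier-tailed weights such a density statement can fail. The remaining ingredients (the Rodrigues formula, the integration-by-parts evaluation of the norms, and uniqueness of the Fourier transform) are routine and could instead be cited from a standard reference such as \cite{Szego-OP}.
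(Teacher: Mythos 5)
Your proof is correct. Note that the paper does not actually prove this proposition at all --- it is stated in Appendix~\ref{app:hermite} as a standard fact with a pointer to \cite{Szego-OP} --- so there is no in-paper argument to compare against; what you have written is the classical proof, and it is complete. A few remarks on how it sits relative to the paper's toolkit. Your orthonormality argument via the Rodrigues formula $h_k\varphi = (-1)^k\varphi^{(k)}$ is essentially a repackaging of the paper's Gaussian integration-by-parts identity (Proposition~\ref{prop:gaussian-ibp}): applying that identity with $f = h_j$ gives $\Ex_{y\sim\sN(0,1)}[h_k(y)h_j(y)] = \Ex[h_j^{(k)}(y)]$ directly, which is $0$ for $j<k$ and $k!$ for $j=k$ once one knows $h_k$ is monic of degree $k$ (which, as you note, follows by induction from the recursion). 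So that half could have been obtained by citing the paper's own lemma rather than rederiving it. The completeness half is the genuinely nontrivial part, and you have correctly isolated the one analytic step that makes it work: the finite signed measure $f\varphi\,dx$ has all exponential moments, so its two-sided Laplace transform $F(z)$ extends to an entire function, vanishing of all derivatives at $0$ forces $F\equiv 0$, and restricting to the imaginary axis plus Fourier uniqueness kills $f$. Your closing caveat is also on point: for weights with heavier tails the moment problem can be indeterminate (the lognormal density being the classic example) and the orthogonal polynomials need not be dense, so the Gaussian tail decay is doing real work here and is not a dispensable convenience.
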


The following are the multivariate generalizations of the above definition that we used throughout the main text.

\begin{definition}
    The \emph{$N$-variate Hermite polynomials} are the polynomials $H_{\bm\alpha}(\bX) \colonequals \prod_{i = 1}^N h_{\alpha_i}(X_i)$ for $\bm\alpha \in \NN^N$.
    The \emph{normalized $N$-variate Hermite polynomials in $N$ variables} are the polynomials $\what{H}_{\bm\alpha}(\bX) \colonequals \prod_{i = 1}^N \what{h}_{\alpha_i}(X_i) = (\prod_{i = 1}^N \alpha_i!)^{-1/2} \prod_{i = 1}^N h_{\alpha_i}(X_i)$ for $\bm\alpha \in \NN^N$.
\end{definition}

\noindent
Again, the following is the key property justifying expansions in terms of these polynomials.

\begin{proposition}
    The normalized $N$-variate Hermite polynomials form a complete orthonormal system of (multivariate) polynomials for $L^2(\sN(\bm 0, \bm I_N))$.
\end{proposition}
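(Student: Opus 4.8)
The plan is to reduce the multivariate claim to the univariate proposition stated just above, exploiting the fact that $\sN(\bm 0, \bm I_N)$ is a product measure.

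For \emph{orthonormality}, I would simply apply Fubini's theorem: for $\bm\alpha,\bm\beta \in \NN^N$,
\[
\Ex_{\bY \sim \sN(\bm 0, \bm I_N)}\big[\what{H}_{\bm\alpha}(\bY)\what{H}_{\bm\beta}(\bY)\big] = \prod_{i=1}^N \Ex_{y \sim \sN(0,1)}\big[\what{h}_{\alpha_i}(y)\what{h}_{\beta_i}(y)\big] = \prod_{i=1}^N \delta_{\alpha_i\beta_i} = \delta_{\bm\alpha\bm\beta},
\]
using the univariate orthonormality relations~\eqref{eq:hermite-orth-1d}; in particular each $\what{H}_{\bm\alpha}$ lies in $L^2(\sN(\bm 0, \bm I_N))$ and the family is orthonormal.

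For \emph{completeness}, the cleanest route is to invoke the Hilbert-space identification $L^2(\sN(\bm 0, \bm I_N)) \cong L^2(\sN(0,1))^{\otimes N}$ (a consequence of Fubini together with density of simple tensors), under which $\what{H}_{\bm\alpha}$ corresponds to $\what{h}_{\alpha_1} \otimes \cdots \otimes \what{h}_{\alpha_N}$; since a tensor product of complete orthonormal systems is a complete orthonormal system of the tensor-product space, completeness is then immediate from the univariate proposition. A more hands-on alternative, avoiding the tensor-product formalism: because $h_k(x) = x^k + (\text{lower-order terms})$, the family $\{H_{\bm\alpha}\}_{\bm\alpha \in \NN^N}$ spans exactly $\RR[x_1,\dots,x_N]$ (the change of basis to monomials being unitriangular), so it suffices to show polynomials are dense in $L^2(\sN(\bm 0, \bm I_N))$. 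For that I would take $g \in L^2(\sN(\bm 0, \bm I_N))$ orthogonal to every polynomial, expand $\exp(\la \bm t, \bY\ra)$ in its power series to conclude $\Ex[g(\bY)\exp(\la \bm t, \bY \ra)] \equiv 0$ for all $\bm t \in \RR^N$ (the interchange of sum and expectation being licensed by dominated convergence, since $\Ex[|g(\bY)|\exp(c\|\bY\|)] < \infty$ for every $c > 0$ by Cauchy--Schwarz and the Gaussian moment generating function), and then invoke injectivity of this transform to deduce $g = 0$ almost everywhere.

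The orthonormality step is routine bookkeeping with the product structure; the only genuine content is the completeness step, i.e.\ the density of polynomials in $L^2$ of a Gaussian. This is classical and ultimately rests on the rapid decay of Gaussian tails (equivalently, that the Gaussian is determined by its moments). I expect the main obstacle to be purely one of rigor rather than ideas: justifying the interchange of summation and integration in the transform argument, or — in the tensor-product approach — the identification of $L^2$ of a product measure with the Hilbert-space tensor product and the fact that tensoring orthonormal bases yields an orthonormal basis.
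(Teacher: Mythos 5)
Your proof is correct. Note that the paper itself does not prove this proposition at all: it is listed among the ``basic facts'' in Appendix~B and delegated to the reference \cite{Szego-OP}, so there is no in-paper argument to compare against. Your two steps are exactly the standard ones. The orthonormality computation via Fubini and the univariate relations \eqref{eq:hermite-orth-1d} is routine and complete as written. For completeness, both of your routes are sound: the identification $L^2(\sN(\bm 0, \bm I_N)) \cong L^2(\sN(0,1))^{\otimes N}$ together with the fact that tensoring orthonormal bases yields an orthonormal basis is standard Hilbert-space theory, and your hands-on alternative correctly isolates the one genuinely nontrivial point -- density of polynomials in $L^2$ of a Gaussian, which can fail for general measures with all moments finite (e.g., the lognormal), and which you justify by the correct mechanism: for $g$ orthogonal to all polynomials, the dominated-convergence interchange gives $\Ex[g(\bY)\exp(\la \bm t, \bY\ra)] = 0$ for all $\bm t$, and the vanishing of this everywhere-finite Laplace transform of the finite signed measure $g\,d\gamma$ forces $g = 0$. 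The unitriangularity observation reducing the Hermite span to the polynomial span is also correct. No gaps.
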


For the sake of completeness, we also provide proofs below of the three identities concerning univariate Hermite polynomials that we used in Section~\ref{sec:agn-low-deg} to derive the norm of the LDLR under the additive Gaussian noise model.
It is more convenient to prove these in a different order than they were presented in Section~\ref{sec:agn-low-deg}, since one identity is especially useful for proving the others.

\begin{proof}[Proof of Proposition~\ref{prop:gaussian-ibp} (Integration by Parts)]
    Recall that we are assuming a function $f: \RR \to \RR$ is $k$ times continuously differentiable and $f$ and its derivatives are $O(\exp(|x|^\alpha))$ for $\alpha \in (0, 2)$, and we want to show the identity
    \[ \Ex_{y \sim \sN(0, 1)}[h_k(y) f(y)] = \Ex_{y \sim \sN(0, 1)}\left[ \frac{d^k f}{dy^k}(y)\right]. \]
    We proceed by induction.
    Since $h_0(y) = 1$, the case $k = 0$ follows immediately.
    We also verify by hand the case $k = 1$, with $h_1(y) = y$:
    \[ \Ex_{y \sim \sN(0, 1)}\left[yf(y) \right] = \frac{1}{\sqrt{2\pi}} \int_{-\infty}^\infty f(y) \cdot ye^{-y^2 / 2}dy = \frac{1}{\sqrt{2\pi}} \int_{-\infty}^\infty f^\prime(y) e^{-y^2 / 2}dy = \Ex_{y \sim \sN(0, 1)}\left[f^\prime(y) \right], \]
    where we have used ordinary integration by parts.
    
    Now, suppose the identity holds for all degrees smaller than some $k \geq 2$, and expand the degree $k$ case according to the recursion:
    \begin{align*}
        \Ex_{y \sim \sN(0, 1)}[h_k(y) f(y)] 
        &= \Ex_{y \sim \sN(0, 1)}[y h_{k - 1}(y) f(y)] - \Ex_{y \sim \sN(0, 1)}[h_{k - 1}^\prime(y) f(y)] \\
        &= \Ex_{y \sim \sN(0, 1)}[h_{k - 1}^\prime(y)f(y)] + \Ex_{y \sim \sN(0, 1)}[h_{k - 1}(y)f^\prime(y)] - \Ex_{y \sim \sN(0, 1)}[h_{k - 1}^\prime(y) f(y)] \\
        &= \Ex_{y \sim \sN(0, 1)}[h_{k - 1}(y)f^\prime(y)] \\
        &= \Ex_{y \sim \sN(0, 1)}\left[\frac{d^k f}{dy^k}(y)\right],
    \end{align*}
    where we have used the degree 1 and then the degree $k - 1$ hypotheses.
\end{proof}

\begin{proof}[Proof of Proposition~\ref{prop:hermite-translation} (Translation Identity)]
    Recall that we want to show, for all $k \geq 0$ and $\mu \in \RR$, that
    \[ \Ex_{y \sim \sN(\mu, 1)}[h_k(y)] = \mu^k. \]
    We proceed by induction on $k$.
    Since $h_0(y) = 1$, the case $k = 0$ is immediate.
    Now, suppose the identity holds for degree $k - 1$, and expand the degree $k$ case according to the recursion:
    \begin{align*}
    \Ex_{y \sim \sN(\mu, 1)}[h_k(y)] 
    &= \Ex_{y \sim \sN(0, 1)}[h_k(\mu + y)] \\
    &= \mu \Ex_{y \sim \sN(0, 1)}[h_{k - 1}(\mu + y)] + \Ex_{y \sim \sN(0, 1)}[y h_{k - 1}(\mu + y)] - \Ex_{y \sim \sN(0, 1)}[h_{k - 1}^\prime(\mu + y)]
    \intertext{which may be simplified by the Gaussian integration by parts to}
    &= \mu \Ex_{y \sim \sN(0, 1)}[h_{k - 1}(\mu + y)] + \Ex_{y \sim \sN(0, 1)}[h_{k - 1}^\prime(\mu + y)] - \Ex_{y \sim \sN(0, 1)}[h_{k - 1}^\prime(\mu + y)] \\
    &= \mu \Ex_{y \sim \sN(0, 1)}[h_{k - 1}(\mu + y)],
    \end{align*}
    and the result follows by the inductive hypothesis.
\end{proof}

\begin{proof}[Proof of Proposition~\ref{prop:hermite-gf} (Generating Function)]
    Recall that we want to show the series identity for any $x, y \in \RR$,
    \[ \exp\left(xy - \frac{1}{2}x^2\right) = \sum_{k = 0}^\infty \frac{1}{k!}x^k h_k(y). \]
    For any fixed $x$, the left-hand side belongs to $L^2(\sN(0, 1))$ in the variable $y$.
    Thus this is merely a claim about the Hermite coefficients of this function, which may be computed by taking inner products.
    Namely, let us write
    \[ f_x(y) \colonequals \exp\left(xy - \frac{1}{2}x^2\right), \]
    then using Gaussian integration by parts,
    \[ \la f_x, \what{h}_k \ra = \frac{1}{\sqrt{k!}}\Ex_{y \sim \sN(0, 1)}\left[f_x(y) h_k(y)\right] = \frac{1}{\sqrt{k!}}\Ex_{y \sim \sN(0, 1)}\left[\frac{d^k f_x}{dy^k}(y) \right] = \frac{1}{\sqrt{k!}}x^k \Ex_{y \sim \sN(0, 1)}\left[f_x(y) \right]. \]
    A simple calculation shows that $\EE_{y \sim \sN(0, 1)}[f_x(y)] = 1$ (this is an evaluation of the Gaussian moment-generating function that we have mentioned in the main text), and then by the Hermite expansion
    \[ f_x(y) = \sum_{k = 0}^\infty \la f_x, \what{h}_k \ra \what{h}_k(y) = \sum_{k = 0}^\infty \frac{1}{k!}x^k h_k(y), \]
    giving the result.
\end{proof}

\section{Subgaussian Random Variables}
\label{app:subg}

Many of our rigorous arguments rely on the concept of \emph{subgaussianity}, which we now define. See, e.g., \cite{rig-notes} for more details.

\begin{definition}\label{def:subg}
For $\sigma^2 > 0$, we say that a real-valued random variable $\pi$ is $\sigma^2$-\emph{subgaussian} if $\EE[\pi] = 0$ and for all $t \in \RR$, the moment-generating function $M(t) = \EE[\exp(t \pi)]$ of $\pi$ exists and is bounded by $M(t) \le \exp(\sigma^2 t^2 / 2)$.
\end{definition}
\noindent Here $\sigma^2$ is called the \emph{variance proxy}, which is not necessarily equal to the variance of $\pi$ (although it can be shown that $\sigma^2 \ge \mathrm{Var}[\pi]$). The name \emph{subgaussian} refers to the fact that $\exp(\sigma^2 t^2 / 2)$ is the moment-generating function of $\sN(0,\sigma^2)$. 

The following are some examples of (laws of) subgaussian random variables. Clearly, $\sN(0,\sigma^2)$ is $\sigma^2$-subgaussian. By Hoeffding's lemma, any distribution supported on an interval $[a,b]$ is $(b-a)^2/4$-subgaussian. In particular, the Rademacher distribution $\Unif(\{\pm 1\})$ is $1$-subgaussian. Note also that the sum of $n$ independent $\sigma^2$-subgaussian random variables is $\sigma^2 n$-subgaussian.

Subgaussian random variables admit the following bound on their absolute moments; see Lemmas~1.3 and 1.4 of \cite{rig-notes}.
\begin{proposition}\label{prop:subg-mom}
If $\pi$ is $\sigma^2$-subgaussian then
$$\EE[|\pi|^k] \le (2\sigma^2)^{k/2} k \Gamma(k/2)$$
for every integer $k \ge 1$.
\end{proposition}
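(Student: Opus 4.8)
The plan is the standard three-step route: convert subgaussianity into a tail bound, express the $k$-th absolute moment as an integral of the tail via the layer-cake formula, and evaluate the resulting integral by a change of variables that produces the Gamma function.

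First I would establish the tail bound $\Pr\{|\pi| \ge t\} \le 2\exp(-t^2/(2\sigma^2))$ for all $t \ge 0$. For $t > 0$ and any $s > 0$, Markov's inequality applied to $\exp(s\pi)$ gives $\Pr\{\pi \ge t\} \le e^{-st}\EE[\exp(s\pi)] \le e^{-st}\exp(\sigma^2 s^2/2)$, using Definition~\ref{def:subg}; optimizing by taking $s = t/\sigma^2$ yields $\Pr\{\pi \ge t\} \le \exp(-t^2/(2\sigma^2))$. Since $-\pi$ is also $\sigma^2$-subgaussian, the same bound holds for $\Pr\{-\pi \ge t\}$, and a union bound gives the claimed two-sided tail estimate.

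Next I would write, using the layer-cake representation (equivalently, integration by parts),
\[
\EE[|\pi|^k] = \int_0^\infty \Pr\{|\pi|^k \ge s\}\,ds = \int_0^\infty k\, t^{k-1}\,\Pr\{|\pi| \ge t\}\,dt \le 2k\int_0^\infty t^{k-1}\exp\!\left(-\frac{t^2}{2\sigma^2}\right)dt.
\]
Then I would substitute $u = t^2/(2\sigma^2)$, so that $t = (2\sigma^2 u)^{1/2}$ and $dt = \tfrac12 (2\sigma^2)^{1/2} u^{-1/2}\,du$, turning the integral into $(2\sigma^2)^{k/2}\int_0^\infty u^{k/2 - 1} e^{-u}\,du = (2\sigma^2)^{k/2}\,\Gamma(k/2)$, and multiplying by the prefactor $k$ gives exactly $\EE[|\pi|^k] \le (2\sigma^2)^{k/2} k\,\Gamma(k/2)$.

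There is no serious obstacle here; this is a routine computation, and the only points requiring a little care are the Chernoff optimization in the tail bound and bookkeeping the constants in the substitution (in particular checking that the exponents of $2\sigma^2$ combine to $k/2$ and that the power of $u$ becomes $k/2 - 1$). Since $\sigma^2 > 0$ by hypothesis there is no degenerate case to handle separately. Alternatively, one could cite Lemmas~1.3 and~1.4 of \cite{rig-notes} directly, as the statement already does.
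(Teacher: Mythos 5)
Your proposal is correct and is precisely the standard argument behind the result the paper cites (Lemmas~1.3 and~1.4 of \cite{rig-notes}): Chernoff optimization plus symmetry for the two-sided tail bound, the layer-cake formula, and the substitution $u = t^2/(2\sigma^2)$, where the factor $2$ from the two-sided tail and the factor $\tfrac{1}{2}$ from the Jacobian cancel to give exactly $k(2\sigma^2)^{k/2}\Gamma(k/2)$. The paper itself gives no proof beyond the citation, so there is nothing further to compare; your write-up supplies the omitted argument correctly.
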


\noindent Here $\Gamma(\cdot)$ denotes the gamma function which, recall, is defined for all positive real numbers and satisfies $\Gamma(k) = (k-1)!$ when $k$ is a positive integer. We will need the following property of the gamma function.
\begin{proposition}\label{prop:gamma}
For all $x > 0$ and $a > 0$,
$$\frac{\Gamma(x+a)}{\Gamma(x)} \le (x+a)^a.$$
\end{proposition}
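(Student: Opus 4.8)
The plan is to prove the bound first in the case $0 < a \le 1$ using the logarithmic convexity of $\Gamma$, and then to bootstrap to all $a > 0$ via the functional equation $\Gamma(z+1) = z\Gamma(z)$.

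\emph{Step 1 (the case $0 < a \le 1$).} Recall that $\Gamma$ is logarithmically convex on $(0,\infty)$, i.e.\ $t \mapsto \log\Gamma(t)$ is convex; this is the standard consequence of H\"older's inequality applied to $\Gamma(t) = \int_0^\infty u^{t-1}e^{-u}\,du$ (when $t = (1-\theta)t_0 + \theta t_1$, split $u^{t-1}e^{-u} = (u^{t_0-1}e^{-u})^{1-\theta}(u^{t_1-1}e^{-u})^{\theta}$ and apply H\"older with exponents $\tfrac{1}{1-\theta},\tfrac{1}{\theta}$). For $0 < a \le 1$ I would write $x+a = (1-a)\,x + a\,(x+1)$ as a convex combination and use convexity:
\[ \log\Gamma(x+a) \;\le\; (1-a)\log\Gamma(x) + a\log\Gamma(x+1) \;=\; \log\Gamma(x) + a\log x, \]
using $\Gamma(x+1) = x\,\Gamma(x)$. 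Exponentiating gives $\Gamma(x+a) \le x^a\,\Gamma(x) \le (x+a)^a\,\Gamma(x)$, which is in fact slightly stronger than what is asked.

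\emph{Step 2 (general $a > 0$).} Write $a = m + b$ with $m \colonequals \lceil a\rceil - 1 \in \{0,1,2,\dots\}$ and $b \colonequals a - m \in (0,1]$. Iterating $\Gamma(z+1) = z\,\Gamma(z)$ gives
\[ \Gamma(x+a) = \Gamma(x+b+m) = \Big(\prod_{j=0}^{m-1}(x+b+j)\Big)\,\Gamma(x+b). \]
Each factor obeys $x+b+j \le x+b+(m-1) < x+(b+m) = x+a$, so the product is at most $(x+a)^m$; and Step 1 gives $\Gamma(x+b) \le (x+b)^b\,\Gamma(x) \le (x+a)^b\,\Gamma(x)$. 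Multiplying the two bounds yields $\Gamma(x+a) \le (x+a)^{m+b}\,\Gamma(x) = (x+a)^a\,\Gamma(x)$, as claimed. (When $m=0$ the product is empty and this reduces to Step 1.)

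I do not expect any genuine obstacle here; the only point requiring care is that the convex-combination step is valid precisely because $a \le 1$, which is exactly why the elementary bootstrap of Step 2 is needed. A slicker-looking alternative — convexity of $\log\Gamma$ gives $\log\Gamma(x+a) - \log\Gamma(x) \le a\,\psi(x+a)$ with $\psi$ the digamma function, after which one invokes $\psi(u) \le \log u$ for $u>0$ — merely relocates the work into proving that last inequality, so I would prefer the self-contained argument above.
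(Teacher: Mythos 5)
Your proof is correct, and its skeleton is the same as the paper's: reduce to the case of a shift in $(0,1]$ and then bootstrap to general $a$ via the functional equation $\Gamma(z+1)=z\Gamma(z)$. The difference is in how the fractional-part case is handled. The paper simply cites Gautschi's inequality, $\Gamma(x+s)/\Gamma(x) < (x+s)^s$ for $s \in (0,1)$, as a known fact, whereas you prove the needed bound from scratch via logarithmic convexity of $\Gamma$ (H\"older applied to the integral representation), obtaining $\Gamma(x+a) \le x^a\,\Gamma(x)$ for $a \in (0,1]$ --- this is Wendel's inequality, which is slightly stronger than the form of Gautschi's inequality the paper invokes (it has $x^a$ rather than $(x+a)^a$ on the right). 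Your Step 2 bookkeeping ($a = m+b$ with $b \in (0,1]$, each factor of the telescoping product bounded by $x+a$) is also carried out explicitly where the paper leaves it implicit. So your argument is self-contained where the paper's is citation-based, at the cost of a few extra lines; both are valid, and yours has the minor advantage of not requiring the reader to look up Gautschi's inequality.
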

\begin{proof}
This follows from two standard properties of the gamma function. The first is that (similarly to the factorial) $\Gamma(x+1)/\Gamma(x) = x$ for all $x > 0$. The second is \emph{Gautschi's inequality}, which states that $\Gamma(x+s)/\Gamma(x) < (x+s)^{s}$ for all $x > 0$ and $s \in (0,1)$.
\end{proof}

In the context of the spiked Wigner model (Section~\ref{sec:spiked-matrix}), we now prove that subgaussian spike priors admit a local Chernoff bound (Definition~\ref{def:local-chernoff}).

\begin{proposition}\label{prop:local-chernoff}
Suppose $\pi$ is $\sigma^2$-subgaussian (for some constant $\sigma^2 > 0$) with $\EE[\pi] = 0$ and $\EE[\pi^2] = 1$. Let $(\sX_n)$ be the spike prior that draws each entry of $\bx$ i.i.d.\ from $\pi$ (where $\pi$ does not depend on $n$). Then $(\sX_n)$ admits a local Chernoff bound.
\end{proposition}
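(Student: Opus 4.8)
The plan is to reduce to a Chernoff bound for a sum of i.i.d.\ terms, the only subtlety being that we must control the exponent sharply enough to recover the variance proxy $n$. Write $\langle \bx^1,\bx^2\rangle = \sum_{i=1}^n s_i$ where $s_i := x_i^1 x_i^2$, so the $s_i$ are i.i.d.\ copies of $\pi_1\pi_2$ with $\pi_1,\pi_2$ independent draws from $\pi$; note $\EE[s_i]=0$ and $\EE[s_i^2]=1$. A product of two subgaussian variables is not subgaussian in general (only subexponential), so we cannot simply quote a subgaussian tail bound; instead I would establish a bound on the moment-generating function (MGF) $M(\lambda) := \EE[\exp(\lambda s_1)]$ that is sharp to second order near $\lambda = 0$.

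First I would show $M$ is finite in a neighborhood of the origin. Conditioning on $\pi_2$ and using subgaussianity of $\pi_1$ gives $M(\lambda) \le \EE_{\pi_2}[\exp(\tfrac{\sigma^2\lambda^2}{2}\pi_2^2)]$; then, writing $\exp(v\pi_2^2) = \EE_{g\sim\sN(0,1)}[\exp(\sqrt{2v}\,\pi_2 g)]$ for $v\ge 0$, exchanging the expectations, and using subgaussianity of $\pi_2$ together with $\EE_{g}[\exp(cg^2)]=(1-2c)^{-1/2}$ for $c<\tfrac12$, one obtains
\[
M(\lambda) \;\le\; \big(1-\sigma^4\lambda^2\big)^{-1/2} \qquad\text{for } |\lambda| < 1/\sigma^2 .
\]
Since an MGF is smooth ($C^\infty$) on the interior of the interval where it is finite, with derivatives obtained by differentiating under the expectation, $M$ is $C^2$ near $0$ with $M(0)=1$, $M'(0)=\EE[s_1]=0$ and $M''(0)=\EE[s_1^2]=1$. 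Hence $\psi(\lambda) := \log M(\lambda) = \tfrac12\lambda^2 + o(\lambda^2)$ as $\lambda\to 0$, so for every $\eta'>0$ there is $\lambda_0>0$, depending only on $\pi$ and $\eta'$, such that $\psi(\lambda)\le \tfrac{1+\eta'}{2}\lambda^2$ for all $|\lambda|\le\lambda_0$. The identical statement holds with $s_1$ replaced by $-s_1=(-\pi_1)\pi_2$, since $-\pi$ is again mean-zero, variance-one and $\sigma^2$-subgaussian.

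Finally I would run the Chernoff bound: for $0<\lambda\le\lambda_0$,
\[
\Pr\Big[\textstyle\sum_{i=1}^n s_i \ge t\Big] \;\le\; \exp\big(-\lambda t + n\psi(\lambda)\big) \;\le\; \exp\Big(-\lambda t + \tfrac{(1+\eta')n}{2}\lambda^2\Big),
\]
and the choice $\lambda = t/((1+\eta')n)$, which lies in $(0,\lambda_0]$ as long as $t\le \lambda_0 n$, gives $\Pr[\sum_i s_i\ge t]\le \exp(-\tfrac{t^2}{2(1+\eta')n})$. Applying the same argument to $\sum_i(-s_i)$ and taking a union bound yields $\Pr[|\langle\bx^1,\bx^2\rangle|\ge t]\le 2\exp(-\tfrac{t^2}{2(1+\eta')n})$ for all $t\in[0,\lambda_0 n]$ and all $n$. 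Given $\eta>0$ (we may assume $\eta<1$, else the claim is trivial), choosing $\eta'$ small enough that $(1+\eta')^{-1}\ge 1-\eta$ and setting $\delta := \lambda_0$, $C := 2$ establishes the local Chernoff bound; note that $\lambda_0$, $\delta$ and $C$ depend only on $\pi$ (through $\sigma^2$) and $\eta$, not on $n$, which is exactly the uniformity required by Definition~\ref{def:local-chernoff}.

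The main obstacle is getting the exponent $\tfrac{1}{2n}(1-\eta)t^2$ with the correct variance proxy $n$: naively treating each $s_i$ as $\sigma^2$-subgaussian would both be incorrect (products of subgaussians are only subexponential) and, even as a heuristic, would produce the wrong constant $\sigma^4 n$ in place of $n$. Everything therefore rests on the second-order Taylor expansion $\psi(\lambda)=\tfrac12\lambda^2+o(\lambda^2)$, which requires knowing that $M$ is finite --- hence smooth --- on a neighborhood of $0$; the bound $M(\lambda)\le(1-\sigma^4\lambda^2)^{-1/2}$ above supplies this, though one could instead bound $M$ via the moment estimate of Proposition~\ref{prop:subg-mom} together with the Legendre duplication formula for $\Gamma$.
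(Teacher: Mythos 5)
Your proposal is correct and follows essentially the same route as the paper's proof: both establish that the moment-generating function of the product $\pi_1\pi_2$ is finite on a neighborhood of the origin, use $M(0)=1$, $M'(0)=0$, $M''(0)=1$ to control it by $\exp\bigl(\tfrac{t^2}{2(1-\eta)}\bigr)$ near zero, and then run the standard Chernoff argument with the optimal exponent $\alpha = \Theta(t/n)$, which stays in the valid range precisely when $t = O(n)$. The only (minor) difference is how finiteness of the MGF is obtained -- the paper cites the fact that $\pi^2$ is subexponential, while you derive the explicit bound $M(\lambda) \le (1-\sigma^4\lambda^2)^{-1/2}$ via Gaussian linearization -- and this is a pleasant self-contained alternative but not a different proof strategy.
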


\begin{proof}
Since $\pi$ is subgaussian, $\pi^2$ is \emph{subexponential}, which implies $\EE[\exp(t \pi^2)] < \infty$ for all $|t| \le s$ for some $s > 0$ (see e.g., Lemma~1.12 of \cite{rig-notes}).

Let $\pi, \pi^\prime$ be independent copies of $\pi$, and set $\Pi = \pi \pi^\prime$. The moment-generating function of $\Pi$ is
\[ M(t) = \EE[\exp(t \Pi)] = \EE_\pi \EE_{\pi'}[\exp(t \pi \pi')] \le \EE_\pi\left[\exp\left(\sigma^2 t^2 \pi^2/2\right)\right] < \infty \]
provided $\frac{1}{2}\sigma^2 t^2 < s$, i.e.\ $|t| < \sqrt{2s/\sigma^2}$. Thus $M(t)$ exists in an open interval containing $t=0$, which implies $M'(0) = \EE[\Pi] = 0$ and $M''(0) = \EE[\Pi^2] = 1$ (this is the defining property of the moment-generating function: its derivatives at zero are the moments).

Let $\eta > 0$ and $f(t) \colonequals \exp\left(\frac{t^2}{2(1-\eta)}\right)$. Since $M(0) = 1, M'(0) = 0, M''(0) = 1$ and, as one may check, $f(0) = 1, f'(0) = 0, f''(0) = \frac{1}{1-\eta} > 1$, there exists $\delta > 0$ such that, for all $t \in [-\delta,\delta]$, $M(t)$ exists and $M(t) \le f(t)$.

We then apply the standard Chernoff bound argument to $\la \bx^1,\bx^2 \ra = \sum_{i=1}^n \Pi_i$ where $\Pi_1,\ldots,\Pi_n$ are i.i.d.\ copies of $\Pi$. For any $\alpha > 0$,
\begin{align*}
\Pr\left\{\la \bx^1,\bx^2 \ra \ge t\right\} &= \Pr\left\{\exp(\alpha \la \bx^1,\bx^2 \ra) \ge \exp(\alpha t)\right\}\\
&\le \exp(-\alpha t) \EE[\exp(\alpha \la \bx^1,\bx^2 \ra)] \tag{by Markov's inequality}\\
&= \exp(-\alpha t) \EE\left[\exp\left(\alpha \sum_{i=1}^n \Pi_i\right)\right]\\
&= \exp(-\alpha t) [M(\alpha)]^n\\
&\le \exp(-\alpha t) [f(\alpha)]^n \tag{provided $\alpha \le \delta$} \\
&= \exp(-\alpha t) \exp\left(\frac{\alpha^2 n}{2(1-\eta)}\right).
\end{align*}
Taking $\alpha = (1-\eta)t/n$,
$$\Pr\left\{\la \bx^1,\bx^2 \ra \ge t\right\} \le \exp\left(-\frac{1}{n}(1-\eta)t^2 + \frac{1}{2n}(1-\eta)t^2\right) = \exp\left(-\frac{1}{2n}(1-\eta)t^2\right)$$
as desired. This holds provided $\alpha \le \delta$, i.e.\ $t \le \delta n/(1-\eta)$. A symmetric argument with $-\Pi$ in place of $\Pi$ holds for the other tail, $\Pr\left\{\la \bx^1,\bx^2 \ra \le -t\right\}$.
\end{proof}

\section{Formal Consequences of the Low-Degree Method}

Here we provide the proofs of Theorems~\ref{thm:thresh-poly-hard} and \ref{thm:spectral-hard}, which show that if the LDLR predicts hardness then polynomial thresholding and low-degree spectral methods (respectively) must fail in a particular sense. We first discuss \emph{hypercontractivity}, the key ingredient in the proofs.

\subsection{Hypercontractivity}
\label{app:hyp}

The following hypercontractivity result states that the moments of low-degree polynomials of i.i.d.\ random variables must behave somewhat reasonably. The Rademacher version is the \emph{Bonami lemma} from \cite{boolean-book}, and the Gaussian version appears in \cite{janson-book} (see Theorem~5.10 and Remark~5.11 of \cite{janson-book}). We refer the reader to \cite{boolean-book} for a general discussion of hypercontractivity.

\begin{proposition}[Bonami Lemma]
\label{prop:bonami}
Let $\bx = (x_1,\ldots,x_n)$ have either i.i.d.\ $\sN(0,1)$ or i.i.d.\ Rademacher (uniform $\pm 1$) entries, and let $f: \RR^n \to \RR$ be a polynomial of degree $k$. Then
$$\EE[f(x)^4] \le 3^{2k} \,\EE[f(x)^2]^2.$$
\end{proposition}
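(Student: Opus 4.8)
The plan is to prove the Rademacher case first, by induction on the number of variables $n$, and then deduce the Gaussian case by a central limit theorem argument. Throughout, "degree $k$" means degree at most $k$, and the bound to be shown is $\EE[f(\bx)^4] \le 3^{2k}(\EE[f(\bx)^2])^2 = 9^k(\EE[f(\bx)^2])^2$.

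\emph{Rademacher case.} Here we may assume $f$ is multilinear: on $\{\pm 1\}^n$ we have $x_i^2 = 1$, so replacing every occurrence of $x_i^2$ by $1$ produces a multilinear polynomial $\tilde f$ agreeing with $f$ on the cube and with $\deg \tilde f \le \deg f \le k$, while all four relevant expectations depend only on values on the cube. We induct on $n$, the case $n = 0$ being immediate since then $f$ is a constant and $\EE[f^4] = (\EE[f^2])^2$. For the inductive step, write $f(\bx) = g(x_1,\dots,x_{n-1}) + x_n\, h(x_1,\dots,x_{n-1})$ with $g,h$ multilinear, $\deg g \le k$ and $\deg h \le k-1$. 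Using independence of $x_n$ and the moments $\EE[x_n] = \EE[x_n^3] = 0$, $\EE[x_n^2] = \EE[x_n^4] = 1$, one gets
$$\EE[f^4] = \EE[g^4] + 6\,\EE[g^2 h^2] + \EE[h^4], \qquad \big(\EE[f^2]\big)^2 = \big(\EE[g^2] + \EE[h^2]\big)^2.$$
By the inductive hypothesis $\EE[g^4] \le 9^k (\EE[g^2])^2$ and $\EE[h^4] \le 9^{k-1}(\EE[h^2])^2$, and by Cauchy--Schwarz together with these, $\EE[g^2 h^2] \le \sqrt{\EE[g^4]\,\EE[h^4]} \le 3\cdot 9^{k-1}\,\EE[g^2]\,\EE[h^2]$. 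Substituting, $\EE[f^4] \le 9^k(\EE[g^2])^2 + 18\cdot 9^{k-1}\,\EE[g^2]\,\EE[h^2] + 9^{k-1}(\EE[h^2])^2$, which is at most $9^k\big(\EE[g^2] + \EE[h^2]\big)^2 = 9^k(\EE[f^2])^2$ since $18\cdot 9^{k-1} = 2\cdot 9^k$ and $9^{k-1} \le 9^k$. (The middle term is exactly tight, which is why the constant is $3^2$.)

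\emph{Gaussian case.} Let $\bx$ be standard Gaussian, and for each $m \in \NN$ let $(r_{ij})_{i \in [n],\, j \in [m]}$ be i.i.d.\ Rademacher variables. Put $\xi_i^{(m)} \colonequals m^{-1/2}\sum_{j=1}^m r_{ij}$ and $P_m \colonequals f\big(\xi_1^{(m)},\dots,\xi_n^{(m)}\big)$. Then $P_m$ is a polynomial of degree $\le k$ in the $nm$ i.i.d.\ Rademacher variables $r_{ij}$, so the Rademacher case gives $\EE[P_m^4] \le 9^k (\EE[P_m^2])^2$. Now let $m \to \infty$. First, $\EE[P_m^2]$ depends on $f$ and on the joint moments of $(\xi_1^{(m)},\dots,\xi_n^{(m)})$ of total order $\le 2k$ only; since each such moment converges to the corresponding Gaussian moment as $m \to \infty$ (the elementary moment form of the CLT), $\EE[P_m^2] \to \EE[f(\bx)^2]$. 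Second, by the multivariate CLT $(\xi_1^{(m)},\dots,\xi_n^{(m)})$ converges in distribution to $\bx$, so $P_m$ converges in distribution to $f(\bx)$ by the continuous mapping theorem; as $t \mapsto t^4$ is continuous and nonnegative, the portmanteau theorem yields $\EE[f(\bx)^4] \le \liminf_{m\to\infty} \EE[P_m^4]$. Combining these, $\EE[f(\bx)^4] \le \liminf_{m} 9^k (\EE[P_m^2])^2 = 9^k (\EE[f(\bx)^2])^2$, as claimed.

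\emph{Main obstacle.} The Rademacher induction is entirely routine once one reduces to multilinear form. The only point requiring care is the passage to the limit in the Gaussian case: one might worry about controlling $\EE[P_m^4]$ in the limit, but since only an upper bound on $\EE[f(\bx)^4]$ is needed, it suffices to use lower semicontinuity of $X \mapsto \EE[X^4]$ under weak convergence (the nonnegative continuous case of the portmanteau theorem), so no uniform-integrability estimate is required. (Alternatively, one can prove the Gaussian case directly by induction on $n$, decomposing $f$ in Hermite polynomials of $x_n$ and using Gaussian integration by parts, but the reduction from the Rademacher case above is cleaner to state.)
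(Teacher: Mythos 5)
Your proof is correct. Note that the paper does not actually prove this proposition; it is quoted as the Bonami lemma from the cited references (\cite{boolean-book} for the Rademacher case, \cite{janson-book} for the Gaussian case), so there is no in-paper argument to compare against. Your Rademacher induction is exactly the standard textbook proof (reduce to multilinear, split off $x_n$, use the fourth-moment expansion plus Cauchy--Schwarz on the cross term), and your passage to the Gaussian case by replacing each coordinate with a normalized sum of $m$ Rademachers is a standard two-point invariance/CLT reduction; the one delicate step, controlling $\EE[P_m^4]$ in the limit, is handled correctly by using lower semicontinuity of the nonnegative fourth moment under weak convergence, so that only the convergence of $\EE[P_m^2]$ (a finite collection of moments of order $\le 2k$) needs to be established exactly.
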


\noindent We will combine this with the following standard second moment method.

\begin{proposition}[Paley-Zygmund Inequality]
\label{prop:P-Z}
If $Z \ge 0$ is a random variable with finite variance, and $0 \le \theta \le 1$, then
$$\mathrm{Pr}\left\{Z > \theta\, \EE[Z]\right\} \ge (1-\theta)^2 \frac{\EE[Z]^2}{\EE[Z^2]}.$$
\end{proposition}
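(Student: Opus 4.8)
The plan is to run the classical "one-sided Cauchy--Schwarz" argument, which recovers a lower bound on a small-ball probability from control of the first two moments. Write $m \colonequals \EE[Z]$, which is finite and nonnegative since $Z \ge 0$ and $\mathrm{Var}[Z] < \infty$ (indeed $m \le \sqrt{\EE[Z^2]} < \infty$); if $m = 0$ then $Z = 0$ almost surely and the claimed inequality is trivial, so assume $m > 0$. Let $E \colonequals \{Z > \theta m\}$ be the event in the statement. The first step is to split the first moment across $E$ and its complement:
$$\EE[Z] = \EE\bigl[Z\,\One_{E^c}\bigr] + \EE\bigl[Z\,\One_{E}\bigr].$$

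On $E^c$ we have $Z \le \theta m$, and since $\theta m \ge 0$ this gives $\EE[Z\,\One_{E^c}] \le \theta m\,\EE[\One_{E^c}] \le \theta m$. For the second term I would apply the Cauchy--Schwarz inequality to the pointwise product $Z \cdot \One_E$, using that $\One_E = \One_E^2$ and $\EE[\One_E] = \mathrm{Pr}(E)$:
$$\EE\bigl[Z\,\One_E\bigr] \le \sqrt{\EE[Z^2]}\,\sqrt{\EE[\One_E^2]} = \sqrt{\EE[Z^2]}\,\sqrt{\mathrm{Pr}(E)}.$$

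Combining the two bounds yields $(1-\theta)\,m \le \sqrt{\EE[Z^2]}\,\sqrt{\mathrm{Pr}(E)}$. The left-hand side is nonnegative because $\theta \le 1$ and $m \ge 0$, so I may square both sides and rearrange to obtain $\mathrm{Pr}(E) \ge (1-\theta)^2\, m^2 / \EE[Z^2]$, which is exactly the assertion. There is no real obstacle: the argument is three lines once the first moment is split, and the only subtleties are the bookkeeping for the degenerate case $\EE[Z] = 0$ and the verification that $\EE[Z] < \infty$ (so the threshold $\theta\,\EE[Z]$ and the event $E$ are well-defined), both of which follow immediately from $Z \ge 0$ together with the finite-variance hypothesis. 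It is worth noting which hypotheses do the work: nonnegativity of $Z$ is what lets us bound $\EE[Z\,\One_{E^c}]$ without an absolute value and guarantees $m \ge 0$, while $\theta \le 1$ is precisely what allows squaring $(1-\theta)m \le \sqrt{\EE[Z^2]\,\mathrm{Pr}(E)}$ without reversing the inequality.
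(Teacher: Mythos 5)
Your proof is correct: it is the standard Paley--Zygmund argument (split $\EE[Z]$ over the event $\{Z > \theta\,\EE[Z]\}$ and its complement, bound the complement term by $\theta\,\EE[Z]$ using nonnegativity, apply Cauchy--Schwarz to the other term, and square). The paper states this proposition as a standard fact without proof, so there is nothing to compare against; your handling of the degenerate case $\EE[Z]=0$ and of the sign conditions needed to square the inequality is careful and complete.
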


\noindent By combining Propositions~\ref{prop:P-Z} and \ref{prop:bonami}, we immediately have the following.

\begin{corollary}\label{cor:hyp}
Let $\bx = (x_1,\ldots,x_n)$ have either i.i.d.\ $\sN(0,1)$ or i.i.d.\ Rademacher (uniform $\pm 1$) entries, and let $f: \RR^n \to \RR$ be a polynomial of degree $k$. Then, for $0 \le \theta \le 1$,
$$\Pr\left\{f(x)^2 > \theta\, \EE[f(x)^2]\right\} \ge (1-\theta)^2 \frac{\EE[f(x)^2]^2}{\EE[f(x)^4]} \ge \frac{(1-\theta)^2}{3^{2k}}.$$
\end{corollary}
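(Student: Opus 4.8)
The plan is to apply the Paley--Zygmund inequality (Proposition~\ref{prop:P-Z}) to the nonnegative random variable $Z \colonequals f(\bx)^2$, and then bound the second-moment ratio that appears there using the Bonami lemma (Proposition~\ref{prop:bonami}). The corollary is precisely the composition of these two facts, so the ``proof'' is really just bookkeeping.

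First I would dispose of the mild hypotheses. Since $\bx$ has i.i.d.\ entries with finite moments of all orders (Gaussian or Rademacher) and $f$ is a polynomial, $Z = f(\bx)^2$ has finite variance, so Proposition~\ref{prop:P-Z} is applicable. We may also assume $f \not\equiv 0$, as otherwise every quantity in the claimed chain of inequalities is $0$ (with the middle expression interpreted as a vacuous $0/0$ that we simply skip), and there is nothing to prove; under this assumption $\EE[Z] = \EE[f(\bx)^2] > 0$.

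Next I would apply Proposition~\ref{prop:P-Z} with this $Z$ and the given $\theta \in [0,1]$, which gives
$$\Pr\left\{f(\bx)^2 > \theta\,\EE[f(\bx)^2]\right\} \;\ge\; (1-\theta)^2 \,\frac{\EE[f(\bx)^2]^2}{\EE[f(\bx)^4]},$$
establishing the first inequality in the statement. For the second inequality I would invoke Proposition~\ref{prop:bonami} in the form $\EE[f(\bx)^4] \le 3^{2k}\,\EE[f(\bx)^2]^2$, and substitute this upper bound on the denominator into the ratio $\EE[f(\bx)^2]^2/\EE[f(\bx)^4]$, which yields the claimed lower bound $(1-\theta)^2 / 3^{2k}$.

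There is no genuine obstacle: all the analytic content lives in the two cited propositions, and the argument is purely formal. The only step that warrants a moment's care is the degenerate case $f \equiv 0$, where the middle term is an indeterminate form and must be excluded by hand, as noted above.
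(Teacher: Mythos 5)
Your proof is correct and matches the paper's argument exactly: the corollary is stated there as an immediate consequence of applying the Paley--Zygmund inequality to $Z = f(\bx)^2$ and bounding the fourth moment via the Bonami lemma. Your extra remark about the degenerate case $f \equiv 0$ is fine but not something the paper bothers with.
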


\begin{remark}\label{rem:low-dom}
One rough interpretation of Corollary~\ref{cor:hyp} is that if $f$ is degree $k$, then $\EE[f(x)^2]$ cannot be dominated by an event of probability smaller than roughly $3^{-2k}$.
\end{remark}

\subsection{Lower Bound Against Thresholding Polynomials}
\label{app:thresh-poly-hard}

\noindent Using the above tools, we can now prove Theorem~\ref{thm:thresh-poly-hard}.

\begin{proof}[Proof of Theorem~\ref{thm:thresh-poly-hard}]
Consider the degree-$2kd$ polynomial $f^{2k}$. Using Jensen's inequality,
$$\EE_\PP[f(\bY)^{2k}] \ge \left(\EE_\PP[f(\bY)]\right)^{2k} \ge A^{2k}.$$
If $\EE_\QQ[f(\bY)^{4k}] \le B^{4k}$, then \eqref{eq:EQ-bound} holds immediately.
Otherwise, let $\theta \defeq B^{4k}/\EE_\QQ[f(\bY)^{4k}] \le 1$. Then, we have
\begin{align*}
\delta &\ge \QQ(|f(\bY)| \ge B)\\
&= \QQ(f(\bY)^{4k} \ge B^{4k})\\
&= \QQ(f(\bY)^{4k} \ge \theta\, \EE_\QQ[f(\bY)^{4k}])\\
&\ge \frac{(1-\theta)^2}{3^{4kd}}. \tag{by Proposition~\ref{cor:hyp}}
\end{align*}
Thus $\theta \ge 1 - 3^{2kd} \sqrt{\delta}$, implying
\begin{equation}\label{eq:EQ-bound}
\EE_\QQ[f(\bY)^{4k}] = \frac{B^{4k}}{\theta} \le B^{4k}\left(1 - 3^{2kd} \sqrt{\delta}\right)^{-1}.
\end{equation}
Using the key variational property of the LDLR (Proposition~\ref{prop:lr-optimal-l2}),
$$\|L^{\le 2kd}\| \ge \frac{\EE_\PP[f(\bY)^{2k}]}{\sqrt{\EE_\QQ[f(\bY)^{4k}]}} \ge \left(\frac{A}{B}\right)^{2k}\sqrt{1 - 3^{2kd} \sqrt{\delta}} \ge \frac{1}{2} \left(\frac{A}{B}\right)^{2k}$$
since $\delta \le \frac{1}{2} \cdot 3^{-4kd}$.
\end{proof}

\subsection{Lower Bound Against Spectral Methods}
\label{app:spectral-hard}

The proof of Theorem~\ref{thm:spectral-hard} is similar to the above.

\begin{proof}[Proof of Theorem~\ref{thm:spectral-hard}]
Let $\{\lambda_i\}$ be the eigenvalues of $\bM$, with $|\lambda_1| \ge \cdots \ge |\lambda_L|$. Consider the degree-$2kd$ polynomial $f(\bY) \defeq \Tr(\bM^{2k}) = \sum_{i=1}^L \lambda_i^{2k}$. Using Jensen's inequality,
$$\EE_\PP[f(\bY)] = \EE_\PP \sum_{i=1}^L \lambda_i^{2k} \ge \EE_\PP[\lambda_1^{2k}] \ge \left(\EE_\PP|\lambda_1|\right)^{2k} \ge A^{2k}.$$
If $\EE_\QQ[f(\bY)^2] \le L^2 B^{4k}$, then \eqref{eq:EQ-bound-2} holds immediately. 
Otherwise, let $\theta \defeq L^2 B^{4k}/\EE_\QQ[f(\bY)^2] \le 1$. Then, we have
\begin{align*}
\delta &\ge \QQ(\|\bM\| \ge B)\\
&= \QQ(\lambda_1^{2k} \ge B^{2k})\\
&\ge \QQ(f(\bY) \ge LB^{2k})\\
&= \QQ(f(\bY)^2 \ge L^2 B^{4k})\\
&= \QQ(f(\bY)^2 \ge \theta \,\EE_\QQ[f(\bY)^2])\\
&\ge \frac{(1-\theta)^2}{3^{4kd}}. \tag{by Proposition~\ref{cor:hyp}}
\end{align*}
Thus $\theta \ge 1 - 3^{2kd} \sqrt{\delta}$, implying
\begin{equation}\label{eq:EQ-bound-2}
\EE_\QQ[f(\bY)^2] = \frac{L^2 B^{4k}}{\theta} \le L^2 B^{4k}\left(1 - 3^{2kd} \sqrt{\delta}\right)^{-1}.
\end{equation}
Using the key variational property of the LDLR (Proposition~\ref{prop:lr-optimal-l2}),
$$\|L^{\le 2kd}\| \ge \frac{\EE_\PP[f(\bY)]}{\sqrt{\EE_\QQ[f(\bY)^2]}} \ge \frac{1}{L}\left(\frac{A}{B}\right)^{2k}\sqrt{1 - 3^{2kd} \sqrt{\delta}} \ge \frac{1}{2L}\left(\frac{A}{B}\right)^{2k}$$
since $\delta \le \frac{1}{2} \cdot 3^{-4kd}$.
\end{proof}

\end{document}